\DeclareMathOperator{\var}{Var}
\def\to{\rightarrow}
\def\phi{\varphi}
\def\indic{{\mathbf{1}}}
\numberwithin{equation}{section}
\newtheorem{thm}{Theorem}
\newtheorem{cor}[thm]{Corollary}
\newtheorem{claim}[thm]{Claim}
\newtheorem{lemma}[thm]{Lemma}
\newtheorem{prop}[thm]{Proposition}
\newtheorem{conj}[thm]{Conjecture}
\theoremstyle{definition}
\newtheorem{remark}[thm]{Remark}
\newtheorem{dfn}[thm]{Definition}
\newcommand{\zz}[1]{\mathbb{#1}}
\newcommand{\Revesz}{R{\'e}v{\'e}sz}
\begin{document}

\begin{frontmatter}
\title{Occupation Statistics of Critical Branching Random Walks
 in Two or Higher Dimensions}
 \runtitle{BRW Occupation Statistics}

\author{\fnms{Steven P.} \snm{Lalley}\ead[label=e1]{lalley@galton.uchicago.edu}\thanksref{t1}}
\thankstext{t1}{Research partially supported by NSF grant DMS  - 0805755}
\address{Department of Statistics, \\The
University of Chicago, \\ Chicago, IL 60637\\ \printead{e1}}
\affiliation{University of Chicago}

\and

\author{\fnms{Xinghua} \snm{Zheng}\ead[label=e2]{xhzheng@ust.hk}  \thanksref{t2}}
\thankstext{t2}{Research partially supported by NSERC (Canada) and the Research Support from Dept of ISOM, HKUST}
\address{Department of Information Systems,\\
\quad Business Statistics and Operations Management \\
Hong Kong University of Science and Technology\\
Clear Water Bay, Kowloon, Hong Kong.\\
\printead{e2}}

\affiliation{Hong Kong University of Science and Technology}
\runauthor{Steven P. Lalley \and Xinghua Zheng}

\begin{abstract}
Consider a critical nearest neighbor branching random walk on the
$d$-dimensional integer lattice initiated by a single particle at
the origin.  Let $G_{n}$ be the event that the branching random walk
survives to generation $n$.  We obtain limit theorems conditional on
the event~$G_{n}$ for a variety of occupation statistics: (1) Let
$V_{n}$ be the maximal number of particles at a single site at time
$n$.  If the offspring distribution has finite $\alpha$th moment for
some integer $\alpha\geq 2$, then in dimensions 3 and higher,
$V_n=O_p(n^{1/\alpha})$; and if the offspring distribution has an
exponentially decaying tail, then $V_n=O_p(\log n)$ in dimensions~3
and higher, and $V_n=O_p((\log n)^2)$ in dimension~2. Furthermore,
if the offspring distribution is non-degenerate then $P(V_n\geq
\delta \log n \,|\, G_{n})\to 1$ for some $\delta >0$.  (2) Let
$M_{n} (j)$ be the number of multiplicity-$j$ sites in the $n$th
generation, that is, sites occupied by exactly $j$ particles. In
dimensions~3 and higher, the random variables $M_{n} (j)/n$ converge
jointly to multiples of an exponential random variable. (3) In
dimension $2$, the number of particles at a ``typical'' site (that
is, at the location of a randomly chosen particle of the $n$th
generation) is of order $O_p(\log n)$, and the number of occupied
sites is $O_p(n/\log n)$. We also show that in dimension 2 there is
particle clustering around a typical site.
\end{abstract}

\begin{keyword}[class=AMS]
\kwd[Primary ]{60J80} \kwd[; secondary ]{60G60} \kwd[; tertiary
]{60F05}
\end{keyword}

\begin{keyword}
\kwd{Critical branching random walks, limit theorems, occupation
statistics}
\end{keyword}

\end{frontmatter}

\section{Introduction} A \emph{nearest neighbor branching random walk}
is a discrete-time particle system on the integer lattice $\mathbb{Z}^d$ that
evolves according to the following rule: At each time
$n=0,1,2,\dotsc$, every particle generates a random number of
offspring, with offspring distribution $\mathcal{Q}=\{Q_l\}_{l\geq
0}$; each of these then moves to a site randomly chosen from among the
$2d+1$ sites at distance $\leq 1$ from the location of the
parent.\footnote{Allowing particles to remain at the same locations as
their parents with positive probability eliminates some annoying
periodicity problems that would require tedious, but routine,
arguments to circumvent.  Our main results could be proved under much
less restrictive hypotheses on the jump distribution.}  We
shall consider only the case where the branching random walk is
\emph{critical}, that is, where the mean number of offspring per
particle is $1$, and we shall assume throughout that the offspring
distribution has finite, positive variance $\sigma^{2}$.

By a well-known theorem of Kolmogorov (see \cite{athreya72},
\hbox{ch.}1) if the branching process is initiated by a single
particle, and if $G_{n}$ is the event that the process survives to
generation~$n$, then
\begin{equation}\label{eq:survivalToGenM}
   \pi_n:=    P (G_{n}) \sim \frac{2}{n\sigma^{2}}.
\end{equation}
Therefore, if the branching random walk is started with $n$
particles at time 0, then the number of initial particles whose
families survive to time $n$ follows, approximately for large $n$,
a Poisson distribution with mean $2/\sigma^2$, and the number of
particles $Z_n$ alive at time $n$ is of order $O_p(n)$.  In fact,
in this case, under suitable hypotheses on the initial
distribution of particles, the measure-valued process associated
with the branching random walk converges, after rescaling, to the
super-Brownian motion $X_{t}$ with variance parameter $\sigma^{2}$
(see \hbox{e.g.}, \cite{etheridge}). In dimensions 2 and higher,
the random measure $X_{t}$ is, for each $t>0$, almost surely
singular with respect to the Lebesgue measure on $\mathbb{R}^d$;
and  when $d \geq 3$, the measure $X_{t}$  spreads its mass over
the support in a fairly uniform manner (\cite{perkins88}), and in
fact can be recovered from its support (\cite{perkins89}). It is
natural to conjecture that this uniformity also holds, in a
suitable sense, for critical branching random walk, and that the
\emph{maximal} number of particles at a single site at time $n$
does not grow rapidly in $n$. Our main results show that this is
indeed the case.  For ease of exposition, we will state our
results as conditional limit theorems given the event $G_{n}$ of
survival to generation $n$. Corresponding unconditional results
for branching random walks started by $n$ particles could easily
be deduced.

 We shall assume throughout the paper, unless otherwise specified,
that the branching random walk is initiated by a single particle
located at the origin at time $0$.  Set
\begin{align}\label{eq:Vm}
             \mathcal{Z}_{n}:&=\text{set of particles in generation }  n;\\
\notag     Z_{n}:&=|\mathcal{Z}_{n}|=\text{number of particles in generation }  n;\\
\notag      U_n(x):&=\text{number of particles at site } x
            \text{ in generation} \; n;\\
\notag  \Omega_{n}:&= \text{number of occupied sites in generation }n;\\
\notag   M_{n} (j):&=\text{number of multiplicity-$j$ sites in
generation }  n; \;\text{and}\\
\notag       V_{n}:&=  \max_{x\in \zz{Z}^{d}} U_{n} (x).
\end{align}
(A \emph{multiplicity-}$j$ \emph{site} is a site with exactly $j$
particles.)

\begin{dfn}\label{definition:Op}
Let  $X_{n} $ be a sequence of random variables, $f (n)$ a sequence
of positive real numbers, and $H_{n}$  a sequence of events of
positive probability. Say that $X_{n} =O_{P} (f (n))$ given $H_{n}$
if the conditional distributions of $X_{n}/f (n)$ given $H_{n}$ are
tight. Similarly, say that $X_{n} =o_{P} (f (n))$ given~$H_{n}$ if
the conditional distributions of $X_{n}/f (n)$ given $H_{n}$
converge weakly to the point mass at $0$.
\end{dfn}

\begin{thm}\label{thm:moment}
Assume that the offspring distribution $\mathcal{Q}$ has  finite
$\alpha$th moment for some integer $\alpha\geq 2$, and that $d\geq
3$. Then conditional on $G_{n}$,
\begin{equation}\label{eq:moment}
    V_{n}=O_{P} (n^{1/\alpha}).
\end{equation}
In particular, if $\mathcal{Q}$ has finite moments of all orders,
then $V_n=o_p(n^\varepsilon)$ for all $\varepsilon >0.$
\end{thm}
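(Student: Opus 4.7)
The plan is to combine an $\alpha$th-moment Markov inequality with the Kolmogorov asymptotic $P (G_n) \sim 2/(n\sigma^{2})$ recorded in (\ref{eq:survivalToGenM}), and then to control the $\alpha$th occupation moment $I_\alpha (n) := E\bigl[\sum_{x} U_n (x)^\alpha \bigr]$ by a genealogical induction.  First I would observe that $V_n^\alpha = \max_x U_n(x)^\alpha \le \sum_x U_n(x)^\alpha$, so Markov's inequality gives $P (V_n \ge K n^{1/\alpha}) \le I_\alpha (n) / (K^\alpha n)$; since $\{V_n \ge 1\}\subseteq G_n$ and $P (G_n)\ge c/n$ for large $n$, this yields $P (V_n \ge K n^{1/\alpha} \mid G_n) \le C\, I_\alpha (n)/K^\alpha$.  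It therefore suffices to establish $I_\alpha (n) = O (1)$ uniformly in $n$, after which letting $K \to \infty$ gives the tightness~(\ref{eq:moment}).

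To bound $I_\alpha (n)$ I would condition on the first generation: the root has $N$ offspring at positions $W_1,\dots,W_N$ (i.i.d.\ of law $p_1 (0,\cdot)$) that root i.i.d.\ critical BRWs $\tilde U^{(i)}_{n-1}$ of length $n-1$.  Writing $U_n(x) = \sum_{i} \tilde U^{(i)}_{n-1}(x - W_i)$, expanding to the $\alpha$th power and grouping ordered tuples $(i_1,\dots,i_\alpha)$ by the set-partition $\pi$ of $\{1,\dots,\alpha\}$ they induce, one obtains
\[
I_\alpha (n) \;=\; \sum_{\pi \,\vdash\, \{1,\dots,\alpha\}} \mu^{(f)}_{|\pi|} \sum_{x} \prod_{B \in \pi} h_{|B|}(x),
\]
where $\mu^{(f)}_r := E[N (N-1)\cdots (N-r+1)]$ is finite for $r \le \alpha$ by hypothesis, and $h_k := p_1 * g_k$ with $g_k (y) := E [\tilde U_{n-1}(y)^k]$.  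The singleton partition contributes exactly $I_\alpha (n-1)$, since $\mu^{(f)}_1=1$.  Every other partition has $r \ge 2$ blocks, each of size strictly less than $\alpha$, and its contribution is bounded by $\|h_{|B_1|}\|_\infty \cdots \|h_{|B_{r-1}|}\|_\infty \cdot I_{|B_r|}(n-1)$.  The outer induction on $\alpha$ supplies $I_k (n-1) = O (1)$ for $k<\alpha$; an inner induction on $k$, obtained by applying the same partition expansion to $g_k$ itself and using local CLT estimates, supplies $\|g_k\|_\infty \le C_k\, n^{-d/2}$.  The inner step rests on the fact that for $d \ge 3$ every resulting integral, after invoking reversibility of the walk, reduces to ``one transit factor $p_n (0,x)$ times a product of return factors $p_{m_i}(0,0)$'', whose sums over generation labels converge because $\sum_m p_m(0,0) < \infty$.

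Putting the two ingredients together, each non-singleton term in the partition formula is $O (n^{-(r-1)d/2})$, which is summable in $n$ whenever $r \ge 2$ and $d \ge 3$ since then $(r-1)d/2 \ge d/2 > 1$.  Consequently $I_\alpha (n) - I_\alpha (n-1) = O (n^{-d/2})$ and $I_\alpha (n)$ remains bounded, completing the argument.  The main obstacle I anticipate is the inner estimate $\|g_k\|_\infty = O (n^{-d/2})$ paired with $\sum_x g_k (x) = O (1)$: these two together force the $k$-point occupation intensity to have essentially the same pointwise profile as the one-particle density $p_n (0,\cdot)$, and both fail when the walk is recurrent because the Green's sums $\sum_m p_m(0,0)$ diverge; this is exactly why the hypothesis $d \ge 3$ is needed.
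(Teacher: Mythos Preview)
Your approach is correct and shares the paper's overall architecture: reduce via Markov's inequality and Kolmogorov's estimate~\eqref{eq:survivalToGenM} to showing $\sup_n I_\alpha(n)<\infty$, then condition on the first generation and exploit transience of simple random walk in $d\ge 3$. The implementation differs, however. The paper avoids your nested double induction by replacing the exact set-partition expansion with a combinatorial inequality that bounds $(\sum_i b_i)^\alpha$ by sums over $k$-tuples of \emph{distinct} indices carrying the indicators $\indic\{b_{i_1}>0,\dots,b_{i_k}>0\}$; after H\"older, each indicator contributes a factor $P(U_{n-1}^{u_j}(x)>0)\le EU_{n-1}^{u_j}(x)\le C(n-1)^{-d/2}$, which yields directly a multiplicative recursion $I_\alpha(n)\le I_\alpha(n-1)\bigl(1+C(n-1)^{-d/2}\bigr)$ with no reference to $I_k$ or $\|g_k\|_\infty$ for $k<\alpha$. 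Your route is more work but, as you observe, delivers the stronger pointwise bound $EU_n(x)^k\le C_k P_n(x)$ as a byproduct --- information the paper does not extract at this stage, though the case $k=2$ reappears later in~\eqref{eq:2ndMomentBound}.

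One caution about your inner step: the claim $\|g_k\|_\infty\le C_k n^{-d/2}$ does \emph{not} follow from iterating the crude sup-norm recursion $\|g_k^{(n)}\|_\infty\le\|g_k^{(n-1)}\|_\infty+O(n^{-d})$, which only gives boundedness. You really must carry the sharper pointwise hypothesis $g_j^{(m)}(x)\le C_j P_m(x)$ through the induction, so that after iterating the linear part you can write $\sum_z P_{n-m}(x-z)P_m(z)^r\le P_m(0)^{r-1}P_n(x)$ via $P_m(z)\le P_m(0)$ and Chapman--Kolmogorov. This is exactly the ``one transit factor times return factors'' structure you describe, but it should be stated as the induction hypothesis rather than as a consequence.
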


\begin{thm}\label{thm:mgf}
Assume that the offspring distribution $\mathcal{Q}$ has an
exponentially decaying tail, that is, there exists $\delta >0$
such that $\sum_l Q_l \exp(\delta l)<~\infty.$ Then conditional
on~$G_n,$
\begin{align}\label{eq:mgf3}
    V_n&=O_p(\log n), \qquad\, \text{if} \; d\geq 3; \quad \\
\label{eq:mgf2}
    V_n&=O_p((\log n)^2),\quad \text{if} \; d=2.
\end{align}
\end{thm}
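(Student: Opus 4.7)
The strategy is to sharpen the moment estimates underlying Theorem~\ref{thm:moment} by tracking the dependence on $\alpha$, then to apply Markov's inequality at a moment order chosen to match the target scale of $k$.

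\emph{Step 1 (uniform moment bound).} I would first establish, for every integer $\alpha\ge 1$,
\begin{equation*}
    \sum_{x\in\bZ^d} E\bigl[U_n(x)^\alpha\bigr]\;\le\; C^\alpha\,\alpha!\,g(n)^{\alpha-1},
\end{equation*}
where $g(n)=1$ when $d\ge 3$ and $g(n)=\log n$ when $d=2$, and $C$ depends only on $\mathcal{Q}$ and $d$. This is obtained by expanding $E[U_n(x)^\alpha]$ as a sum over ordered $\alpha$-tuples of particles at $x$ in generation $n$ and grouping the tuples by their ancestral coalescence tree. Each internal (coalescence) vertex contributes a factorial moment of $\mathcal{Q}$ and each edge contributes a random walk transition probability. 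Summation over $x$ reduces the spatial factor to repeated sums of the form $\sum_{y}p_m(0,y)^2\le C/m^{d/2}$ over $m\in[1,n]$, each of which is $O(g(n))$. A coalescence tree with $\alpha$ leaves has at most $\alpha-1$ internal vertices, so this gives the factor $g(n)^{\alpha-1}$. The exponential-tail hypothesis on $\mathcal{Q}$ yields $E[\xi(\xi-1)\cdots(\xi-k+1)]\le C^k k!$ uniformly in $k$, which, after accounting for the enumeration of tree shapes, produces the $C^\alpha \alpha!$ factor.

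\emph{Step 2 (exponential tail for $V_n$).} For $\theta\in(0,1/(Cg(n)))$, the bound of Step~1 gives
\begin{equation*}
    \sum_x E\bigl[e^{\theta U_n(x)}-1\bigr]
    \;=\;\sum_{\alpha\ge 1}\frac{\theta^\alpha}{\alpha!}\sum_x E\bigl[U_n(x)^\alpha\bigr]
    \;\le\;\frac{C\theta}{1-C\theta\,g(n)}.
\end{equation*}
Since $(e^{\theta k}-1)P(U_n(x)\ge k)\le E[e^{\theta U_n(x)}-1]$, summation over $x$ and a union bound over sites yield $P(V_n\ge k)\le C'\,e^{-\theta k}$ for $\theta=1/(2Cg(n))$.

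\emph{Step 3 (conditioning on $G_n$).} By \eqref{eq:survivalToGenM}, $P(G_n)\sim 2/(\sigma^2 n)$, so
\begin{equation*}
    P(V_n\ge k\mid G_n)\;\le\; C''\,n\,e^{-k/(2Cg(n))}.
\end{equation*}
Choosing $k=A\,g(n)\log n$ with $A$ sufficiently large makes the right side vanish, which establishes $V_n=O_p(\log n)$ when $d\ge 3$ and $V_n=O_p((\log n)^2)$ when $d=2$.

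\emph{Main obstacle.} The crux of the argument is Step~1: the combinatorial enumeration of coalescence trees must be controlled so that the constants grow only geometrically in $\alpha$ on top of the factorial. The exponential-tail assumption on $\mathcal{Q}$ is essential here, since it is what allows all factorial moments of the offspring distribution to be controlled by $C^k k!$ simultaneously; under only finite $\alpha$-th moment assumptions (as in Theorem~\ref{thm:moment}), the moment series in Step~2 cannot be summed, and one is forced back to the polynomial bound $O_p(n^{1/\alpha})$.
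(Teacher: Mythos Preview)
Your proposal is correct in outline and would yield the theorem, but it follows a genuinely different route from the paper's proof. The paper works directly with the moment generating function $G_n(x;\theta)=E\exp(\theta U_n(x))-1$: conditioning on the first generation gives the nonlinear recursion $G_{n+1}(x)+1=\Phi(\zz{P}G_n(x)+1)$, where $\Phi$ is the offspring PGF. This is linearized by introducing dominating functions $H_n$ satisfying $H_{n+1}(x)=\zz{P}H_n(x)\,\Phi'(1+H_n(0))$, which can be iterated explicitly to give $H_n(x;\theta)=(2d+1)P_n(x)H_1(0;\theta)\prod_{j=1}^{n-1}\Phi'(1+H_j(0;\theta))$. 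The key input is a stochastic domination lemma ($U_n(0)$ dominates every $U_n(x)$), which justifies replacing $\zz{P}G_n(x)$ by $H_n(0)$ in the linearization. Control of the product then reduces to the summability (or logarithmic growth, in $d=2$) of the return probabilities $P_j(0)$.

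By contrast, you build the same exponential moment bound from below, bounding each $\sum_x EU_n(x)^\alpha$ by a coalescence-tree expansion and then summing in $\alpha$. Your approach makes the role of the exponential-tail hypothesis completely transparent (it is exactly what allows the factorial moments of $\mathcal{Q}$ to be summed geometrically) and connects Theorem~\ref{thm:mgf} directly to the moment method of Theorem~\ref{thm:moment}. The paper's approach is cleaner in that it sidesteps the tree combinatorics entirely and yields the sharper pointwise bound $E\exp(\theta U_n(x))-1\le CP_n(x)\theta$ rather than only the spatial sum; note also that the paper's Proposition~\ref{prop:moment} gives only $\sum_x EU_n(x)^\alpha\le C_1 n^{C_2 2^\alpha}$ in $d=2$, so your Step~1 requires a genuinely more careful argument than what the paper records for moments. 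The combinatorial control you flag as the ``main obstacle'' is real but standard; the generating-function route avoids it.
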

In fact (see Corollary \ref{cor:mgf} below) for sufficiently large
$C>0$ the conditional probabilities $P(V_n\geq C \log n|G_n)$ in
dimensions $d\geq 3$ and $P(V_n\geq C (\log n)^2|G_n)$ in dimension
$d=2$ decay polynomially in $n$.  For one-dimensional branching
random walk, it is known that $V_{n}$ is of order $\sqrt{n}$ (Theorem
7.10 in \cite{Revesz94}); stronger results are proved in
\cite{lalley07}.

\begin{thm}\label{thm:lu}
Assume that  $d\geq 2$. Then there exists $ \delta>0$, depending
on the offspring distribution~$\mathcal{Q}$, such that
\begin{equation}\label{eq:lu}
 \lim_{n \rightarrow \infty } P(V_n\geq \delta \log n \, | \,G_{n})= 1.
\end{equation}
\end{thm}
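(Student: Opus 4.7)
My plan is to use the Kesten size-biased representation of the Galton--Watson tree conditioned on survival. Under this coupling, conditional on $G_n$ the tree is well-approximated (up to vanishing total-variation error) by a tree with a distinguished ``spine'' $v_0,v_1,\ldots,v_n$, along which particles have the size-biased offspring law $\tilde Q_\ell=\ell Q_\ell$ (so that $\mathbb E[\xi^\ast]=1+\sigma^2>1$); one child continues the spine and the remaining $\xi^\ast-1$ off-spine children each initiate an independent unconditioned critical branching random walk. The spine's spatial positions $S^\ast_0,\ldots,S^\ast_n$ perform a simple random walk on $\mathbb Z^d$. I would target the random site $y:=S^\ast_n$ and decompose
\begin{equation*}
U_n(y)\;=\;1+\sum_{k=1}^n N_k,
\end{equation*}
where $N_k$ counts the generation-$n$ descendants of off-spine children of $v_{k-1}$ that happen to land at $y$.

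By the many-to-one formula together with the local CLT, $\mathbb E[N_k\mid\text{spine}]=(\xi^\ast_{k-1}-1)\,p_{n-k}(S^\ast_{k-1},S^\ast_n)$, and averaging over the spine walk gives $\mathbb E[N_k]\asymp (n-k)^{-d/2}$. In dimension $d=2$ this yields $\mathbb E[U_n(y)]\sim c\log n$. Because conditional on the spine the $N_k$ are sums of contributions from disjoint and therefore independent off-spine sub-trees, a second-moment calculation using the standard variance formula for critical branching random walks shows $\mathrm{Var}(U_n(y))=o((\log n)^2)$. Chebyshev then yields $V_n\ge U_n(y)\ge\delta\log n$ with probability tending to $1$ conditional on $G_n$, giving the conclusion in $d=2$.

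The main obstacle is the case $d\ge 3$, where the sum $\sum_k\mathbb E[N_k]$ is only $O(1)$, so the direct moment argument at $y=S^\ast_n$ fails. Here the plan is to exploit the abundance of long-lived off-spine sub-trees: along the first $n-\ell_n$ levels of the spine there are $\sim n$ off-spine children, of which, by~\eqref{eq:survivalToGenM} applied inside each sub-tree, of order $n/\ell_n$ survive to generation $n$ whenever $\ell_n\to\infty$. Choosing $\ell_n=\lfloor n^{1/2}\rfloor$ produces of order $n^{1/2}$ conditionally independent critical branching random walks of depth $\sim\ell_n$. A concentration argument applied to the maxima of these sub-trees then shows that some sub-tree attains maximum occupancy of order $\log\ell_n\sim\tfrac12\log n$, yielding $V_n\ge\delta\log n$ (after absorbing the factor $\tfrac12$ into the constant).

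The delicate step in this last paragraph is producing the per-sub-tree tail estimate needed to amplify the $n^{1/2}$ independent trials into a high-probability lower bound; one natural route is a recursive doubling construction that uses the positive offspring variance $\sigma^2$ and the positive laziness $1/(2d+1)$ of the spatial walk to build up, over $\Theta(\log n)$ generations, a cluster of $\delta\log n$ co-located particles with probability that is polynomial in $n$, and then to graft this cluster into one of the $n^{1/2}$ surviving sub-trees. Controlling the conditional probability of this construction while retaining a positive constant $\delta$ independent of $n$ is the principal technical difficulty.
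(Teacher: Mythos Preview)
Your $d=2$ argument contains a genuine error: the claimed variance bound $\mathrm{Var}(U_n(y))=o((\log n)^2)$ is false. The second-moment formula \eqref{eqn:un2nd} applied to the off-spine contributions gives, after averaging over the spine walk,
\[
  E\bigl[U_{j}^{j}(S_{j+1}+\xi_j)^2\bigr]\sim \frac{A^2\log j}{4j},
\]
and summing over $j\le n$ yields a conditional-variance contribution of order $(\log n)^2$; this is precisely the content of Lemma~\ref{lemma:DeltaN}, where $\mathrm{Var}(\Delta_n)\sim (A^2/8)(\log n)^2$. Since the variance is of the \emph{same} order as the square of the mean, Chebyshev (or the second-moment inequality in Lemma~\ref{lemma:meanVar}) only gives $P_H(U_n(y)\ge\delta\log n)\ge c>0$, not convergence to~$1$. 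Indeed, this is exactly why Theorem~\ref{thm:ty2d} in the paper asserts only a $\liminf$ bound for the typical site. There is also a subtler issue with your coupling claim: the tree conditioned on $G_n$ is \emph{not} within vanishing total-variation distance of the size-biased (Kesten) tree up to level $n$---their laws of $Z_n$ differ---so a change-of-measure argument via \eqref{eq:condRN} is needed, and that only transfers positive-probability statements, not probability-one statements.

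For $d\ge 3$ you correctly identify that the spine approach fails and fall back on a ``recursive doubling'' construction, which you acknowledge is the principal difficulty and do not carry out. In fact this construction is the \emph{entire} content of the paper's proof, and it works uniformly for all $d\ge 2$ with no need for the spine or for a separate $d=2$ argument. The paper simply observes that, by Lemmas~\ref{lemma:conditioning} and~\ref{lemma:timeJiggle}, one may replace $G_n$ by the event $\{Z_{n-k}\ge n\varepsilon_n\}$ with $k\asymp\log\log n$; then each of the $\ge n\varepsilon_n$ particles at time $n-k$ independently has probability at least $p^{l_0^k/(l_0-1)}\ge n^{-c\delta}$ of producing $l_0^k\ge\delta\log n$ offspring all at a single site in $k$ further steps (by iterating the event ``produce $l_0$ children, all of which stay put''). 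For $\delta$ small enough the exponent $c\delta<1$, so among $\sim n$ independent trials at least one succeeds with probability tending to~$1$. This is much shorter than your route and sidesteps both the variance obstruction and the dimension split.
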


Theorem \ref{thm:mgf} and Theorem \ref{thm:lu} imply that, in
dimensions 3 and higher, if the offspring distribution has an
exponentially decaying tail then $V_{n}$ is of order $\log n$ on
the event $G_{n}$ of survival to generation~$n$. In particular,
the (conditional) distributions of ${V_n}/{\log n}$ are tight, and
any weak limit has support contained in $[\delta_{1},\delta_{2}]$
for some $\delta_{1},\delta_{2}>0$ (cf.  Corollary \ref{cor:mgf}).
This partly settles an open question (Question 2, p.79) raised in
\cite{Revesz96}.

\begin{thm}\label{thm:occ3d}
Assume that $d\geq 3$. Then conditional on the event $G_{n}$, the
joint distribution of the occupation statistics $M_{n} (j)/n$
converges as $n \rightarrow \infty$. In particular, for certain
constants $\kappa_{j}$ such that $\sum_{j=1}^{\infty}j\cdot
\kappa_{j}=1$,
\begin{equation}\label{eq:occStats}
    \mathcal{L}\left(\left. \frac{Z_{n}}{n},
    \left\{\frac{M_n(j)}{n}\right\}_{j\geq 1},\frac{\Omega_n}{n}\, \right| \,G_{n}\right)
    \Longrightarrow
    \left(1,\{ \kappa _j\}_{j\geq 1},\sum_j \kappa _j\right)\cdot Y
\end{equation}
where $Y$ is exponentially distributed with mean $2/\sigma^{2}$.
\end{thm}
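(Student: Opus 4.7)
The plan is to split \eqref{eq:occStats} into two ingredients. First, the classical Yaglom theorem for the critical Galton--Watson process, combined with \eqref{eq:survivalToGenM}, yields
\begin{equation*}
  \mathcal{L}\!\left(Z_n/n\,\big|\,G_n\right)\Longrightarrow Y.
\end{equation*}
Second, one aims at a conditional law of large numbers
\begin{equation*}
  \frac{M_n(j)}{Z_n}\longrightarrow \kappa_j\quad\text{in probability on } G_n,
\end{equation*}
jointly in $j\ge 1$. Combined with the Yaglom limit via Slutsky's lemma, this gives $(Z_n/n,\,M_n(j)/n)\Rightarrow (Y,\,\kappa_j Y)$ jointly; the convergence $\Omega_n/n\to\big(\sum_j\kappa_j\big)Y$ then follows from the automatic tail bound $\sum_{j>J}M_n(j)/Z_n\le 1/J$, which uses only $\sum_j jM_n(j)=Z_n$ and also forces $\sum_j\kappa_j\le 1$.

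To identify $\kappa_j$, I would use a tagged-particle / spine decomposition. Sample a particle $P_n$ uniformly from $\sZ_n$ (conditional on $G_n$), and let $J_n:=U_n(X_{P_n})$ denote the multiplicity at its location. The elementary identity
\begin{equation*}
  P(J_n=j\,|\,\sZ_n)=jM_n(j)/Z_n
\end{equation*}
means that it suffices to prove $J_n\Rightarrow J_\infty$ on $G_n$ with $P(J_\infty=j)=:p_j$, and then to set $\kappa_j:=p_j/j$; the normalization $\sum_j j\kappa_j=\sum_j p_j=1$ then holds automatically. Under the size-biasing induced by the random choice of $P_n$, the ancestry of $P_n$ becomes a random-walk spine along which, at every generation, a random number of i.i.d.\ critical Galton--Watson sibling subtrees are grafted. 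Then $J_n$ equals $1$ plus the total number of particles from these subtrees that occupy $X_{P_n}$ at time $n$. A subtree grafted $k$ generations above $P_n$ has its root at spatial displacement of order $\sqrt k$ from $X_{P_n}$ and, by $E[Z_k]=1$ and the local central limit theorem, contributes in expectation at most $O(k^{-d/2})$ particles to $X_{P_n}$. Since $d\ge 3$, the series $\sum_k k^{-d/2}$ is summable, so these contributions combine, by the conditional independence of the sibling subtrees given the spine, into a well-defined limit law $\{p_j\}$.

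The main obstacle is the concentration step: upgrading convergence of means into convergence in probability. The natural route is a second-moment estimate showing
\begin{equation*}
  \mathrm{Var}\!\left(M_n(j)-\kappa_j Z_n\,\big|\,G_n\right)=o(n^2),
\end{equation*}
which amounts to proving asymptotic decorrelation of the occupation indicators $\indic\{U_n(x)=j\}$ and $\indic\{U_n(y)=j\}$ at pairs $x,y$ whose separation is of order $\sqrt n$. In $d\ge 3$ the subtrees descending from distinct moderately early ancestors occupy essentially disjoint regions of $\bZ^d$, so these indicators factorize asymptotically. The delicate regime is $|x-y|\ll\sqrt n$, where subtrees can overlap; there one would decompose at an intermediate generation $n-m$ with $m\to\infty$ suitably (say $m\asymp\log n$) and exploit the transience of the displacement walk, together with the convergence of the BRW to super-Brownian motion, to bound the common-ancestor contribution to the covariance.
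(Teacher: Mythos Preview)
Your architecture---Yaglom for $Z_n/n$ plus a conditional law of large numbers $M_n(j)/Z_n\to\kappa_j$, glued by Slutsky, with the tail bound $\sum_{j>J}M_n(j)\le Z_n/J$ handling $\Omega_n$---is exactly the skeleton the paper uses. Your spine/size-bias identification of $\kappa_j$ is a genuinely different (and attractive) route to what the paper does by hand: the paper shows directly that $EM_n(j)$ is Cauchy via the first-generation decomposition $M_{n+1}(j)=\sum_{i\le Z_1}M_n^{i}(j)+O(\text{overlap})$ and the bound $E[\text{overlap}]=O(n^{-d/2})$. Your spine picture reinterprets $jEM_n(j)$ as $P_H(J_n=j)$ and argues convergence from the summability of the $k^{-d/2}$ contributions along the spine; both arguments rest on the same transience fact, and yours gives $\sum_j j\kappa_j=1$ for free.

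The concentration step, however, is where your proposal has a real gap. You want $\var(M_n(j)-\kappa_j Z_n\mid G_n)=o(n^2)$ and propose to get it by analysing covariances of the indicators $\indic\{U_n(x)=j\}$, falling back on a decomposition at generation $n-m$ with $m\asymp\log n$ in the short-range regime. Two problems. First, conditioning on $G_n$ destroys the independence structure, so a direct covariance computation under $P(\cdot\mid G_n)$ is awkward; you would at least need to pass to conditioning on $\sF_{m}$ for some intermediate $m$. Second, and more concretely, with $m\asymp\log n$ the crude overlap bound $E[\text{overlap}\mid\sF_{n-m}]\le CZ_{n-m}^2/m^{d/2}$ is of order $n^2/(\log n)^{d/2}$, which swamps $M_n(j)=O_P(n)$; a sharper overlap estimate would require knowing the local structure of the BRW at time $n-m$, which is essentially what you are trying to prove. (Invoking convergence to super-Brownian motion here is both overkill and uninformative at the lattice scale.) The paper avoids all of this with a much simpler device: decompose at generation $m(n)$ with $m(n)/n\to 1$ but $n-m(n)>n^{2/(d-\varepsilon)}$, so that the crude overlap bound is $o_P(n)$, and then use only the trivial estimate $\var(M_k(j))\le EZ_k^2=1+k\sigma^2$ on the conditionally i.i.d.\ summands $M^u_{n-m}(j)$, giving $\var\big(Z_m^{-1}\sum_u M^u_{n-m}(j)\,\big|\,\sF_m\big)\le(1+(n-m)\sigma^2)/Z_m\to 0$ on $\{Z_m\ge n\varepsilon_n\}$. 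The conditioning event $G_n$ is swapped for $H_n=\{Z_{m(n)}\ge n\varepsilon_n\}$ via a short Kolmogorov/Yaglom argument. If you replace your concentration sketch with this intermediate-generation decomposition at the right scale, your proof goes through.
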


This extends the classical theorem of Yaglom, according to  which  the
conditional distribution  of $Z_{n}/n$, given that the branching
process survives to generation  $n$, converges to the exponential law
with mean $2/\sigma^{2}$. See \cite{athreya72}, ch.~1 for a discussion
of Yaglom's theorem and related results; and \cite{Geiger00} for an
interesting probabilistic proof.

Theorem \ref{thm:occ3d} implies that in dimensions 3 and higher,
most occupied sites are occupied by only $O (1)$ particles.
Ultimately, this is a consequence of the transience of random walk
in dimensions $d\geq 3$. Since random walk in dimension $d=2$ is
recurrent, different behavior should be expected for the
occupation statistics of branching random walk. In the following
theorem and throughout this article, we shall use the term
\emph{typical particle} to mean a particle chosen randomly from
the $n$th generation $\mathcal{Z}_{n}$ of the branching process
(with the choice made independently of the evolution of the
branching random walk up to time $n$, according to the uniform
distribution on $\mathcal{Z}_{n}$). By a \emph{typical site} we
mean the location of a typical particle.

\begin{thm}\label{thm:ty2d} In dimension $d=2$, the number $T_n$ of
particles at a typical site at time $n$ is, conditional on the
event $G_{n}$, of order $O_p(\log n)$. Moreover, for some
sufficiently small $\varepsilon >0$ there exists $\delta >0$ such
that
\begin{equation}\label{eq:OrderLogn}
    \liminf_{n \rightarrow \infty}
     P (T_{n}\geq \varepsilon \log n \,|\, G_{n})\geq \delta .
\end{equation}
\end{thm}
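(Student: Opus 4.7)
The plan is to analyze the conditional law of $T_n$ given $G_n$ by moment methods. Since the typical particle is drawn uniformly from $\mathcal{Z}_n$, conditioning on the $\sigma$-field $\mathcal{F}_n$ generated by the branching random walk through generation $n$ yields
\begin{equation*}
E[T_n \mid \mathcal{F}_n] = \frac{1}{Z_n}\sum_{x} U_n(x)^{2}, \q E[T_n^{2}\mid \mathcal{F}_n] = \frac{1}{Z_n}\sum_{x} U_n(x)^{3}.
\end{equation*}
Once one has $E[T_n\mid G_n] \asymp \log n$ and $E[T_n^{2}\mid G_n] = O((\log n)^{2})$, Markov's inequality applied to the first moment gives $T_n = O_P(\log n)$, and the Paley--Zygmund inequality
\begin{equation*}
P\bigl(T_n \geq \tfrac{1}{2}E[T_n\mid G_n] \,\big|\, G_n\bigr) \;\geq\; \tfrac{1}{4}\cdot\frac{E[T_n\mid G_n]^{2}}{E[T_n^{2}\mid G_n]} \;\geq\; \delta > 0
\end{equation*}
yields \eqref{eq:OrderLogn} with $\varepsilon = c/2$.

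The first moment is supplied by the many-to-two formula for critical branching random walk. A pair of distinct generation-$n$ particles at a common site whose most recent common ancestor lies in generation $m$ contributes $\sigma^{2}$ (the factorial second moment of $\mathcal{Q}$) multiplied by the probability that two lineages of length $n-m-1$ starting from near-neighbor locations end at a common site; by Chapman--Kolmogorov this is comparable to the return probability $p^{(2(n-m))}(0,0)$ of the underlying walk. In dimension $2$, $p^{(2k)}(0,0) \asymp 1/k$, so
\begin{equation*}
E\Bigl[\sum_{x} U_n(x)^{2}\Bigr] \;=\; 1 + \sigma^{2}\sum_{m=0}^{n-1}\Theta(1/(n-m)) \;\asymp\; \log n.
\end{equation*}
By Yaglom's theorem together with the identity $E[Z_n^{2}] = 1 + \sigma^{2} n$, the random variable $Z_n/n$ under $G_n$ converges in distribution to an exponential of mean $2/\sigma^{2}$ and is bounded in $L^{2}$. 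Hence for any $\eta > 0$ one can fix $0 < \varepsilon < M < \infty$ with $P(\varepsilon n \leq Z_n \leq Mn\mid G_n) \geq 1-\eta$ for all large $n$. On this event the factor $1/Z_n$ costs only constants, and since $P(G_n) \sim 2/(\sigma^{2}n)$ this gives $c \log n \leq E[T_n\mid G_n] \leq C \log n$ in both directions.

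For the second moment I propose the size-biased spine ($Q$-process) decomposition. Under this decomposition the typical particle's ancestral line $(w_0, \ldots, w_n)$ is (asymptotically) a mean-zero random walk, and the other particles at $w_n$ come from independent critical sub-BRWs hanging off spine vertices. Writing
\begin{equation*}
T_n \;=\; 1 + \sum_{k=0}^{n-1}\sum_{i=1}^{N_k} W_{k,i},
\end{equation*}
where $N_k$ is the number of off-spine siblings of the spine's child at generation $k+1$ and $W_{k,i}$ is the count at site $w_n$ at time $n$ of the $i$-th corresponding sub-BRW, the $W_{k,i}$'s are independent given the spine and the sibling counts $(N_k)$. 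A direct many-to-two computation inside a single critical branching random walk of length $m$ started from one particle shows $E[U_m(w)^{2}] \asymp (\log m)/m$ in $d = 2$ for $|w|\lesssim\sqrt{m}$, so applying the law of total variance and summing contributions of order $(\log(n-k))/(n-k)$ over $k = 0, \ldots, n-1$ produces $\mathrm{Var}(T_n\mid\mathrm{spine}) = O((\log n)^{2})$; combined with $E[T_n\mid\mathrm{spine}] = O(\log n)$ this yields $E^{Q}[T_n^{2}] = O((\log n)^{2})$, and the asymptotic agreement of the $Q$-process law with the conditioning on $G_n$ transfers the bound to $E[T_n^{2}\mid G_n] = O((\log n)^{2})$.

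The main technical obstacle lies in this second-moment bound: the fluctuation of the sibling count $N_k$ at each spine vertex contributes a term of order $\mathrm{Var}(N_k)/(n-k)^{2}$, and $\mathrm{Var}(N_k)$ is finite only when $\mathcal{Q}$ has a finite third moment. Under only the standing hypothesis of finite second moment, one first truncates the offspring law at a large level $L$ (so the truncated BRW satisfies the third-moment condition), applies the above argument, and controls the discarded contribution by a coupling; the $(\log n)^{2}$ bound survives to leading order.
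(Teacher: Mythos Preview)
Your approach is essentially the paper's: size-biased spine decomposition, first and second moments of $T_n$, then Paley--Zygmund for the lower tail.  The paper carries out exactly this program under the size-biased measure $P_H$ (your ``$Q$-process''), proving $E_H T_n \sim (A/2)\log n$ and $\mathrm{Var}_H(T_n)\asymp (\log n)^2$ via the representation $T_n^{**}=1+B_0+\sum_j U^{j-1}_{j-1}(S_j+\xi_{j-1})$ and the split $\Gamma_n+\Delta_n$.  Your second-moment sketch is the same computation phrased as ``law of total variance given the spine.''

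The gap is that you mix measures.  You compute the first moment under $P(\cdot\,|\,G_n)$ and the second under $P_H$, then apply Paley--Zygmund under $P(\cdot\,|\,G_n)$.  Two transfers are unjustified:
\begin{itemize}
\item Your lower bound $E[T_n\mid G_n]\geq c\log n$ does not follow from the high-probability event $\{\varepsilon n\leq Z_n\leq Mn\}$.  The random variable $\sum_x U_n(x)^2$ is correlated with $Z_n$; in particular $\sum_x U_n(x)^2\leq Z_n^2$, and $E[Z_n^2\indic_{\{Z_n>Mn\}}\mid G_n]$ can be of order $n^2$, which swamps the $n\log n$ you need.  So you cannot simply say ``$1/Z_n$ costs only constants.''
\item The transfer of the second-moment bound from $P_H$ to $P(\cdot\,|\,G_n)$ is not automatic: $dP(\cdot\,|\,G_n)/dP_H=1/(Z_n\pi_n)$ is unbounded, so moment bounds do not pass through.
\end{itemize}

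The fix is exactly what the paper does: stay under $P_H$ throughout.  There $E_H[T_n]=E\sum_x U_n(x)^2\asymp\log n$ is your many-to-two computation verbatim, and your spine variance gives $E_H[T_n^2]=O((\log n)^2)$.  Paley--Zygmund and Markov then yield $P_H(T_n\geq\varepsilon\log n)\geq\delta$ and $P_H(T_n\geq K\log n)\leq\varepsilon$.  Finally transfer these \emph{probability} statements (not moments) to $P(\cdot\,|\,G_n)$ using only that $dP_H/dP(\cdot\,|\,G_n)=Z_n\pi_n$ converges in law (by Yaglom) to a nondegenerate limit---this is the paper's Lemma~\ref{lemma:reduction}.
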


We conjecture that the conditional distributions of $T_{n}/\log n$
given $G_{n}$ converge in distribution as $n \rightarrow \infty$.
Fleischman \cite{fleischman} has used the method of moments to
establish a related result for the number of particles at a
\emph{fixed} site at distance $O (1)$ from the origin.
Unfortunately, calculation of higher moments for the number of
particles at a \emph{typical} site appears to be considerably more
difficult, and so the method of moments does not seem to be a
feasible approach to the conjecture.

 By Yaglom's theorem, conditional on the event of survival to
generation~$n$ there are $ O_{P} (n)$ particles in all.
Theorem~\ref{thm:ty2d} implies that at least a fraction~$\delta $ of
these are located at sites with (roughly) $ \log n$ other particles.
Thus, a substantial fraction of the particles fall in just
$O_{P}(n/\log n)$ sites.  This does not logically rule out the
possibility that many more sites are occupied; however, it does
suggest that the number $\Omega_{n}$ of occupied sites is of order
$o_{p} (n)$.  This is consistent with the corresponding result for
super-Brownian motion $X_{t}$, which states that for any $t>0$, the
random measure $X_{t}$ is almost surely singular. Following is a
sharp result about the number of occupied sites.

\begin{thm}\label{thm:occ2d}  For
two-dimensional nearest neighbor branching random walk, the number
$\Omega_n$ of occupied sites is $O_p(n/ {\log n })$ given the
event $G_{n}$.
\end{thm}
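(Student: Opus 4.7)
The plan is to bound $E[\Omega_n\mid G_n]$ directly and then invoke Markov's inequality. Since every occupied site at time $n$ forces the event of survival, $\{U_n(x)\ge 1\}\subseteq G_n$ for each $x$, and therefore
\begin{equation*}
E[\Omega_n\mid G_n]=\frac{1}{\pi_n}\sum_{x\in\bZ^2}P(U_n(x)\ge 1).
\end{equation*}
Combined with $\pi_n\sim 2/(n\sigma^2)$ from \eqref{eq:survivalToGenM}, matters reduce to showing the unconditional global bound $\sum_x P(U_n(x)\ge 1)=O(1/\log n)$.

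Write $p_n(0,x):=E[U_n(x)]$, which equals the $n$-step transition kernel of the underlying nearest-neighbor random walk, and use the elementary identity $p_n(0,x)=P(U_n(x)\ge 1)\cdot E[U_n(x)\mid U_n(x)\ge 1]$. Since $\sum_x p_n(0,x)=1$, the global bound follows at once from the pointwise local estimate
\begin{equation*}
E[U_n(x)\mid U_n(x)\ge 1]\ge c\log n\qquad\text{whenever }p_n(0,x)>0.
\end{equation*}
This is a uniform-in-$x$ strengthening of Fleischman's density theorem for a fixed site; once in hand, $E[\Omega_n\mid G_n]=O(n/\log n)$ and Markov's inequality conclude $\Omega_n=O_p(n/\log n)$ given $G_n$.

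To produce the logarithmic conditional mean I would use a spine decomposition. Condition on there being a particle at $x$ at time $n$ and follow its ancestral path $(S_m)_{m=0}^{n}$, a random walk bridge from $0$ to $x$. Along the spine the mother at each generation has size-biased offspring distribution (mean $1+\sigma^2$); the non-spine children seed independent critical branching random walks rooted near $S_m$. Their expected contribution to $U_n(x)$ is
\begin{equation*}
\sigma^2\sum_{m=0}^{n-1}E\bigl[p_{n-m-1}(S_m,x)\bigm|S_n=x\bigr]\sim c\,\sigma^2\log n,
\end{equation*}
because in dimension two each bridge summand is of order $1/(n-m)$ for typical $m$ and the resulting sum is harmonic.

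The main obstacle is that a spine/size-biasing argument most naturally produces the \emph{size-biased} mean $E[U_n(x)^2]/p_n(0,x)$, which Cauchy--Schwarz shows is at least the \emph{conditional} mean $E[U_n(x)\mid U_n(x)\ge 1]$ — the inequality in the wrong direction for us. Passing from one to the other requires controlling the full conditional distribution of $U_n(x)$ on $\{U_n(x)\ge 1\}$, ruling out a rare-large-value pathology in which a few anomalously large realizations dominate the second moment. I would follow Fleischman's method-of-moments calculations but carry them out uniformly for $|x|\le C\sqrt{n}$ (for larger $x$ the factor $p_n(0,x)$ is already small enough that the weaker trivial bound $P(U_n(x)\ge 1)\le p_n(0,x)$ suffices when summed), establishing tightness of the conditional distribution of $U_n(x)/\log n$ given $U_n(x)\ge 1$ with a non-degenerate limit of positive mean. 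This uniform local density lemma is where I expect almost all of the difficulty to lie; once it is in place, the first-moment plus Markov argument above delivers Theorem~\ref{thm:occ2d}.
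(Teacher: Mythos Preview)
Your reduction is exactly the paper's: $E\Omega_n=\sum_x u_n(x)$ with $u_n(x):=P(U_n(x)\geq 1)$, and by Kolmogorov's estimate it suffices to show $\sum_x u_n(x)=O(1/\log n)$. Your reformulation as a uniform lower bound $E[U_n(x)\mid U_n(x)\geq 1]\geq c\log n$ is equivalent, since $E[U_n(x)]=P_n(x)$.

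From there the approaches diverge completely. The paper does \emph{not} attack the conditional-mean bound probabilistically. Instead it observes that $u_n$ satisfies a discrete KPP-type equation $u_{n+1}(x)=\bP u_n(x)-\tfrac12(\bP u_n(x))^2$ (binary fission), proves a comparison principle, and exhibits an explicit supersolution $v_n(x)=\frac{\kappa}{n\log n}\exp\bigl(-\beta_n|x|^2/(2n)\bigr)$ with $\beta_n=\tfrac52(1-1/\log n)$. Verifying the supersolution inequality is a long but elementary Taylor-expansion exercise; once done, the Gaussian-shaped pointwise bound $u_n(x)\leq C_1(n\log n)^{-1}e^{-C_2|x|^2/n}$ sums to $C/\log n$.

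The gap in your route is real and sits exactly where you flag it. The paper in fact works out the correct spine representation under $P(\,\cdot\mid U_n(x)\geq 1)$: the ancestral path is \emph{not} a random-walk bridge but a $u$-transformed walk with transitions $q_m(z,y)\propto P_1(y-z)\,u_{n-m}(x-y)$, and the attachment probabilities along the spine also depend on $u$. (This representation is developed precisely to explain why R\'ev\'esz's argument---which does assume a bridge---is flawed.) Consequently, computing the conditional mean from the spine requires a priori control of $u_n$ itself, a circularity the PDE approach sidesteps. Your fallback of running Fleischman's moment calculations uniformly over $|x|\leq C\sqrt{n}$ hits the same wall: all conditional moments carry $u_n(x)$ in the denominator, and Fleischman's analysis at fixed $x$ already uses the asymptotics of $u_n(x)$ there. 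One can try to break the loop by working under the size-biased measure (where the spine \emph{is} a bridge), proving $U_n(x)/\log n\Rightarrow Y>0$ there, and then recovering $u_n(x)=P_n(x)\,E_{P_H}[1/U_n(x)]$; but this requires uniform integrability of $\log n/U_n(x)$ under $P_H$, i.e.\ a lower-tail estimate you have not supplied. The supersolution argument avoids all of this.
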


Theorem \ref{thm:ty2d} implies that the number of occupied sites
must be of order \emph{at least} $n/\log n$. Combining this with
Theorem~\ref{thm:occ2d} we see that $n/\log n$ is the true
asymptotic rate. Revesz \cite{Revesz96} (Theorem~3 (ii)) asserts
that a corresponding result is true for branching Brownian motion,
but we believe that his proof has a serious gap. See
section~\S\ref{ssec:representationUn} for a detailed discussion.

The next theorem partially quantifies the degree of particle
clustering around a typical site.

\begin{thm}\label{thm:ball2d} Assume that $d=2$. Let $\{\ell_n\}$ be
any sequence of real numbers such that $\lim_n\ell_n=\infty$ and
$\lim_n \log \ell_n/\!\log n\!=\!0$. Let $S_{n}$ be the location of a
typical particle, and let $B (S_{n};\ell_n)$ be the ball of radius
$\ell_n$ centered at $S_{n}$. Then conditional on $G_{n}$,
\begin{enumerate}
\item [(A)] the number of \emph{unoccupied} sites in $B
(S_{n};\ell_n)$ is $o_{P} (\ell_n^2)$, and
\item [(B)] the number of particles in $B (S_{n};\ell_n)$ is of order
$O_p(\log n\cdot \ell_n^2)$.
\end{enumerate}
\end{thm}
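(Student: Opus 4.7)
The strategy for both parts is based on the ``many-to-two'' identity for critical BRW: for $f:\zz{Z}^2\times\zz{Z}^2\to\zz{R}_+$,
\[
\mathbb{E}\!\left[\sum_{\substack{P_1,P_2\in\sZ_n\\ P_1\neq P_2}}\! f(X_{P_1},X_{P_2})\right]=\sigma^2\sum_{k=1}^n\mathbb{E}\!\left[f(S_n^{(1)},S_n^{(2)})\right],
\]
obtained by splitting ordered pairs of generation-$n$ particles by the generation of their most recent common ancestor, with $S^{(1)}_n,S^{(2)}_n$ two random walks agreeing for the first $n-k$ steps and independent thereafter. For $f(x,y)=\mathbf 1\{|x-y|\leq\ell_n\}$, translation invariance reduces the right side to $\sigma^2\sum_{k=1}^nP(|\bar S_{2k}|\leq\ell_n)$ with $\bar S_m=S^{(1)}_m-S^{(2)}_m$, and the 2D local central limit theorem gives $P(|\bar S_{2k}|\leq\ell_n)=\Theta(\min(1,\ell_n^2/k))$, so the total is $O(\ell_n^2\log n)$.

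\textbf{Part (B).} Write $X$ for the number of particles in $B(S_n;\ell_n)$ and observe that $Z_n\cdot X=\sum_{P_1,P_2\in\sZ_n}\mathbf 1\{|X_{P_1}-X_{P_2}|\leq\ell_n\}$ (including the diagonal contribution $Z_n$). Hence the identity above gives $\mathbb{E}[XZ_n\mathbf 1_{G_n}]=1+O(\ell_n^2\log n)$, and dividing by $P(G_n)\sim2/(n\sigma^2)$ yields $\mathbb{E}[XZ_n\mid G_n]=O(n\ell_n^2\log n)$. Tightness of $X/(\ell_n^2\log n)$ given $G_n$ then follows by Yaglom's theorem ($Z_n/n\Rightarrow\mathrm{Exp}(\sigma^2/2)$ under $P(\cdot\mid G_n)$, so $P(Z_n<cn\mid G_n)\to 1-e^{-c\sigma^2/2}$) combined with a Markov bound:
\[
P(X\geq K\ell_n^2\log n\mid G_n)\leq P(Z_n<cn\mid G_n)+\frac{\mathbb{E}[XZ_n\mid G_n]}{Kcn\ell_n^2\log n}\leq(1-e^{-c\sigma^2/2})+\frac{C}{Kc},
\]
which can be made arbitrarily small by first sending $c\to 0$ and then $K\to\infty$.

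\textbf{Part (A).} Let $W_n$ be the number of unoccupied sites in $B(S_n;\ell_n)$; the target is $W_n=o_p(\ell_n^2)$. Size-biasing the typical particle yields
\[
\mathbb{E}[W_nZ_n\mathbf 1_{G_n}]=\sum_y\mathbb{E}\!\left[\mathbf 1\{U_n(y)=0\}\,U_n(B(y;\ell_n))\,\mathbf 1_{G_n}\right].
\]
A naive bound $\mathbb{E}[\cdots]\leq\mathbb{E}[U_n(B(y;\ell_n))]$ summed over $y$ gives only $\mathbb{E}[W_nZ_n\mathbf 1_{G_n}]=O(\ell_n^2)$, which is off by exactly a factor of $\log n$. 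To recover this $\log n$, I would show that, conditional on a particle within $\ell_n$ of $y$ and on $G_n$, the site $y$ itself is occupied with probability $1-O(1/\log n)$. This is a Paley--Zygmund-type statement: the conditional first moment $\mathbb{E}[U_n(y)\mid\text{nearby particle},G_n]$ is of order $\log n$ (essentially the content of Theorem~\ref{thm:ty2d}), and the conditional second moment is $O((\log n)^2)$, proved via a \emph{many-to-three} identity that tracks the ancestral topology of three particles. The same $Z_n$-truncation as in part (B) then converts the resulting bound $\mathbb{E}[W_nZ_n\mid G_n]=O(n\ell_n^2/\log n)$ into $W_n=o_p(\ell_n^2)$.

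The principal difficulty is the many-to-three estimate underlying part (A): one must classify the ancestry of three generation-$n$ particles (``star'' at a single common ancestor, or ``caterpillar'' with two nested splits) and combine the resulting random-walk Green's-function sums to extract the sharp $O((\log n)^2)$ bound on the conditional second moment of $U_n(y)$ given a nearby particle. Without this refinement, the Paley--Zygmund lower bound on occupation probability is too weak to beat the trivial $O(\ell_n^2)$ for $\mathbb{E}[W_n\mid G_n]$.
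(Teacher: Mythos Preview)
Your Part~(B) is correct and is essentially the paper's argument in different clothing: $E[XZ_n]$ is exactly the size-biased expectation $E_H[X]$, and the many-to-two sum $\sigma^2\sum_k P(|\bar S_{2k}|\leq\ell_n)$ is the same first-moment computation the paper carries out via the spine decomposition. (One small slip: your order of limits is backward --- with $c\to 0$ first, the term $C/(Kc)$ blows up. You mean: given $\varepsilon$, fix $c$ small so the Yaglom term is $<\varepsilon/2$, \emph{then} take $K$ large.)

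Part~(A) has a genuine gap. Paley--Zygmund with conditional first moment $\asymp\log n$ and conditional second moment $O((\log n)^2)$ gives only
\[
P\bigl(U_n(y)>0 \,\big|\, \text{nearby particle}\bigr)\;\geq\;\frac{(E[U_n(y)\mid\cdot])^2}{E[U_n(y)^2\mid\cdot]}\;\geq\; c
\]
for some fixed $c\in(0,1)$ --- not $1-O(1/\log n)$, and not even $1-o(1)$. Nor can a sharper many-to-three save this: the paper's own variance computation for the number of particles at a typical site gives $\var(T_n)\sim(A^2/8)(\log n)^2$, genuinely comparable to the mean squared, so the ratio of moments does not tend to~$1$. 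With only $P(U_n(y)=0\mid\cdot)\leq 1-c$ you get $E[W_nZ_n\mid G_n]\leq(1-c)\,Cn\ell_n^2$, which is the trivial bound and does not give $W_n=o_p(\ell_n^2)$.

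What the paper exploits for~(A), and what a single second-moment application cannot see, is the \emph{product structure} coming from the spine decomposition. Under the size-biased measure, the site $S_n+x$ is vacant only if \emph{every one} of the $\sim n$ independent branching random walks hung off the spine misses it, so
\[
P_H\{S_n+x\text{ vacant}\}\;=\;E\prod_{i}\bigl(1-u_i(x+S_{i+1}+\xi_i)\bigr),
\]
where $u_i(y)=P(U_i(y)\geq 1)$. A Paley--Zygmund bound applied to each factor separately gives $u_i(y)\geq P_i(y)/(C+A\log i)$; on the ``good'' indices this is $\gtrsim 1/(i\log i)$, and since $\sum_{i\leq n}1/(i\log i)\to\infty$ the product tends to~$0$. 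The point is that $n$ independent Paley--Zygmund applications followed by a product is strictly stronger than one application to the aggregate: your many-to-three route collapses exactly this product structure and cannot recover the divergent sum.
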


Theorems~\ref{thm:moment} and \ref{thm:mgf} are proved in section
\S\ref{sec:thm_12}, Theorem~\ref{thm:lu} in section
\S\ref{sec:vn_lu}, and Theorem~\ref{thm:occ3d} in section
\S\ref{sec:occ3d}.  Theorem \ref{thm:ty2d} is proved in section
\S\ref{sec:ty2d}, Theorem~\ref{thm:ball2d} in
section~\S\ref{sec:clustering}, and Theorem \ref{thm:occ2d} in
section \S\ref{sec:occ2d}. For each of the last three theorems the
calculations required for the proofs are considerably simpler in
the special case of \emph{binary fission}, where the offspring
distribution $\mathcal{Q}$ is \emph{double-or-nothing} -- that is,
$Q_{0}=Q_{2}=1/2$. In the interest of clarity, we shall give
complete arguments only for this special case.  These arguments
(as should be evident) can be extended to the general case of mean
$1$, finite variance offspring distributions.

Fundamental to many of our arguments is the following elementary
relation between the expected number of particles at a site $x$ in
generation $n$ and the $n-$step transition probabilities $P_{n} (x)$
of the simple random walk:
\begin{equation}\label{eq:fundamental}
    EU_{n} (x)= P_{n} (x).
\end{equation}
This is easily proved by induction on $n$, by conditioning on the
first generation of the branching random walk.  Here and throughout
the paper, the term \emph{simple random walk} is used for the
symmetric nearest neighbor random walk on the lattice $\zz{Z}^{d}$
{with holding probability} $1/ (2d+1)$ --- that is, each increment
is uniformly distributed on the set $\mathcal{N}$ of $2d+1$ sites at
distance~$\leq 1$ from the origin --- and the notation $P_{n} (x)$
is reserved for the probability that a simple random walk started at
the origin finds its way to site $x$ in $n$ steps. We use the
notation $\zz{P}^{n}$ to denote the $n-$step transition probability
kernel of simple random walk, that is, the $n$th iterate of the
Markov operator $\zz{P}:\ell^{\infty} (\zz{Z}^{d}) \rightarrow
\ell^{\infty} (\zz{Z}^{d})$ associated with the random walk.

\medskip \noindent \textbf{Notation.}  Following is a list of
notation, in addition to that already established in
equations~\eqref{eq:survivalToGenM}, (\ref{eq:Vm}) and
\eqref{eq:fundamental} above, that will be fixed throughout the paper:

\begin{itemize}
 \item $\mathcal{N}=\{e_j\}_{-d\leq j\leq d} $ is the set of
sites at distance $0$ or $1$ from the origin  in $\mathbb{Z}^d$.
\item $\mathcal{Q}=\{Q_{l}\}_{l\geq 0}$ is the offspring
distribution, and $\mathcal{Q}^i=\{Q^{i}_{l}\}$ its $i$th
convolution power.
 \item $\mathcal{F}_{n}$ is the $\sigma -$algebra generated
by the random variables $\{U_{m} (x) \}_{x\in \zz{Z}^{d}, m\leq
n}$.
 \item $A=5/(4\pi)$ is the  constant such that
$P_n(0)\sim A/n$ in dimension~2, see, e.g., P7.9 on Page~75
in~\cite{spitzer76} .
\end{itemize}

In addition, we will follow the custom of writing $f\sim g$ to
mean that the ratio $f/g$ converges to 1, and $f\asymp g$ to mean
that the ratio $f/g$ remains bounded away from $0$ and $\infty$.
Throughout the paper, $C,C_1, C'$ \hbox{etc.} denote generic
constants whose values may change from line to line. Finally, we
use a ``local scoping rule'' for notation: Any notation introduced
in a proof is local to the proof, unless otherwise indicated.

\section{Proofs of Theorem \ref{thm:moment} and
\ref{thm:mgf}}\label{sec:thm_12}
\subsection{The case where the offspring distribution has finite
moments} The proof of Theorem~\ref{thm:moment} will rely on the
following estimates for  the moments of the occupation statistics
$U_{n} (x)$.

\begin{prop}\label{prop:moment}Suppose that the offspring distribution
$\mathcal{Q}$ has finite $\alpha$th moment for some integer
$\alpha\geq 2$.
\begin{itemize}
 \item[(i)] If $d\geq 3,$ then
$$
  \sup_n\sum_x EU_n(x)^\alpha<\infty.
$$
 \item[(ii)] If $d=2,$ then there exist $C_1, C_2<\infty$  such that for all
$n$,
$$
  \sum_x EU_n(x)^\alpha\leq C_1 n^{C_22^\alpha}.
$$
\end{itemize}
\end{prop}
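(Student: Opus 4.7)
The plan is to prove both parts by induction on $\alpha\ge 2$, using a one-step branching recursion for $m_\alpha(n,x):=EU_n(x)^\alpha$. Conditioning on the first generation writes $U_n(x)=\sum_{i=1}^{L}U_{n-1}^{(i)}(x-Y_i)$, with $L\sim\mathcal{Q}$, the $Y_i$ i.i.d.\ uniform on $\mathcal{N}$, and the $U_{n-1}^{(i)}$ i.i.d.\ copies of $U_{n-1}$. Expanding the $\alpha$th power via the multinomial theorem, separating the ``diagonal'' partition (which contributes exactly $\mathbb{P}\,m_\alpha(n-1,\cdot)(x)$ since $EL=1$), and taking expectations yields
$$m_\alpha(n,x)=\mathbb{P}\bigl(m_\alpha(n-1,\cdot)\bigr)(x)+R_\alpha(n-1,x),$$
where $R_\alpha$ is a finite linear combination, indexed by partitions of $\{1,\dots,\alpha\}$ into $s\ge 2$ nonempty blocks of sizes $k_1,\dots,k_s$, of terms of the form $E_{Y_1,\dots,Y_s}\prod_{i=1}^{s}m_{k_i}(n-1,x-Y_i)$. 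The coefficients are factorial moments of $\mathcal{Q}$ up to order $\alpha$, all finite by hypothesis. Crucially every $k_i<\alpha$, so $R_\alpha$ involves only strictly lower-order moments.

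Iterating and summing over $x$, using that the Markov kernel $\mathbb{P}$ preserves total mass, gives $M_\alpha(n)=1+\sum_{j=0}^{n-1}\Sigma_\alpha(j)$, where $\Sigma_\alpha(j):=\sum_x R_\alpha(j,x)$. For the base case $\alpha=2$ only the block structure $(k_1,k_2)=(1,1)$ appears, so $\Sigma_2(j)=\sigma^2\sum_x P_j(x)^2=\sigma^2 P_{2j}(0)$; the local CLT estimate $P_{2j}(0)\asymp j^{-d/2}$ then yields $\sup_n M_2(n)<\infty$ in $d\ge 3$ and $M_2(n)\asymp\log n$ in $d=2$, matching both claims at $\alpha=2$. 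For the inductive step I would propagate simultaneously $L^1$ and $L^\infty$ bounds on $m_k(n,\cdot)$ for $k<\alpha$: the $L^\infty$ bound is obtained from the same iterated recursion combined with the pointwise estimate $P_j(x)\le Cj^{-d/2}$ and the induction hypothesis at orders~$<k$.

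Each cross summand of $\Sigma_\alpha(j)$ can then be controlled, after shifting $Y_1$ to the origin, by a product $\|m_{k_{i_0}}(j,\cdot)\|_1\prod_{i\neq i_0}\|m_{k_i}(j,\cdot)\|_\infty$, with $i_0$ chosen to minimize the product (typically placing any $k_i=1$ factor in $L^\infty$ to exploit its $O(j^{-d/2})$ decay). In $d\ge 3$, for partitions that contain an $m_1$ factor this immediately yields summable-in-$j$ bounds; for partitions with no $m_1$ factor (e.g.\ the $(2,2)$ split when $\alpha=4$) one instead uses Cauchy--Schwarz or Young's inequality together with the finer representation of $m_k$ in terms of the bounded random-walk Green function to recover summability, closing the induction with $\sup_n M_\alpha(n)<\infty$. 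In $d=2$ the same scheme yields polynomial-in-$n$ bounds whose exponent roughly doubles at each inductive step, producing the stated $C_2\cdot 2^\alpha$ upper bound. The main obstacle is the combinatorial and analytic bookkeeping of the cross partitions together with the simultaneous propagation of the $L^1$, $L^\infty$, and occasionally $L^2$ bounds on $m_k$; once these estimates are in place, every $\sum_x$-type quantity reduces, via $L^1$--$L^\infty$ or $L^2$--$L^2$ H\"older, to a sum of local-CLT-controlled quantities such as $P_{2j}(0)$.
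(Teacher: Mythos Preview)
Your approach is correct in outline but takes a genuinely different route from the paper's. The paper does \emph{not} induct on $\alpha$; instead it proves a one-step \emph{multiplicative} recursion
\[
\sum_x EU_n(x)^\alpha \;\le\; \Bigl(1+\sum_{k=2}^\alpha \sum_l Q_l\tbinom{l}{k}k^\alpha\bigl(Cn^{-d/2}\bigr)^{k-1}\Bigr)\sum_x EU_{n-1}(x)^\alpha,
\]
obtained by writing $(\sum_i b_i)^\alpha\le \sum_{k=2}^\alpha\sum_{\mathcal P_k}(\sum_\ell b_{i_\ell}\indic_{\{b_{i_1}>0,\dots,b_{i_k}>0\}})^\alpha$, applying H\"older, and then using the Markov-type bound $P(U_{n-1}^{u_j}(x)>0)\le EU_{n-1}^{u_j}(x)=P_{n-1}(x-x_{u_j})\le Cn^{-d/2}$. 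The product over $n$ then converges for $d\ge 3$ and is $\le n^{C_2 2^\alpha}$ for $d=2$. Your scheme is instead the classical additive moment recursion with induction on the moment order. Both work; the paper's is shorter and treats all $\alpha$ at once, while yours is more systematic and, if carried out carefully in $d=2$, actually yields polylogarithmic rather than polynomial bounds (so your ``exponent roughly doubles'' remark undersells what your method gives).

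One comment on your hedge about the $(2,2)$ split: it is unnecessary once you propagate the \emph{right} $L^\infty$ bound. A clean way is to carry the pointwise inequality $m_k(n,x)\le C_k P_n(x)$ through the induction (in $d\ge 3$): from $m_{k_i}(j,\cdot)\le C_{k_i}P_j(\cdot)$ you get $R_k(j,x)\le C\,P_{j+1}(x)^s\le C\,P_{j+1}(0)^{s-1}P_{j+1}(x)$, and applying $\mathbb P^{n-1-j}$ and summing over $j$ uses only $\sum_j P_j(0)<\infty$. This immediately gives $\|m_k(n,\cdot)\|_\infty\le C_k n^{-d/2}$, so the plain $L^1$--$L^\infty$ bound handles every partition with $s\ge 2$, including $(2,2)$; no separate Cauchy--Schwarz or Green-function argument is needed.
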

\begin{proof}
We will use the following inequality: For all $l\geq 2$ and all
$b_i\geq 0,$
\begin{equation}\label{eqn:sum_moment}
\aligned
  \left(\sum_{i=1}^l b_i\right)^{\!\alpha}
   &\leq \sum_{k=2}^\alpha\sum_{\mathcal{P}_k}\left(\sum_{\ell=1}^k b_{i_\ell}\cdot
    \indic_{\{b_{i_1}>0,\ldots,b_{i_k}>0\}}\right)^\alpha,
\endaligned
\end{equation}
where $\mathcal{P}_k$ is the set of $k-$tuples $(i_1,\ldots,i_k)$ of
distinct positive integers no greater than $l$.
Inequality~\eqref{eqn:sum_moment} is obviously true for $l\leq
\alpha$. To see that it holds for $l>\alpha$, observe that, by the
multinomial expansion, the left side of \eqref{eqn:sum_moment} is a
sum of terms of the form $t={\alpha\choose j_1\;j_2\ldots j_l}
b_1^{j_1}b_2^{j_2}\ldots b_l^{j_l}$, where the exponents~$j_{i}$ sum
to $\alpha$. Since at most  $ \alpha$ of these can be positive, and
$t$ vanishes if any of $b_{i}$ with exponent $j_i>0$ is zero, the
term $t$ is included in the sum on the right side of
\eqref{eqn:sum_moment}.

Next, by the H\"{o}lder inequality, for each integer $k\geq 2$ and
all real numbers $b_i\geq 0 $,
\begin{equation} \label{eqn:holder}
   \left(\sum_{i=1}^k b_i\right)^{\!\alpha}\leq
   k^{\alpha-1}\sum_{i=1}^k b_i^\alpha.
\end{equation}
This implies that if $k$  independent branching random walks are
started by particles $u_1,\ldots, u_k$ located at sites
$x_1,\ldots,x_k$ respectively, and if $U_n^{u_i}(x)$ is the number
of the $n$th generation descendants at site $x$ of the particle
$u_i$, then
\begin{equation}\label{eqn:moment_indc}
\aligned
   &\sum_x E\left(\sum_{i=1}^k U_n^{u_i}(x)\cdot
          \indic_{\{U_n^{u_1}(x)>0,\ldots,
         U_n^{u_k}(x)>0\}}\right)^{\!\alpha}\\
   \leq& k^{\alpha-1}\sum_{i=1}^k\sum_x E (U_n^{u_i}(x))^\alpha\cdot
          \prod_{j\neq
         i}P(U_n^{u_j}(x)>0)\\
   \leq& k^\alpha\sum_x EU_n(x)^\alpha\cdot \left(C\frac{1}{\sqrt{n}^d}\right)^{\!k-1}.
\endaligned
\end{equation}
Here we have used (\ref{eqn:holder}) in the first inequality; the
second inequality follows by the local central limit theorem and the
elementary observation that
$$
   P(U_n^{u_j}(x)>0)\leq EU_n^{u_j}(x)=P_n(x-x_j).
$$

We are now prepared to estimate $\sum_x EU_{n}(x)^\alpha$.
Conditioning on the first generation,  we obtain
$$\aligned
   &\sum_x EU_{n}(x)^\alpha\\
   \leq& \sum_x EU_{n-1}(x)^\alpha
   +\sum_{k=2}^\alpha\sum_x
    E\left[\sum_{\mathcal{P}_k}\left(\sum_{j=1}^k U_{n-1}^{u_j}(x) \cdot
      \indic_{\{U_{n-1}^{u_1}(x)>0,\ldots,
    U_{n-1}^{u_k}(x)>0\}}\right)^{\!\alpha}\right]\\
  \leq& \sum_x EU_{n-1}(x)^\alpha\cdot\left(1+ \sum_{k=2}^\alpha\sum_l Q_l{l\choose k} k^{\alpha}\cdot
       \left(C\frac{1}{\sqrt{n-1}^d}\right)^{\!k-1}\right),
\endaligned$$
where $\mathcal{P}_k$ denotes the set of $k-$tuples $(
u_1,\ldots,u_k)$ of distinct particles in generation~1, and the
first and second inequality hold by \eqref{eqn:sum_moment}
and \eqref{eqn:moment_indc} respectively. Therefore, for all~$n,$
$$
    \sum_x EU_{n}(x)^\alpha
    \leq \prod_{i=2}^n \left(1+ \sum_{k=2}^\alpha\sum_l Q_l{l\choose k}
         k^{\alpha}\cdot \left(C\frac{1}{\sqrt{i-1}^d}\right)^{\!k-1}\right)\cdot
         \sum_x  EU_{1}(x)^\alpha.
$$
Clearly, $\sum_x  EU_{1}(x)^\alpha\leq (2d+1)EZ_1^\alpha<\infty$.
Furthermore,  in dimensions $d\geq 3$,
$$\aligned
   &\prod_{i=2}^n \left(1+ \sum_{k=2}^\alpha\sum_l Q_l{l\choose k}
         k^{\alpha}\cdot \left(C\frac{1}{\sqrt{i-1}^d}\right)^{\!k-1}\right)\\
   \leq& \exp\left(\sum_{i=2}^\infty \sum_{k=2}^\alpha\sum_l
      Q_l{l\choose k} k^{\alpha}\cdot
      \left(C\frac{1}{\sqrt{i-1}^d}\right)^{\!k-1}\right)\\
   =&\exp\left(C'\sum_{k=2}^\alpha\sum_l Q_l{l\choose k} k^{\alpha}\right),
\endaligned$$
where $C'<\infty$ is  independent of $n$; and  in dimension
$d=2$,
$$\aligned
   &\prod_{i=2}^n \left(1+ \sum_{k=2}^\alpha\sum_l Q_l{l\choose k}
         k^{\alpha}\cdot
         \left(\frac{C}{i-1}\right)^{\!k-1}\right)\\
   \leq& \exp\left(C\sum_l Q_l{l\choose 2} 2^{\alpha} \cdot
     \sum_{i=2}^n \frac{1}{i-1} +C\sum_{k=3}^\alpha\sum_l Q_l{l\choose k} k^{\alpha}\cdot
     \sum_{i=2}^\infty\left(\frac{1}{i-1}\right)^{ k-1}\right)\\
   \leq& \exp(C_22^\alpha\log n + C_3),
\endaligned$$
where $C_2$ is a constant independent of both $\alpha$ and $n$,
and $C_3$ is a constant independent of~$n$.
\end{proof}

\begin{proof}[Proof of Theorem \ref{thm:moment} ] By Kolmogorov's
estimate \eqref{eq:survivalToGenM}, the probability that the process
survives to time $n$ is $O(1/n)$.  By the Markov inequality,
\[
    P\{V_{n}\geq Cn^{1/\alpha} \} \leq
    C^{-\alpha}n^{-1}EV_{n}^{\alpha}
    \leq C^{-\alpha} n^{-1}E\sum_{x} U_{n} (x)^{\alpha},
\]
and so the relation \eqref{eq:moment} follows from Proposition
\ref{prop:moment}.
\end{proof}

\begin{remark}
Yaglom's limit theorem implies that, conditional on the event
$G_{n}$, the number of particles at time $n-1$ is $O_p(n)$. For
each of these, there is a small chance that the number of
offspring will exceed $(2d+1) n^{1/ (\alpha +\varepsilon )}$, in
which case $V_{n}$ will be at least $n^{1/ (\alpha +\varepsilon
)}$. If the tail of the offspring distribution decays like
$m^{-(\alpha+\varepsilon)}$ as $m\to\infty$, then the chance that
one of the $O_{p} (n)$ particles in generation $n-1$ will have
more than $(2d+1) n^{1/ (\alpha +\varepsilon )}$ offspring is of
order one. Thus, the result in Theorem \ref{thm:moment} is almost
optimal. (This answers a question of Michael Stein.)
\end{remark}

\subsection{The case where the offspring distribution has an
exponentially decaying
tail}\label{ssec:exponentialTail}

We begin with a stochastic comparison result for the random variables
$U_{n} (x)$.  First, observe that the law of the branching random walk
(started by a single particle located at the origin) is invariant with
respect to reflections in the coordinate axes, and so $U_{n}
(x)\stackrel{\mathcal{D}}{=}U_{n} (x')$ for any two sites $x,x'$ at
corresponding positions of different orthants. Now
define the usual partial order on the positive orthant
$\mathbb{Z}^d_{+}$:
\[
    x \preceq y \quad \text{if} \quad x_{i}\leq y_{i} \;
    \text{for all} \;1\leq i\leq d.
\]

\begin{lemma}\label{lemma:um0}
If $x\preceq y$ then $U_{n} (x)$ stochastically dominates
$U_{n}(y)$; in particular, $U_{n} (y)$ is stochastically dominated
by $U_{n} (0)$ for every $y\in \zz{Z}^{d}$. Consequently, if
$x\preceq y$, then for every $n\geq 0$,
\begin{align}\label{eq:comparisonA}
    P_{n} (x)&\geq P_{n} (y) \quad \text{and}\\
\label{eq:comparisonB}
    u_{n} (x)&\geq u_{n} (y),
\end{align}
where $u_{n} (x):=P\{U_{n} (x)\geq 1 \}$ is the \emph{hitting
probability function} of the branching random walk.
\end{lemma}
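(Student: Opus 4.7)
The plan is to reduce the general statement to a positive-orthant version via reflection symmetry and then establish the distributional comparison by induction on $n$, working at the level of probability generating functions. The inequalities \eqref{eq:comparisonA} and \eqref{eq:comparisonB} will follow immediately from the distributional inequality via expectation and via evaluating the survival function at the level $k=1$, respectively. The reduction of $U_n(y)\preceq U_n(0)$ for arbitrary $y\in\mathbb{Z}^d$ to the positive-orthant case rests on the observation that applying the reflection $R_i$ in the $i$-th coordinate axis to every random-walk increment is a measure-preserving transformation (the uniform law on $\mathcal{N}$ is $R_i$-invariant) and sends $U_n(z)$ to $U_n(R_i(z))$, whence $U_n(y)\stackrel{d}{=}U_n(|y|)$ componentwise; in the same way, the comparison $U_n(a)\succeq U_n(b)$ for general $a,b\in\mathbb{Z}^d$ with $|a_i|\le|b_i|$ for every $i$ reduces to the positive-orthant case.

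For the induction I would introduce the probability generating function $G_n(x,s):=E[s^{U_n(x)}]$, $s\in[0,1]$, and prove by induction on $n$ the strengthened statement that $G_n(a,s)\le G_n(b,s)$ whenever $|a_i|\le|b_i|$ for every coordinate $i$. Conditioning on the first-generation offspring count $L$ (with distribution $\mathcal{Q}$) and on the i.i.d.\ positions $z_1,\ldots,z_L$ uniform on $\mathcal{N}$, independence of the sub-branching-random-walks rooted at the first-generation particles gives the recursion
\[
  G_n(x,s)\;=\;\varphi\!\left(\frac{1}{|\mathcal{N}|}\sum_{z\in\mathcal{N}} G_{n-1}(x-z,s)\right),
\]
where $\varphi(t)=E[t^L]$ is non-decreasing on $[0,1]$. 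Hence it suffices to prove the sum inequality $\sum_{z\in\mathcal{N}}G_{n-1}(x-z,s)\le\sum_{z\in\mathcal{N}}G_{n-1}(y-z,s)$, and by chaining unit increments we may further assume $y=x+e_j$ with $x_j\ge 0$.

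The sign analysis of the $2d+1$ differences $G_{n-1}(y-z,s)-G_{n-1}(x-z,s)$ proceeds term by term. For every $z\in\mathcal{N}\setminus\{e_j\}$ the componentwise comparison $|x-z|\preceq|y-z|$ is straightforward to check, so the induction hypothesis yields a non-negative contribution. The only obstruction is the case $z=e_j$ when $x_j=0$: then the $j$-th coordinate of $x-e_j$ is $-1$ whereas that of $y-e_j=x$ is $0$, so $|x-z|\succeq|y-z|$ and the induction hypothesis returns the wrong sign. The resolution is the reflection identity $G_{n-1}(x-e_j,s)=G_{n-1}(x+e_j,s)$, valid precisely when $x_j=0$: the $z=e_j$ contribution then equals $G_{n-1}(x,s)-G_{n-1}(x+e_j,s)$, which is the exact negative of the $z=0$ contribution $G_{n-1}(x+e_j,s)-G_{n-1}(x,s)$. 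These paired terms cancel, every remaining term is non-negative, and the inductive step closes.

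The main obstacle in this plan is precisely the cancellation between the $z=0$ and $z=e_j$ terms in the coordinate that has collapsed to zero; this is the place where the induction would fail without the reflection identity, and it is what forces the strengthened hypothesis to be stated in terms of componentwise absolute values rather than only in the positive orthant. The base case $n=0$ is immediate since $U_0(a)=\mathbf{1}\{a=0\}$. Once the generating-function inequality $G_n(x,s)\le G_n(y,s)$ has been established for every $s\in[0,1]$, the hitting-probability inequality $u_n(x)\ge u_n(y)$ follows from $G_n(\cdot,0)=1-u_n(\cdot)$, the expectation inequality $P_n(x)\ge P_n(y)$ follows from the left-derivative identity $G_n'(\cdot,1^-)=P_n(\cdot)$, and the first-order stochastic dominance $U_n(x)\succeq U_n(y)$ is obtained by running the same reflection-and-cancellation analysis inductively at the level of the tail-probability generating series in place of $G_n$.
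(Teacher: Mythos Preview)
Your approach is genuinely different from the paper's. The paper builds an explicit three-color coupling: particles are colored red, blue, or green, with blue and green particles created in mirror-image pairs across the hyperplane $\{z_1=x_1+1/2\}$, so that the red$+$blue and red$+$green populations are each distributed as the original branching random walk and one reads off $U_n(x)\geq_{\mathcal D} U_n(y)$ pointwise from the coupling. You instead induct on $n$ through the recursion $G_n(x,s)=\varphi\bigl(\zz{P}G_{n-1}(\cdot,s)(x)\bigr)$, using reflection symmetry to cancel the single bad neighbor when a coordinate hits zero. Your route is lighter to write down and already yields \eqref{eq:comparisonA}--\eqref{eq:comparisonB} (and, by the same induction with $s>1$, the moment-generating-function comparison that is what is actually used in Proposition~\ref{prop:mgf}); the paper's coupling buys the full stochastic order in one stroke and makes the comparison pathwise rather than merely distributional.

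There is, however, a gap in your last step. The inequality $G_n(x,s)\le G_n(y,s)$ for all $s\in[0,1]$ does \emph{not} imply $U_n(x)\geq_{\mathcal D} U_n(y)$: take $X\equiv 2$ and $Y$ uniform on $\{1,3\}$, so that $Es^X=s^2\le(s+s^3)/2=Es^Y$ for every $s\ge 0$, yet neither variable stochastically dominates the other. Your proposed remedy---rerunning the induction for a ``tail-probability generating series''---does not work as written, because there is no analogue of the identity $G_n=\varphi(\zz{P}G_{n-1})$ for tail probabilities; that recursion rests on the multiplicativity $s^{a+b}=s^a s^b$. The correct fix is to run your cancellation argument directly at the level of the \emph{laws}. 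Writing $\overline{\mu}_{n-1}^{\,x}:=|\mathcal N|^{-1}\sum_{z\in\mathcal N}\mu_{n-1}^{x-z}$ for the one-step-averaged law, your pairing of $z=0$ with $z=e_j$ (when $x_j=0$) shows that these two terms contribute the \emph{same} mixture component on both sides, while the remaining terms are ordered by the induction hypothesis; hence $\overline{\mu}_{n-1}^{\,x}\geq_{\mathcal D}\overline{\mu}_{n-1}^{\,y}$, and then $\mu_n^x=\sum_l Q_l\bigl(\overline{\mu}_{n-1}^{\,x}\bigr)^{*l}\geq_{\mathcal D}\mu_n^y$ because stochastic order is preserved under convolution powers and under mixtures. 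With this correction your inductive scheme does prove the lemma in full.
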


\begin{remark}\label{remark:reflection}
The relation \eqref{eq:comparisonA}, which follows from the stochastic
dominance $U_{n} (x)\geq_{\mathcal{D}}U_{n} (y)$ by taking
expectations (recall the fundamental relation \eqref{eq:fundamental}),
also follows more directly by the reflection principle for simple
random walk.
\end{remark}

\begin{proof} Because the law of the branching random walk is
invariant with respect to permutations of the coordinates, we may
assume, without loss of generality,  that $y=x+e_{1}$, where $e_{1}=
(1,0,\dotsc ,0)$.
Denote
by $L$ and $L'$ the hyperplanes
\begin{align*}
    L&=\{z\in \zz{R}^{d}\, : \,z_{1}=x_{1}\} \quad \text{and}\\
    L'&=\{ z\in \zz{R}^{d}\, : \,z_{1}=x_{1}+1/2\};
\end{align*}
observe that $y$ is the reflection of $x$ in $L'$.  We shall define a
particle system with particles of three colors --- red, blue, and
green --- in such a way that
\begin{itemize}
   \item [(a)] the subpopulation of all red and blue particles
         follows the law of the branching random walk  started by one (red)
       particle at the origin;
   \item [(b)] the subpopulation of all red and green particles follows the same law;
   \item [(c)]  there are no red particles to the right of the hyperplane $L'$; \quad and
   \item [(d)] at each time, the green and blue particles are
   paired (bijectively) in such a way that the green and blue particles in
   any pair are at symmetric locations on opposite sides of the hyperplane~$L'$.
\end{itemize}
This will prove that $U_{n} (x)\geq_{\mathcal{D}}U_{n} (y)$ for
each $n$, by the following reasoning: First, the distribution of
$U_{n} (x)$ coincides with the distribution of the total number of
\emph{red} and \emph{blue} particles at location $x$ and time $n$,
by (a). Second, the number of \emph{blue} particles at $x$ equals
the number of \emph{green} particles at~$y$, by~(d), since $x$ and
$y$ are at symmetric locations on opposite sides of the
hyperplane~$L'$. Third, the number of \emph{green} particles
at~$y$ has the same distribution as $U_{n} (y)$,  by  (b) and (c).

The particle system is constructed as follows.  To start, color the
initial particle at the origin red. Offspring of blue and green
particles will always have the same color as their parents, and each
blue particle $b$ will always be paired with a green particle $g$
located at the mirror image (relative to reflection in the
hyperplane $L'$) of the site of $b$. Offspring of red particles will
be red except possibly when the parent red particle is located at a
site on the hyperplane $L$. In this case --- say, for definiteness,
that the red parent particle~$\xi$ is at site $z\in L$ --- each
offspring particle $\zeta$ first makes a jump according to the law
of the nearest neighbor random walk, and then chooses a color as
follows: (a) If the jump is to a site $z'\not =z$ to the \emph{left}
of hyperplane~$L'$ then $\zeta $ becomes \emph{red}; and (b) If the
jump is either to the same site $z$ as the parent or to its mirror
image $z^{*}$ on the \emph{right} of $L'$ then $\zeta $ chooses
randomly between blue and green. In case (b) the offspring
particle~$\zeta$ generates a \emph{doppelganger} (mirror particle)
$\zeta '$ of the opposite color at the reflected site on the other
side of~$L'$. Note the distribution of the position of $\zeta$ is
the same as that of $\zeta'$. The particle~$\zeta$ generates an
offspring branching random walk $\mathcal{G}_{\zeta} $ with all
particles having the same color as $\zeta $; the mirror image
$\mathcal{G}_{\zeta '}$ of $\mathcal{G}_{\zeta}$ relative to $L'$
(with particles colored oppositely) is attached to $\zeta '$. Note
that $\mathcal{G}_{\zeta '}$ is itself a branching random walk
started at the location of $\zeta '$, by the symmetry of the nearest
neighbor random walk.

Properties (a)--(d) above are now readily apparent. Property (c)
holds because, by construction, children of red particles on $L$
that jump across $L'$ are either green or blue, and offspring of
blue and green particles are either blue or green. Property~(d) is
inherent in the construction. Finally, (a) and (b) follow from the
blue/green symmetry of the reproduction law for red particles
located at sites on~$L$.
\end{proof}

\begin{prop} \label{prop:mgf} Assume that the offspring distribution
$\mathcal{Q}$ has finite moment generating function in some
neighborhood of the origin. Then in dimensions $d\geq 3,$ there
exist $\delta_d>0$ and $C>0$ such that for any $\theta \in
[0,\delta_d]$, all $x\in\zz{Z}^d$ and all $n\geq 1$,
\begin{equation}\label{eq:mgfD3}
    E \exp \{\theta U_{n} (x) \}-1 \leq CP_{n} (x)\theta .
\end{equation}
In dimension $d=2$, there exist $\delta_2>0$ and  $C>0$ such that
for any $\theta \in [0,\delta_2]$, all $x\in\zz{Z}^2$ and all
$n\geq 1$,
\begin{equation} \label{eqn:2d_mgf}
         E \exp \{\theta U_{n} (x) /\log n\}-1 \leq CP_{n} (x)\theta /\log n.
\end{equation}
\end{prop}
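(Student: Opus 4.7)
\textbf{Proof plan for Proposition \ref{prop:mgf}.} The plan is to derive a one-step branching recursion for the moment generating function $\phi_n(x):=E\exp(\theta U_n(x))$ and its shifted version $\psi_n(x):=\phi_n(x)-1$, and then close the induction using $\psi_n(x)\le K_n P_n(x)$ with a scalar constant $K_n$ whose growth is controlled by the local CLT. Conditioning on the first generation (the root has $L\sim\sQ$ offspring, each placed uniformly on $\mathcal N$, and the subtrees evolve independently) gives
\begin{equation*}
 \phi_n(x)=\sum_L Q_L\Bigl(\tfrac{1}{2d+1}\sum_{y\in\mathcal N}\phi_{n-1}(x-y)\Bigr)^{\!L}=M^*\bigl(1+\bP\psi_{n-1}(x)\bigr),
\end{equation*}
where $M^*(u)=Eu^L$. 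Criticality ($EL=1$) and the exponential tail hypothesis yield, on some disk $\{|z|\le r\}$, the expansion $M^*(1+z)-1\le z+C_M z^2$ with $C_M<\infty$. Hence
\begin{equation*}
 \psi_n(x)\le \bP\psi_{n-1}(x)\bigl(1+C_M\bP\psi_{n-1}(x)\bigr),
\end{equation*}
valid whenever $\bP\psi_{n-1}(x)\le r$.

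The crucial second ingredient is Lemma \ref{lemma:um0}: $U_n(x)\le_{\mathcal D}U_n(0)$, hence $\psi_n(x)\le\psi_n(0)$, which gives the bound $\bP\psi_{n-1}(x)\le\psi_{n-1}(0)$ that will be used to control the quadratic term. I would then do induction on $n$ with hypothesis $\psi_n(y)\le K_n P_n(y)$ for every $y$. The base case $n=0$ is immediate with $K_0=e^{\theta}-1\asymp\theta$. For the step: since $\bP$ is exactly the random-walk kernel,
\begin{equation*}
 \bP\psi_{n-1}(x)\le K_{n-1}P_n(x)\q\text{and}\q\psi_{n-1}(0)\le K_{n-1}P_{n-1}(0),
\end{equation*}
and combining these two inequalities (the first in the linear factor, the second, via Lemma \ref{lemma:um0}, in the quadratic factor) gives
\begin{equation*}
 \psi_n(x)\le K_n P_n(x)\qq\text{with}\qq K_n=K_{n-1}\bigl(1+C_M K_{n-1}P_{n-1}(0)\bigr).
\end{equation*}
Provided $K_{n-1}P_{n-1}(0)\le r$ throughout, iterating yields
\begin{equation*}
 K_n\le K_0\exp\!\Bigl(C_M\sup_{m<n}K_m\cdot\sum_{m=0}^{n-1}P_m(0)\Bigr).
\end{equation*}

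The final step is to close the bootstrap by analyzing $\sum_{m=0}^{n-1}P_m(0)$ via the local CLT ($P_m(0)\asymp m^{-d/2}$). In dimensions $d\ge3$ this series converges, so choosing $K^*=2K_0\asymp\theta$ as the candidate uniform bound, the exponential factor above is $\le e^{C'K^*}\le 2$ once $\theta$ is smaller than some $\delta_d$; the bootstrap closes and yields \eqref{eq:mgfD3} with a constant $C$ independent of $n$ and $x$. In dimension $d=2$ the series diverges like $A\log n$, so the same computation gives only
\begin{equation*}
 K_n\le K_0\cdot n^{C_M A\sup_m K_m}.
\end{equation*}
This forces us to downscale the parameter: replace $\theta$ by $\theta/\log n$ throughout, so $K_0\asymp\theta/\log n$. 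Then taking $K^*=C\theta/\log n$, the power becomes $n^{C_M AC\theta/\log n}=e^{C_M AC\theta}$, which is bounded once $\theta\le\delta_2$ is small enough, closing the bootstrap and giving \eqref{eqn:2d_mgf}. The main obstacle is the $d=2$ case: without the monotonicity bound $\bP\psi_{n-1}(x)\le\psi_{n-1}(0)$ from Lemma \ref{lemma:um0}, the quadratic error term in the recursion could only be bounded by $(K_{n-1}P_n(x))^2$, which loses the extra decay factor $P_{n-1}(0)$ that makes $\sum P_m(0)$ the relevant series; it is this use of the hitting-site domination to split the square into one ``mass'' factor and one ``diagonal'' factor that allows the logarithmic divergence (rather than a polynomial one) and hence the $1/\log n$ rescaling to suffice.
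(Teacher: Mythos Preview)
Your proposal is correct and follows essentially the same route as the paper: derive the branching recursion for $G_n(x):=E e^{\theta U_n(x)}-1$, use Lemma~\ref{lemma:um0} to replace the nonlinear factor $\bP G_{n-1}(x)$ by the scalar $G_{n-1}(0)$, thus reducing to a multiplicative recursion whose growth is governed by $\sum_m P_m(0)$, and then split into the transient case $d\ge 3$ (bounded product) and the recurrent case $d=2$ (logarithmic blowup absorbed by rescaling $\theta\to\theta/\log n$). The paper's only organizational difference is that it introduces explicit dominating functions $H_n$ satisfying the \emph{linear} recursion $H_{n+1}=\bP H_n\cdot\Phi'(1+H_n(0))$, obtains the closed form $H_n(x)=(2d+1)P_n(x)H_1(0)\prod_{j<n}\Phi'(1+H_j(0))$, and then controls the product via a telescoping-series identity rather than your bootstrap; but your ansatz $\psi_n\le K_nP_n$ with $K_n=K_{n-1}(1+C_MK_{n-1}P_{n-1}(0))$ is precisely the same object, and the bootstrap closes exactly where the telescoping sum does.
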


\begin{proof}
Let $\Phi (z)=\sum_{l=1}^{\infty}Q_{l}z^{l}$ be the probability
generating function of~$\mathcal{Q}$. By hypothesis, $\Phi (z)$ is
finite and analytic in a neighborhood of the closed disc $|z|\leq
e^{\delta}$ for some $\delta >0$, and since the variance of
$\mathcal{Q}$ is strictly positive, $\Phi (z)$ is strictly convex on
$[0,e^{\delta}]$. Moreover, $\Phi ' (1)=1$,  because the offspring
distribution has mean $1$.

Define
\begin{equation*}
    G_{n} (x)=G_{n} (x;\theta) =E\exp(\theta U_{n}(x))-1.
\end{equation*}
Clearly, $G_{n} (x;\theta) \rightarrow 0$ as $\theta \rightarrow
0$. Moreover, by Lemma~\ref{lemma:um0}, for each value of $\theta
>~0$ the function $G_{n} (x)$ is maximal at $x=0$.  Since the
random variables $U_{1} (x)$ are zero except for $x\in~\mathcal{N}$,
and have the same distribution for $x\in~\mathcal{N}$, the function
$G_{1} (x)$ is, for any fixed $\theta$, a scalar multiple of the
uniform distribution $P_{1}$ on $\mathcal{N}$. Conditioning on the
first generation of the branching random walk shows that
\begin{equation}\label{eq:gmMarkov}
      G_{n+1}(x)+1=\Phi (\zz{P}G_{n} (x)+1)
\end{equation}
where $\zz{P}$ is the one-step Markov operator for the simple
random walk, that is, $\zz{P}f (x)=Ef (x+Y)$ where $Y$ is
uniformly distributed on $\mathcal{N}$.  Since $\Phi(1)=1$ and
$\Phi (z)$ is strictly convex for $z\in [0,e^{\delta}]$, equation
\eqref{eq:gmMarkov} implies that
\begin{equation}\label{eq:gm2}
        G_{n+1}(x) \leq \zz{P}G_{n} (x) \Phi '(1+\zz{P}G_{n} (x)).
\end{equation}

Unfortunately, both relations \eqref{eq:gmMarkov} and \eqref{eq:gm2}
are nonlinear in $G_{n}$. For this reason, we introduce dominating functions
$H_{n} (x)=H_{n} (x;\theta )$ that satisfy corresponding \emph{linear}
relations: Set $H_{1} (x)=G_{1} (x)$, and define $H_{n}$ inductively
by
\begin{equation}\label{eqn:hminduction}
    H_{n+1}(x) = \zz{P}H_{n}(x) \Phi ' (1+H_{n} (0)).
\end{equation}
Note that $H_{n+1}$ may take the value $+\infty$ if $H_{n} (0)$
exceeds the radius of convergence of $\Phi$.
Since $G_{n} (x)\leq G_{n} (0)$, the inequality \eqref{eq:gm2} implies
that $H_{2}\geq G_{2}$, and so by induction that $H_{n}\geq G_{n}$ for
all $n\geq 1$. Thus, to prove inequalities \eqref{eq:mgfD3} and
\eqref{eqn:2d_mgf} it suffices to prove analogous inequalities for the
functions $H_n(x;\theta )$.

The advantage of working with the functions $H_{n}$ is that the linear
relation \eqref{eqn:hminduction} can be iterated. In general, if
functions $f$ and $g$ satisfy $g=a\zz{P}f$ for some scalar $a$, then
$\zz{P}g=a\zz{P}^{2}f$. Employing this identity in equation
\eqref{eqn:hminduction} and iterating yields
\[
    H_{n} (x)=\zz{P}^{n-1}H_{1} (x)\prod_{j=1}^{n-1} \Phi ' (1+H_{j} (0)).
\]
Because the function $H_{1}=G_{1}$ is itself a scalar multiple of
$P_{1}$, it follows that
\begin{equation}\label{eq:HnIterative}
     H_{n}(x;\theta )=P_{n}(x)H_{1} (0;\theta ) (2d+1)
                \prod_{j=1}^{n-1} \Phi' (1+H_{j} (0;\theta )).
\end{equation}
Since $\Phi ' (1)=1$, the factors in the product are well-approximated
by $(1+\Phi '' (1)H_{j} (0;\theta))$ as long as $H_{j} (0;\theta)$
remains small. In particular, for suitable constants $C<\infty$ and
$\varepsilon >0$, if $H_{j} (0;\theta)<\varepsilon$ for all
$j\leq n-1$ then
\begin{equation}\label{eq:hProdIneq}
            H_{n}(x;\theta )\leq  (2d+1) P_{n}(x)H_{1} (0;\theta )
                   \prod_{j=1}^{n-1} (1+CH_{j} (0;\theta)),
\end{equation}
equivalently,
\begin{equation}\label{eq:hFracIneq}
    \frac{H_{n} (0;\theta )}{\prod_{j=1}^{n} (1+CH_{j}
    (0;\theta))}
    \leq (2d+1) P_{n} (0) H_{1} (0;\theta ) .
\end{equation}
The large-$n$ behavior of the products on the right side of
\eqref{eq:hProdIneq} will depend on whether or
not the sequence $P_{n} (0)$ is summable, that is, on whether or not
the simple random walk is transient. There are two cases to consider:

\medskip \noindent \textbf{Dimensions $d\geq 3$:} In dimensions $d\geq
3$, the return probabilities $P_{n} (0)$ are summable. Moreover, when
$\theta>0$ is small, the factor $(2d+1)H_{1} (0;\theta)$ on the right
side of \eqref{eq:hFracIneq} is also small, because $H_{1}=G_{1}$ is a
continuous function of $\theta$ that takes the value $0$ at $\theta
=0$.  Hence, by choosing $\theta$ small we can make the sum over $n$
of the quantities on the right side of inequality \eqref{eq:hFracIneq}
arbitrarily small. Now the fraction on the left side of
\eqref{eq:hFracIneq} is the $n$th term of the telescoping series
\begin{equation}\label{eq:telescoping}
    C^{-1}\sum \left( \frac{1}{\prod_{j=1}^{n-1} (1+CH_{j}(0;\theta))}
    -\frac{1}{\prod_{j=1}^{n} (1+CH_{j}(0;\theta))} \right);
\end{equation}
consequently,  \eqref{eq:hFracIneq} implies that
for all sufficiently small $\theta >0$ the products
\[
    \prod_{j=1}^{n} (1+CH_{j}(0;\theta))
\]
remain bounded for large $n$, and for small $\theta$ remain close
to $0$. It now follows by \eqref{eq:hProdIneq} that for a suitable
constant $C'<\infty$ and all small $\theta$ the functions $H_{n}
(x;\theta)$ are all finite, and satisfy
\[
        H_{n}(x;\theta )\leq C' P_{n}(x)H_{1} (0;\theta ).
\]
Finally, the differentiability of $H_{1} (0;\theta)$ in $\theta$
guarantees that $H_{1} (0;\theta)\leq C\theta$ for an appropriate
constant $C<\infty$ for all small $\theta$.  This proves
\eqref{eq:mgfD3}.

\medskip \noindent \textbf{Dimension $d=2$:} It is still the case that
the fraction on the left side of \eqref{eq:hFracIneq} is the $n$th
term of the telescoping series \eqref{eq:telescoping}, but since
$\sum P_{n} (0)$ diverges, this no longer implies that the
products on the right side of \eqref{eq:hProdIneq} remain bounded.
However, the local central limit theorem gives an explicit
estimate for the partial sums of the return probabilities: in
particular, for some $C'\geq A=5/ (4\pi)$,
\[
    \sum_{j=1}^{n}P_{j} (0)\leq  C\log n
    \quad \text{for all} \;n\geq 2.
\]
Consequently, substituting $\theta /\log n$ for $\theta$ in inequality
\eqref{eq:hFracIneq} and summing gives
\[
    1-\prod_{j=1}^{n} (1+CH_{j}(0;\theta/\log n))^{-1}
    \leq C'' \theta .
\]
This in turn implies that
\[
    \prod_{j=1}^{n} (1+CH_{j}(0;\theta/\log n))
    \leq 1/ (1-C''\theta).
\]
Using this upper bound for the product on the right side of
\eqref{eq:hProdIneq} and using the bound $H_{1} (0;\theta/\log
n)\leq C\theta/\log n$ for small $\theta$ yields
\eqref{eqn:2d_mgf}.
\end{proof}

\begin{remark}
In dimensions $d\geq 3$, the conclusion \eqref{eq:mgfD3} cannot be
extended to all $\theta >0$,  even for the double-or-nothing case.
In fact, for sufficiently large $\theta $, the sums $\sum_{x\in
\zz{Z}^{d}}\left(E\exp \{\theta U_{n} (x) \}-1\right)$ are  not
 bounded in~$n$.
\end{remark}

\begin{remark}
In dimension $d=2$, the relation (\ref{eqn:2d_mgf})  does not hold for large
$\theta $. See Remark \ref{rmk:2nd_moment} below.
\end{remark}

We are now prepared to prove Theorem \ref{thm:mgf}. In fact, we will establish
the following  stronger result:

\begin{cor}\label{cor:mgf}  Under the hypotheses of Proposition \ref{prop:mgf},
with the same notations,
\begin{itemize}
 \item[(i)] If $d\geq 3,$ then for all $\theta \leq \delta_{d}$,
$$
   P\left(\left.V_n\geq  \frac{\log n}{\theta}\right|G_n\right)
   =O\left(\frac{1}{n^{\delta_d/\theta-1}}\right).
$$
In particular, conditional on $G_n$,
$V_n=O_p(\log n)$.\\
 \item[(ii)] If $d=2,$ then for all $\theta \leq \delta_{2}$,
$$
   P\left(\left.V_n\geq  \frac{(\log n)^2}{\theta}\right|G_n\right)
   =O\left(\frac{1}{n^{\delta_2/\theta}-1}\right).
$$
In particular, conditional on $G_n$,
$V_n=O_p((\log n)^2)$.\\
\end{itemize}
\end{cor}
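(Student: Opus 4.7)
The plan is to combine the exponential moment bounds of Proposition \ref{prop:mgf} with a Chernoff-style tail estimate and a union bound over $x$, and then convert the unconditional bound into the conditional one via Kolmogorov's estimate $\pi_n \asymp 1/n$ from \eqref{eq:survivalToGenM}.

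The key technical observation is the following ``shifted Markov inequality'': for any $\theta >0$ and any $t\geq 1$, on the event $\{U_n(x)\geq t\}$ we have $e^{\theta U_n(x)}-1\geq e^{\theta t}-1$, hence
\[
    P(U_n(x)\geq t)\leq \frac{E e^{\theta U_n(x)}-1}{e^{\theta t}-1}.
\]
The subtraction of $1$ on both sides is essential: without it, summing over $x\in\mathbb{Z}^d$ would yield a divergent bound, whereas with it the right-hand side is $O(P_n(x))$ by Proposition \ref{prop:mgf}, and $\sum_x P_n(x)=1$.

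For $d\geq 3$, apply the above with parameter $\delta_d$ from \eqref{eq:mgfD3}, then union-bound to obtain
\[
    P(V_n\geq t)\leq \sum_x \frac{C\delta_d P_n(x)}{e^{\delta_d t}-1}=\frac{C\delta_d}{e^{\delta_d t}-1}.
\]
Setting $t=\log n/\theta$ for $\theta\leq \delta_d$ produces $O(1/n^{\delta_d/\theta})$, and multiplying by $\pi_n^{-1}=O(n)$ gives the claimed $O(1/n^{\delta_d/\theta-1})$. Choosing $\theta$ small enough that $\delta_d/\theta>1$ makes this tend to zero, proving tightness of $V_n/\log n$ conditional on $G_n$. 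For $d=2$, the identical scheme is run with \eqref{eqn:2d_mgf} in place of \eqref{eq:mgfD3}: the shifted Markov step with parameter $\delta_2/\log n$ yields
\[
    P(V_n\geq t)\leq \frac{C\delta_2/\log n}{e^{\delta_2 t/\log n}-1},
\]
and plugging in $t=(\log n)^2/\theta$ turns $e^{\delta_2 t/\log n}$ into $n^{\delta_2/\theta}$. Dividing by $\pi_n\asymp 1/n$ delivers the stated bound (the extra $1/\log n$ factor being absorbed into the $O(\cdot)$) and hence tightness of $V_n/(\log n)^2$ conditional on $G_n$.

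There is no real obstacle: once Proposition \ref{prop:mgf} is granted, the corollary reduces to routine bookkeeping. The only delicate point is the ``$-1$'' in the shifted Markov bound, which encodes the fact that typically $U_n(x)=0$, so it is the hitting contribution (which is controlled by $P_n(x)$ via the fundamental relation \eqref{eq:fundamental}) rather than the raw exponential moment that drives the union bound.
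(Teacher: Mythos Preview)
Your proof is correct and follows essentially the same route as the paper's: both combine the exponential moment bound of Proposition~\ref{prop:mgf} with a Chernoff/union bound over sites and then pass to the conditional measure via Kolmogorov's estimate. The only cosmetic difference is in how the ``$-1$'' is packaged: the paper restricts to $\{U_n(x)>0\}$ and uses the identity $E\bigl[e^{\theta U_n(x)}\indic_{\{U_n(x)>0\}}\bigr]=(Ee^{\theta U_n(x)}-1)+P(U_n(x)>0)$ before summing, whereas your shifted Markov inequality $P(U_n(x)\geq t)\leq (Ee^{\theta U_n(x)}-1)/(e^{\theta t}-1)$ builds the subtraction in from the start; both devices serve exactly the same purpose of making the sum over $x$ controlled by $\sum_x P_n(x)=1$.
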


\begin{proof}
We will prove this  only for dimensions $d\geq 3$; the
dimension $d=2$ case can be handled similarly. By Markov's inequality,
$$\aligned
   P\left(\left.V_n\geq  \frac{\log n}{\theta}\right|G_n\right)
   &\leq \frac{1}{\exp(\delta_d/\theta \cdot \log n)}\sum_x E \left(\left.
      \exp(\delta_d U_n(x))\cdot \indic_{\{U_n(x)> 0\}} \right| G_n\right)\\
   &=O\left( \frac{1}{n^{\delta_d/\theta}-1}\cdot\sum_x E\left(\exp(\delta_d U_n(x))
    \cdot \indic_{\{U_n(x)> 0\}}\right)\right).
\endaligned$$
For any random variable $X\geq 0$,
$$\aligned
   E \exp(X)
   &=E \exp(X)\cdot \indic_{\{X>0\}} + E \exp(X)\cdot \indic_{\{X=0\}}\\
   &=E \exp(X)\cdot \indic_{\{X>0\}} + P(X=0)\\
   &=E \exp(X)\cdot \indic_{\{X>0\}} + 1 - P(X>0).
\endaligned$$
Hence
$$\aligned
  &\sum_x E \left(\exp(\delta_d U_n(x))\cdot \indic_{\{U_n(x)>
  0\}}\right)\\
   =&\sum_x \left(E \exp({\delta_d U_n(x)}) -1 \right)+ \sum_x P(U_n(x)>0)\\
  \leq& \sum_x \left(E \exp({\delta_d U_n(x)}) -1\right) + 1,
  \endaligned
$$
so by Proposition \ref{prop:mgf},
$$
  \sum_x E \left(\exp({\delta_d U_n(x)})\cdot \indic_{\{U_n(x)> 0\}}\right)\leq C
  \quad \text{for all}\; n\geq 1.
$$
The conclusion follows.
\end{proof}

\section{Proof of Theorem  \ref{thm:lu}}\label{sec:vn_lu}

The proof uses the following elementary lemma, whose proof is left to
the reader.

\begin{lemma}\label{lemma:conditioning} Suppose that on some probability
space $(\Omega, \mathcal{F}, P)$ there are two events $E_1, E_2$
such that
\begin{equation}\label{eqn:sym_diff}
    \frac{P(E_1\Delta E_2)}{P(E_{1})}\leq \varepsilon,
\end{equation}
where $E_1\Delta E_2$ is the symmetric difference of $E_1$ and
$E_2$. Then
\begin{equation}\label{eqn:tv_diff}
   ||P(\cdot|E_1) - P(\cdot|E_2)||_{TV}
   \leq 2\varepsilon,
\end{equation}
where $P(\cdot|E_i)$ denotes the conditional probability measure given
the event $E_{i}$ and $||\cdot||_{TV}$ denotes the total variation distance.
\end{lemma}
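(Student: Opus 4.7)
The plan is to prove (\ref{eqn:tv_diff}) directly from the supremum definition of total variation distance: it suffices to show that
\[
|P(A|E_1) - P(A|E_2)| \leq 2\varepsilon \quad \text{for every} \; A\in\mathcal{F}.
\]
The main algebraic idea is a standard telescoping of the difference of ratios:
\[
\frac{P(A\cap E_1)}{P(E_1)} - \frac{P(A\cap E_2)}{P(E_2)}
= \frac{P(A\cap E_1) - P(A\cap E_2)}{P(E_1)}
+ \frac{P(A\cap E_2)}{P(E_2)}\cdot\frac{P(E_2)-P(E_1)}{P(E_1)},
\]
chosen so that each summand can be bounded separately by a direct application of the hypothesis (\ref{eqn:sym_diff}).

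For the first summand, the set-theoretic inclusion $(A\cap E_1)\Delta(A\cap E_2)\subseteq E_1\Delta E_2$ yields
\[
|P(A\cap E_1) - P(A\cap E_2)| \leq P(E_1\Delta E_2) \leq \varepsilon P(E_1),
\]
so the first term is bounded in absolute value by $\varepsilon$. For the second summand, $P(A\cap E_2)/P(E_2)\leq 1$ trivially, and $|P(E_2)-P(E_1)|\leq P(E_1\Delta E_2)\leq \varepsilon P(E_1)$, so this term is also at most $\varepsilon$ in absolute value. Adding the two bounds and taking the supremum over $A\in\mathcal{F}$ gives (\ref{eqn:tv_diff}).

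I do not anticipate any real obstacle: the lemma is a purely formal manipulation of conditional probabilities, and the only ``choice'' is the additive decomposition above, dictated by the desire to have one piece controlled by the numerator estimate and the other by the denominator estimate, both of which are immediate consequences of (\ref{eqn:sym_diff}).
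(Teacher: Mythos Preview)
Your proof is correct: the algebraic telescoping identity is valid, the set-theoretic inclusion $(A\cap E_1)\Delta(A\cap E_2)=A\cap(E_1\Delta E_2)\subseteq E_1\Delta E_2$ gives the bound on the first summand, and the inequality $|P(E_2)-P(E_1)|\leq P(E_1\Delta E_2)$ handles the second. The paper does not supply its own proof of this lemma---it is stated as elementary and left to the reader---so there is nothing to compare against; your argument is exactly the kind of routine verification the authors had in mind.
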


Lemma \ref{lemma:conditioning} will allow us to replace the event of
conditioning $G_{n}$ in Theorems~\ref{thm:lu} and \ref{thm:occ3d} by
asymptotically equivalent events of the form
\begin{equation}\label{eq:hn}
    H_{n}=\{Z_{m(n)}\geq n\varepsilon_{n} \}.
\end{equation}

\begin{lemma}\label{lemma:timeJiggle}
Let $m(n)< n$ be  integers and $\varepsilon_{n}>0$   real numbers
such that  $m(n)/n \rightarrow 1$ and $\varepsilon_{n} \rightarrow
0$ as $n \rightarrow \infty$. Then
\begin{equation}\label{eq:noAsyDiff}
    \lim_{n \rightarrow \infty} \frac{P (G_{n}\Delta H_{n})}{P(G_{n})}=0.
\end{equation}
\end{lemma}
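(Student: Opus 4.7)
The plan is to show the two halves of the symmetric difference, $P(G_n\setminus H_n)$ and $P(H_n\setminus G_n)$, are each $o(P(G_n))$. Two ingredients will do all the work. First, Kolmogorov's estimate \eqref{eq:survivalToGenM} gives $P(G_{m(n)})/P(G_n)\sim n/m(n)\to 1$. Second, Yaglom's theorem says that conditional on $G_{m(n)}$ the law of $Z_{m(n)}/m(n)$ converges weakly to an exponential random variable $Y$ with mean $2/\sigma^2$. Since $m(n)/n\to 1$ and $\varepsilon_n\to 0$ jointly imply $n\varepsilon_n/m(n)\to 0$, and $Y>0$ almost surely, Yaglom's theorem yields $P(Z_{m(n)}\geq n\varepsilon_n\mid G_{m(n)})\to P(Y>0)=1$.

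The first half is immediate. Because $G_n\subseteq G_{m(n)}$ (survival to generation $n$ forces survival to every earlier generation), I can enlarge $G_n$ to $G_{m(n)}$ and factor:
\[
 P(G_n\setminus H_n)\leq P\bigl(G_{m(n)}\cap\{Z_{m(n)}<n\varepsilon_n\}\bigr)=P(G_{m(n)})\cdot P(Z_{m(n)}<n\varepsilon_n\mid G_{m(n)})=o(P(G_n)).
\]

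For the second half I will use the elementary identity
\[
 P(H_n\setminus G_n)=P(H_n)-P(H_n\cap G_n)=P(H_n)-P(G_n)+P(G_n\setminus H_n).
\]
Since $H_n=\{Z_{m(n)}\geq n\varepsilon_n\}\subseteq G_{m(n)}$ (the event forces $Z_{m(n)}\geq 1$), the same two ingredients give $P(H_n)=P(G_{m(n)})\cdot P(H_n\mid G_{m(n)})=(1-o(1))\,P(G_n)$. Combining this with the bound on $P(G_n\setminus H_n)$ obtained above, the identity yields $P(H_n\setminus G_n)/P(G_n)=(1-o(1))-1+o(1)=o(1)$, and adding the two halves gives \eqref{eq:noAsyDiff}.

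The main obstacle is estimating $P(H_n\setminus G_n)$. The most natural direct attack — condition on $Z_{m(n)}=k$ and bound the conditional extinction probability by $(1-\pi_{n-m(n)})^k\leq\exp(-n\varepsilon_n\pi_{n-m(n)})$ for $k\geq n\varepsilon_n$ — requires quantitative control such as $n\varepsilon_n\pi_{n-m(n)}\to\infty$, which the stated hypotheses (only $\varepsilon_n\to 0$ and $m(n)/n\to 1$) do not supply. The inclusion–exclusion identity above sidesteps this by trading the awkward $P(H_n\setminus G_n)$ for the three quantities $P(H_n)$, $P(G_n)$, and $P(G_n\setminus H_n)$, all of which are handled cleanly by Kolmogorov's estimate and Yaglom's theorem alone.
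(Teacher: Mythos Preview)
Your proof is correct, and it takes a somewhat different route from the paper's. The paper introduces the intermediate event $K_n=\{Z_{m(n)}\geq 1\}=G_{m(n)}$ and shows separately that $P(H_n\Delta K_n)/P(K_n)\to 0$ (via Yaglom) and $P(G_n\Delta K_n)/P(K_n)\to 0$. For the second of these, the paper argues \emph{probabilistically}: conditional on $K_n$, Yaglom gives $Z_{m(n)}>\alpha n$ with high probability, and then each of those $\alpha n$ particles independently survives the remaining $n-m(n)$ generations with probability $\sim 2/((n-m(n))\sigma^2)$, so the chance that none survives is negligible. Lemma~\ref{lemma:conditioning} is then invoked to splice the two comparisons together.

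You bypass both the intermediate event and the branching survival argument. Your key observation is that, since $G_n\subset G_{m(n)}$, Kolmogorov's estimate \emph{alone} already gives $P(G_n)/P(G_{m(n)})\to 1$; combined with $P(H_n)/P(G_{m(n)})\to 1$ from Yaglom, this yields $P(H_n)-P(G_n)=o(P(G_n))$, and your inclusion--exclusion identity converts this directly into the bound on $P(H_n\setminus G_n)$. This is more economical: it shows the lemma follows from the unconditional asymptotics of $\pi_n$ and Yaglom's theorem with no need to look inside the branching mechanism between times $m(n)$ and $n$. The paper's argument, on the other hand, makes the mechanism for $P(G_n\mid H_n)\to 1$ transparent, which is conceptually useful elsewhere.
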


\begin{proof}
This is an easy consequence of Kolmogorov's estimate
\eqref{eq:survivalToGenM} and Yaglom's theorem for critical
Galton-Watson processes. Let $K_{n}=\{Z_{m(n)}\geq~1 \}$. Clearly,
$H_{n}\subset K_{n}$, and so $P (K_{n}\,|\,H_{n})=~1$. On the other
hand, Yaglom's theorem implies that $P (H_{n}\,|\,K_{n})\rightarrow
1$, since $m(n)/n \rightarrow 1$. Consequently,
\begin{equation}\label{eq:equiv_HK}
    \lim_{n \rightarrow \infty}\frac{P (H_{n}\Delta K_{n})}{P (K_{n})}=0.
\end{equation}

A similar argument shows that the symmetric difference $K_{n}\Delta
G_{n}$ is an asymptotically negligible part of $K_{n}$. Obviously,
$G_{n}\subset K_{n}$, so $P (K_{n}\,|\,G_{n})=~1$. Yaglom's theorem
implies that for any $\delta
>0$ there exists $\alpha >0$ such that
\[
    P (Z_{m(n)}>\alpha n \,|\, K_{n})\geq 1-\delta.
\]
But on the event $\{Z_{m(n)}>\alpha n\} $ the event $G_{n}$ of
survival to generation $n$ is nearly certain for large~$n$,
because  the $Z_{m(n)}$ particles in generation $m(n)$ initiate
independent Galton-Watson processes, each of which survives to
generation $n$ with probability $\sim 2/ (n-m(n))\sigma^{2}$, by
Kolmogorov's estimate~\eqref{eq:survivalToGenM}.  Hence,
\[
    P (G_{n}\,|\, K_{n})\geq 1-2\delta
\]
for large $n$. Since $\delta >0$ is arbitrary, it follows that $P
(G_{n}\,|\,K_n) \rightarrow 1$. By Lemma \ref{lemma:conditioning}
and \eqref{eq:equiv_HK} we get
\begin{equation}\label{eq:GH}
   P(G_n\, |\, H_n)\to 1.
\end{equation}
Furthermore, since $G_{n}\subset K_{n}$,
\[
    \lim_{n \rightarrow \infty}\frac{P (G_{n}\Delta K_{n})}{P (K_{n})}=0.
\]
By Lemma~\ref{lemma:conditioning}, this implies that conditioning on
$G_{n}$ is asymptotically equivalent to conditioning on $K_{n}$, and
so the difference $P (H_{n}\,|\,K_{n})-P (H_{n}\,|\,G_{n})
\rightarrow~0$. But we have seen that $P (H_{n}\,|\,K_{n})
\rightarrow~1$, hence $P(H_n\,|\,G_n)\to 1.$ This, along with
\eqref{eq:GH}, implies \eqref{eq:noAsyDiff}.
\end{proof}

\begin{proof}[Proof of Theorem \ref{thm:lu}]
The offspring distribution is non-degenerate, so there exists
$l_0>1$ such that $Q_{l_0}>0$. Let $p=Q_{l_0}\cdot
\left(1/(2d+1)\right)^{l_0}$ be the probability that the initial
particle produces $l_0$ offspring  and these offspring all stay at
the origin. Then for all $k\in \mathbb{N}$,
$$
 P(U_k(0)\geq l_0^k) \geq p\cdot p^{l_0}\cdot p^{l_0^2}\ldots
 p^{l_0^{k-1}} \geq p^{l_0^k/(l_0-1)}.
$$
Our objective is to show that for some $\delta>0$, $P(V_n\geq \delta
\log n|Z_n>0)\to 1$. By Lemmas \ref{lemma:conditioning} and
\ref{lemma:timeJiggle}, this will follow if we can show that for some
$m(n)\leq n$ with $m(n)/n\to 1$ and some $\varepsilon_n\to 0$, the
probability
$$
   P(V_n\geq \delta \log n|Z_{m(n)}>n\varepsilon_n)\to 1.
$$
To do so, for $\delta>0$ to be determined later, and all $n$ big
enough, define $k$ such that $l_0\delta \log n > l_0^k \geq \delta
\log n,$ and $m(n)=n-k$. Then $m(n)/n\to 1$.  Fix a
sequence $\varepsilon_n=O(1/\log n)$; then
\begin{equation}\label{eqn:vn_gn}
\aligned
   &P\left(V_n\geq \varepsilon \log n \left| \frac{Z_{m(n)}}{n} \geq
         \varepsilon_n\right.\right)\\
   \geq& 1-\left(1-P(U_k(0)\geq l_0^k)\right)^{\varepsilon_n n}\\
   \geq& 1-\left(1-p^{l_0^k/(l_0-1)}\right)^{\varepsilon_n n}\\
   \geq& 1-\exp\left(\varepsilon_n n \left(-p^{l_0^k/(l_0-1)}\right)\right)\\
   \geq& 1-\exp\left(-\varepsilon_n n p^{l_0\delta \log n/(l_0-1)}\right)\\
   =&1-\exp\left(-\varepsilon_n n^{1+l_0\delta\log
         p/(l_0-1)}\right)
   \to 1
\endaligned
\end{equation}
provided that $\delta <(l_0-1)/(-l_0\log p).$
\end{proof}

\section{Proof of Theorem \ref{thm:occ3d} }\label{sec:occ3d}

\subsection{Strategy}\label{ssec:strategyTh4} By Lemmas
\ref{lemma:conditioning} and \ref{lemma:timeJiggle}, the difference
between conditioning on the event $G_n=\{Z_n>0\}$ and conditioning on
the event $H_n:=\{Z_{m(n)}\geq n\varepsilon_n\}$ is
asymptotically negligible if $m(n)/n \rightarrow 1$ and
$\varepsilon_{n} \rightarrow 0$. Thus, it suffices to prove the weak
convergence of the conditional distributions in \eqref{eq:occStats}
when the conditioning event is $H_{n}$ rather than
$G_{n}$. The advantage of this is that, conditional on the state of the
branching random walk at time $m (n)$, the next $n-m (n)$ generations
are gotten by running  \emph{independent} branching random walks
for time $n-m (n)$ starting from the locations of the particles in
generation $m (n)$. The argument will  hinge on showing that if
$m(n)<n$ is chosen appropriately then these
independent branching random walks will not overlap much at time $n$,
and so the total number $M_{n} (j)$ of multiplicity-$j$ sites will be,
approximately, the sum of  $Z_{m (n)}$ independent copies of
$M_{n-m(n)} (j)$.

\subsection{Overlapping}\label{ssec:overlapping}

\begin{lemma}\label{lemma:dn} Suppose that a critical branching random
walk starts at time~0 with two particles $u,v$ located at sites
$x_u, x_v \in \mathbb{Z}^d$, respectively. Let $D_n(u,v)$ be the
number of particles in generation~$n$ located at sites with
descendants of both $u$ and $v$. Then there exists $C>0$ such that
for all generations $n\geq 1$,
\begin{equation}\label{eqn:dn}
    ED_n(u,v) \leq 2P_{2n} (x_{v}-x_{u})\leq  C\left(1/\sqrt{n}\right)^d.
\end{equation}
\end{lemma}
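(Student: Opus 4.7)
The plan is a direct first-moment calculation, exploiting the independence of the branching random walks initiated by $u$ and $v$, followed by Chapman--Kolmogorov and the local central limit theorem.

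First I would unpack the definition of $D_n(u,v)$. A site $x$ in generation $n$ is counted (once for each particle sitting there) exactly when both $U_n^u(x)\geq 1$ and $U_n^v(x)\geq 1$, so
\[
    D_n(u,v)=\sum_{x\in\zz{Z}^d}\bigl(U_n^u(x)+U_n^v(x)\bigr)\,
    \indic_{\{U_n^u(x)\geq 1,\,U_n^v(x)\geq 1\}}.
\]
Because the descendant families of $u$ and $v$ evolve as independent branching random walks, taking expectations and pulling out the factor $\indic_{\{U_n^v(x)\geq 1\}}$ (respectively $\indic_{\{U_n^u(x)\geq 1\}}$) gives
\[
    ED_n(u,v)=\sum_x EU_n^u(x)\,P\bigl(U_n^v(x)\geq 1\bigr)
        +\sum_x EU_n^v(x)\,P\bigl(U_n^u(x)\geq 1\bigr).
\]

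Next I would apply the fundamental identity $EU_n^w(x)=P_n(x-x_w)$ (equation \eqref{eq:fundamental}) together with the trivial bound $P(U_n^w(x)\geq 1)\leq EU_n^w(x)=P_n(x-x_w)$, obtaining
\[
    ED_n(u,v)\leq 2\sum_x P_n(x-x_u)P_n(x-x_v).
\]
By the symmetry $P_n(y)=P_n(-y)$ of the simple random walk, the sum on the right is the convolution $(P_n\ast P_n)(x_v-x_u)=P_{2n}(x_v-x_u)$, which yields the first inequality $ED_n(u,v)\leq 2P_{2n}(x_v-x_u)$. The second inequality then follows immediately from the uniform local central limit theorem bound $P_{2n}(y)\leq C/\sqrt{n}^{\,d}$.

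There is no real obstacle here: the only subtlety is correctly writing out $D_n(u,v)$ so that independence can be applied to the indicator. The argument is essentially a second-moment-style computation that collapses into a single convolution of $n$-step transition probabilities.
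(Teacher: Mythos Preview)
Your proof is correct and follows essentially the same approach as the paper: write $D_n(u,v)$ as a sum over sites of $(U^u_n(x)+U^v_n(x))$ times the joint indicator, use independence and the bound $P(U^w_n(x)\geq 1)\leq EU^w_n(x)=P_n(x-x_w)$, and collapse the resulting convolution to $2P_{2n}(x_v-x_u)$ via Chapman--Kolmogorov, then invoke the local CLT. The only cosmetic difference is that the paper immediately combines the two symmetric terms into a single factor of $2$, whereas you keep them separate before bounding.
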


\begin{proof}
Denote by $U^{\zeta}_{n} (x)$ the number of descendants of particle
$\zeta$ at site~$x$ in generation $n$. Since the progeny of
particles $u$ and $v$ make up mutually independent branching random
walks, the random variables $U^{u}_{n} (x)$ and $U^{v}_{n} (x)$ are
independent. But
\begin{align*}
    ED_{n} (u,v)&=E\sum_{x\in \zz{Z}^{d}} (U^{u}_{n} (x)+U^{v}_{n}
    (x))\, \indic_{\{U^{u}_{n} (x)\geq 1 \}}\, \indic_{\{U^{v}_{n} (x)\geq 1 \}}\\
    &= 2\sum_{x\in \zz{Z}^{d}} EU^{u}_n (x)\, \indic_{\{U^{v}_{n} (x)\geq 1 \}}\\
    &\leq 2\sum_{x\in \zz{Z}^{d}} P_{n} (x-x_{u})P_{n} (x-x_{v})\\
    &=2P_{2n} (x_{v}-x_{u})\\
    &\leq C\left(1/\sqrt{n}\right)^d.
\end{align*}
\end{proof}

\begin{cor}\label{corollary:overlap}
Let $Y_{n;m}$ be the number of particles in generation $n$ located at
sites with descendants of at least two distinct particles of
generation $m<n$. Then
\begin{equation}\label{eq:overlapIneq}
    E (Y_{n;m}\, |\, \mathcal{F}_{m})\leq CZ_{m}^{2}/ (n-m)^{d/2}.
\end{equation}
\end{cor}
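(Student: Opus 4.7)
The plan is to reduce Corollary \ref{corollary:overlap} to Lemma \ref{lemma:dn} by conditioning on $\mathcal{F}_{m}$ and then summing over pairs of generation-$m$ ancestors. By the Markov property of the branching random walk, conditional on $\mathcal{F}_{m}$ the subpopulations of descendants launched by the $Z_{m}$ particles in generation $m$ are mutually independent branching random walks, each run for $n-m$ generations from the location of the respective ancestor.

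The first step is to bound $Y_{n;m}$ pathwise by a sum over pairs of ancestors. For distinct particles $u,v$ in generation $m$, let $D_{n-m}(u,v)$ denote the quantity from Lemma \ref{lemma:dn}, measured relative to the branching random walks launched at time $m$ from the positions of $u$ and $v$: the number of particles in generation $n$ at sites that contain descendants of \emph{both} $u$ and $v$. If a site $x$ in generation $n$ has descendants of exactly $k\geq 2$ ancestors from generation $m$, then all particles at $x$ are counted once in $Y_{n;m}$ but are counted in $D_{n-m}(u,v)$ for each of the $\binom{k}{2}$ pairs, so
\[
    Y_{n;m} \;\leq\; \sum_{\{u,v\}} D_{n-m}(u,v),
\]
where the sum is over unordered pairs of distinct generation-$m$ particles. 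This overcounting is harmless because we only need an upper bound.

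The second step is to take conditional expectations. By Lemma \ref{lemma:dn} applied (conditionally on $\mathcal{F}_{m}$) to the two independent branching random walks launched by $u$ and $v$, we have $E[D_{n-m}(u,v)\,|\,\mathcal{F}_{m}] \leq C(n-m)^{-d/2}$ uniformly in the starting positions $x_{u},x_{v}$. Since there are at most $\binom{Z_{m}}{2}\leq Z_{m}^{2}/2$ pairs, summing yields
\[
    E[Y_{n;m}\,|\,\mathcal{F}_{m}] \;\leq\; \frac{Z_{m}^{2}}{2}\cdot \frac{C}{(n-m)^{d/2}} \;\leq\; \frac{C'Z_{m}^{2}}{(n-m)^{d/2}},
\]
which is the desired bound. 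There is no real obstacle here, since Lemma \ref{lemma:dn} already does the analytic work; the only point requiring care is the combinatorial setup in the first step, where one must verify that every site contributing to $Y_{n;m}$ is indeed captured (at least once) by the sum over pairs of ancestors.
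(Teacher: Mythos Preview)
Your proof is correct and is precisely the argument the paper intends: the corollary is stated immediately after Lemma~\ref{lemma:dn} without separate proof, and the later use in \eqref{eqn:bn} shows the same pair-summation idea. One very minor imprecision: a particle at a site with $k\geq 2$ represented ancestors is not counted in \emph{every} $D_{n-m}(u,v)$, only in the $k-1$ pairs containing its own ancestor; but since $k-1\geq 1$ the pathwise inequality $Y_{n;m}\leq \sum_{\{u,v\}} D_{n-m}(u,v)$ still holds, and the rest of your argument goes through unchanged.
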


\subsection{Convergence of means}\label{ssec:meanConvergence}

\begin{prop}\label{proposition:emn} In dimensions $d\geq 3$,
\begin{align}\label{eq:limMeans}
    \lim_n EM_n(j)&\triangleq \kappa _j  \quad \text{exists for every}
    \;\; j\geq 1, \quad \text{and}\\
\label{eq:sumLimMeans}
    \sum_{j=1}^{\infty} j\cdot\kappa _{j}&=1.
\end{align}
\end{prop}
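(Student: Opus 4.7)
\textbf{Proposal for Proposition \ref{proposition:emn}.}

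The plan is to encode the sequence $\{EM_n(j)\}_{j\geq 1}$ in a single generating function that is easier to control through the branching recursion. For $s\in[0,1)$, set
\[
   h_n(x;s) = 1-E[s^{U_n(x)}] \quad\text{and}\quad F_n(s) = \sum_{x\in\zz{Z}^d}h_n(x;s).
\]
A direct expansion of $h_n$ in powers of $s$ gives the identity $F_n(s)=\sum_{j\geq 1}(1-s^j)EM_n(j)$, so showing that $F_n(s)\to F(s)$ for each $s$ will essentially yield $\kappa_j:=\lim_n EM_n(j)$.

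Conditioning on the first generation produces the functional recursion $h_{n+1}=1-\Phi(1-\zz{P}h_n)$, where $\Phi$ is the offspring PGF and $\zz{P}$ is the simple random walk operator (which preserves sums on $\zz{Z}^d$). Convexity of $\Phi$ with $\Phi'(1)=1$ gives $1-\Phi(1-u)\leq u$, so $F_{n+1}\leq F_n$; monotonicity of $\Phi''$ on $[0,1]$ together with $\Phi''(1)=\sigma^2$ refines this to $u-(1-\Phi(1-u))\leq \sigma^2 u^2/2$ for $u\in[0,1]$, hence
\[
   0\leq F_n(s)-F_{n+1}(s)\leq \frac{\sigma^2}{2}\sum_{x}\bigl(\zz{P}h_n(x;s)\bigr)^2.
\]
The elementary inequality $1-s^k\leq k(1-s)$ combined with the fundamental relation $EU_n(x)=P_n(x)$ yields $h_n(x;s)\leq(1-s)P_n(x)$, so
\[
   \sum_x\bigl(\zz{P}h_n(x;s)\bigr)^2\leq(1-s)^2\sum_x P_{n+1}(x)^2=(1-s)^2 P_{2(n+1)}(0)\leq C(1-s)^2(n+1)^{-d/2}.
\]
In dimensions $d\geq 3$ this is summable in $n$, so the decreasing sequence $F_n(s)$ converges to some $F(s)$ with $F_n(s)-F(s)=O((1-s)^2)$ uniformly in $n$.

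To extract the limits $\kappa_j$, I would use the criticality identity $\sum_j jEM_n(j)=EZ_n=1$ (which yields the uniform tail bound $\sum_{j>K}EM_n(j)\leq 1/K$) together with the pointwise bound $EM_n(j)\leq F_n(s)/(1-s^j)$ to perform a diagonal extraction; passing to the limit in $F_n(s)=\sum_j(1-s^j)EM_n(j)$, with the tail bound justifying the interchange, gives $F(s)=\sum_j(1-s^j)\kappa_j$. Since such an expansion determines $(\kappa_j)_{j\geq 1}$ uniquely, the full sequence $EM_n(j)$ must converge to $\kappa_j$. Finally, a direct computation from the distribution of $U_1$ shows $F_1(s)=(1-s)(1+o(1))$ as $s\to 1^-$; combining this with $F_1(s)-F(s)=O((1-s)^2)$ yields $F(s)/(1-s)\to 1$, while monotone convergence gives $F(s)/(1-s)=\sum_j\tfrac{1-s^j}{1-s}\kappa_j\nearrow \sum_j j\kappa_j$, so $\sum_j j\kappa_j=1$.

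The main technical point is the second-moment estimate that converts transience of simple random walk in $d\geq 3$ (i.e.\ $\sum_n P_{2n}(0)<\infty$) into telescoping control of $F_n-F_{n+1}$; this is precisely where the dimension hypothesis enters, and the same estimate diverges logarithmically when $d=2$, consistent with the qualitatively different behavior of $\Omega_n$ in Theorem~\ref{thm:occ2d}. A minor subtlety is that the Taylor-type bound on $1-\Phi(1-u)$ must be justified using only $\Phi''(1)=\sigma^2<\infty$, but this is routine because $\Phi''$ is monotone on $[0,1]$ and bounded by $\sigma^2$ there.
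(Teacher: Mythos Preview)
Your proof is correct and takes a genuinely different route from the paper's. The paper argues directly for each fixed $j$: it decomposes $M_{n+1}(j)$ according to whether the $j$ particles at a site descend from a common first-generation particle, and bounds the expectation of the correction terms by the \emph{overlap estimate} $E D_n(u,v)\le 2P_{2n}(x_v-x_u)\le Cn^{-d/2}$ (Lemma~\ref{lemma:dn}), yielding $|EM_{n+1}(j)-EM_n(j)|=O(n^{-d/2})$ directly. The identity $\sum_j j\kappa_j=1$ is then proved by a separate, parallel induction on $Y_n(k)=\sum_{j\ge k}jM_n(j)$. Your generating-function approach packages all $j$ at once and replaces the overlap lemma with the convexity bound $u-(1-\Phi(1-u))\le\tfrac{\sigma^2}{2}u^2$ together with $\sum_x P_{n+1}(x)^2=P_{2(n+1)}(0)$; this is the \emph{same} summability fact $\sum_n n^{-d/2}<\infty$, just accessed differently. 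What your route buys is an especially clean derivation of $\sum_j j\kappa_j=1$ via the uniform error $F_n(s)-F(s)=O((1-s)^2)$ and the Abelian step $F(s)/(1-s)\to 1$ (you can even start from $F_0(s)=1-s$ rather than computing $F_1$). What the paper's route buys is that the overlap lemma and its Corollary~\ref{corollary:overlap} are exactly the tools needed again in \S\ref{ssec:conditionalWeakConv} to complete Theorem~\ref{thm:occ3d}, so the machinery is not single-use; it also avoids the subsequence extraction and the power-series uniqueness step.
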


\begin{proof}
The random variable $M_{n} (j)$ counts the number of multiplicity$-j$
sites in generation $n$. The particles at such a site will either all
be descendants of a common first-generation particle or not; hence, by
conditioning on the first generation of the branching random walk we
may decompose $M_{n+1} (j)$ as follows:
\begin{equation} \label{eqn:mn}
    M_{n+1}(j)
    = \sum_{i = 1}^{Z_1} M_n^i (j) + A_{n+1}(j)-B_{n+1}(j)
\end{equation}
where (a) the random variables $\{M^{i}_{n} (j) \}_{i\leq Z_{1}}$
are independent copies of $M_{n} (j)$; (b) the error term
$A_{n+1}(j)$ is the number of multiplicity$-j$ sites at time $n+1$
with descendants of different particles in generation $1$; and (c)
the correction $B_{n+1} (j)$ equals
$$\aligned
  \sum_{x\in\mathcal{M}_{n+1}(j+)} & \# \mbox{ particles in generation }
  1 \\
   &\quad       \mbox{ with exactly }j \mbox{ descendants at } x \mbox{
          in generation
          } (n+1),
\endaligned
$$
where $\mathcal{M}_{n+1}(j+)$ is the set of sites with $(j+1)$ or
more particles in generation $(n+1)$. Obviously, $A_{n+1} (1)=0$,
because a site with only one particle cannot have descendants of
distinct first generation particles, and so it follows that $E
M_{n+1} (1) \leq E M_{n} (1)$. This implies that $\lim_n EM_n(1)$
exists.

To see that $\lim_{n \rightarrow \infty}EM_{n} (j)$ exists for
$j\geq 2$, observe that both $A_{n+1} (j)$ and $ B_{n+1} (j)$ are
bounded by the number of $(n+1)-$th generation particles at sites
with descendants of different particles of generation $1$. Hence, by
Lemma~\ref{lemma:dn}, writing $\mathcal{Z} (1)=\mathcal{Z}_{1}$ for
the first generation of the branching process,
\begin{align}\label{eqn:bn}
 E (A_{n+1} (j) +B_{n+1} (j) )
 &\leq
2E\sum_{u,v \in \mathcal{Z}(1)} D_n(u,v)\\
\notag &\leq 2\sum_{l=2}^{\infty} Q_l {l \choose 2} Cn^{-d/2} \\
\notag &\leq C'n^{-d/2},
\end{align}
for some $C'<\infty$, because the offspring distribution has finite
second moment.  Consequently, by equation~(\ref{eqn:mn}),
\begin{equation*}
 |EM_{n+1} (j) - E M_{n} (j)| =O(n^{-d/2}).
\end{equation*}
Since the sequence $n^{-d/2}$ is summable for $d\geq 3$, the
sequence $\{EM_{n} (j) \}_{n\geq 1}$ must converge. This proves
the convergence of means \eqref{eq:limMeans}.

Clearly, for each $n\geq 1$ it is the case that $\sum_{j}jEM_{n}
(j)=EZ_{n}=1$. Hence, to prove the equation
\eqref{eq:sumLimMeans}, it suffices to show that for every
$\varepsilon >0$ there exists an integer $k=k (\varepsilon)$ such
that for all $n\geq 1$,
\begin{equation}\label{eqn:tight}
    EY_{n} (k)\leq
    \varepsilon  \quad \text{where} \quad
    Y_{n} (k)=\sum_{j=k}^\infty j\cdot M_n(j)
\end{equation}
is the number of particles in generation $n$ located at sites with
at least $(k-1)$ other particles. Since $Y_{n} (k)\leq
Z_{n}I\{Z_{n}\geq k \}$, and since $EZ_{n}=1$, it is certainly the
case that for any fixed $n\geq 1$ and $\varepsilon >0$ there
exists $k =k (n;\varepsilon)$ so that inequality \eqref{eqn:tight}
holds; the problem is to prove that $k (\varepsilon)$ can be
chosen independently of $n$.  By the same reasoning as in relation
\eqref{eqn:mn} above, for all $n,k\geq 1$,
\begin{equation}\label{eq:YnIterate}
    Y_{n+1} (k)=\sum_{u\in \mathcal{Z} (1)}Y^{u}_{n} (k) +C_{n+1} (k)
\end{equation}
where the random variables $Y^{u}_{n} (k)$ are independent copies of
$Y_{n} (k)$ and the error term $C_{n+1} (k)$ is bounded by the total
number of particles in generation $n+1$ at sites with descendants of
at least two distinct particles in $\mathcal{Z} (1)$. Since
$EZ_{1}=1$, the decomposition \eqref{eq:YnIterate} implies that
\[
    |EY_{n+1} (k)-EY_{n} (k)|\leq EC_{n+1} (k).
\]
But by the same logic as in relation \eqref{eqn:bn} above, there exists
$C'<\infty$ independent of $k$ and $n$ such that $EC_{n+1} (k) \leq C'
n^{-d/2}$ for all $n,k\geq 1$. It follows that for sufficiently large
$n (\varepsilon)$ and all $k\geq 1$,
\[
    \sum_{n=n (\varepsilon)}^{\infty} EC_{n+1} (k) <\varepsilon.
\]
Thus, if for some $k\geq 1$ and $n=n (\varepsilon)$ the inequality
\eqref{eqn:tight} holds, then $EY_{n} (k)<2\varepsilon$ for all
$n\geq n (\varepsilon)$.  This proves \eqref{eq:sumLimMeans}.
\end{proof}

\begin{remark} Since the error term $C_{n+1} (k)$ in equation
\eqref{eq:YnIterate} is nonnegative, the expectations $EY_{n} (k)$
are nondecreasing in $n$. Because the offspring distribution is
nondegenerate, for every $k\geq 1$ there exists $n\geq 1$ such
that $Y_{n} (k)\geq 1$ with positive probability, which forces
$EY_{n} (k)>0$. Therefore, there are infinitely many integers
$j\geq 1$ such that $\kappa_{j}>0$.
\end{remark}

\subsection{Conditional weak convergence: Proof of
Theorem~\ref{thm:occ3d} }\label{ssec:conditionalWeakConv} In view of
Kolmogorov's estimate \eqref{eq:survivalToGenM}, the inequality
\eqref{eqn:tight} can be rewritten as
\[
    E \left( \sum_{j\geq k} jM_{n} (j)\,\bigg|\, G_{n} \right)\leq Cn\varepsilon
\]
for some constant $C<\infty$ not depending on $n$. Since
$\Omega_{n}=\sum_{j}M_{n} (j)$, it
follows that to prove Theorem~\ref{thm:occ3d} it suffices to prove
that for any finite $k\geq 1$,
\begin{equation}\label{eq:MWeak}
    \mathcal{L}\left(\left\{\frac{M_{n} (j)}{n} \right\}_{1\leq
    j\leq k}  \, \bigg\vert \, G_{n}\right)
    \Longrightarrow \mathcal{L} (
    \left\{\kappa_{j}Y \right\}_{1\leq j\leq k})
\end{equation}
where $Y$ is exponentially distributed with mean $2/\sigma^{2}$. For
this, we will use Yaglom's theorem, the convergence of moments
\eqref{eq:limMeans}, and a crude bound on the variance of $M_{n}
(j)$:
\begin{equation}\label{eq:varBound}
    \var (M_{n} (j))\leq EZ_{n}^{2}=1+n\sigma^{2}.
\end{equation}

Fix $1\leq m<n$, and for each particle $u\in \mathcal{Z}_{m}$ let
$M^{u}_{n-m} (j)$ be the number of sites that have exactly $j$
descendants of particle $u$ in generation $n$. The random variables
$M^{u}_{n-m} (j)$ are, conditional on $\mathcal{F}_{m}$, independent
copies of $M_{n-m} (j)$. Now $M_{n} (j)$ decomposes as
\begin{equation}\label{eq:MDecomp}
    M_{n} (j) =\sum_{u\in \mathcal{Z} (m)} M^{u}_{n-m} (j)+R_{n;m}
          \triangleq M^{*}_{n;m} (j)+R_{n;m}
\end{equation}
where  the remainder $R_{n;m}$ is bounded, in absolute value, by
the number of particles in generation~$n$ located at sites with
descendants of at least two distinct particles of generation
$m<n$. By Corollary~\ref{corollary:overlap},
\begin{equation}\label{eq:remainderEst}
    E ( |R_{n;m} |\,|\, \mathcal{F}_{m}) \leq CZ_{m}^{2}/ (n-m)^{d/2}.
\end{equation}
 By Yaglom's theorem, the conditional distribution of $Z_{m}/m$ given
the event $G_{m}$ of survival to generation $m$ converges to the
exponential distribution with mean $2/\sigma^{2}$; thus, if $m=m
(n)$ is chosen so that $m /n \rightarrow 1$ and
$n-m>n^{2/(d-\varepsilon)}$ for some $\varepsilon >0$, then the
bound in \eqref{eq:remainderEst} will be of order $o_{P} (n)$.  In
view of \eqref{eq:MDecomp} and Lemmas \ref{lemma:conditioning} and
\ref{lemma:timeJiggle}, it follows that to prove \eqref{eq:MWeak}
it suffices to prove the corresponding statement in which the
random variables $M_{n} (j)$ are replaced by the approximations
$M^{*}_{n;m} (j)$ in \eqref{eq:MDecomp}, and the conditioning
events $G_{n}$ are replaced by the events $H_{n}=\{Z_{m}\geq
\varepsilon_{n}n \}$. But this follows routinely by first and
second moment estimates: if the scalars $\varepsilon_{n}$ are
chosen so that $\varepsilon_{n} \rightarrow 0$ but
$n\varepsilon_{n}/ (n-m) \rightarrow \infty$, then by relation
\eqref{eq:limMeans} and the variance bound \eqref{eq:varBound},
\[
    E\left(Z_{m}^{-1} \sum_{u\in \mathcal{Z} (m)} M^{u}_{n-m} (j)
              \, \bigg\vert \,
              \mathcal{F}_{m}\right)\indic_{H_{n}}
    \longrightarrow \kappa_{j}\indic_{H_{n}}
\]
and
\[
    \var \left(Z_{m}^{-1} \sum_{u\in \mathcal{Z} (m)} M^{u}_{n-m} (j)
              \, \bigg\vert \,
              \mathcal{F}_{m}\right)\indic_{H_{n}}
             \leq \indic_{H_{n}}(1+ (n-m)\sigma^{2})/Z_{m}
             \longrightarrow 0.
\]
Chebychev's inequality now implies that the conditional distribution
of\\ $M^{*}_{n;m} (j) /Z_{m} $ given $H_{n}$ is concentrated in a
vanishingly small neighborhood of~$\kappa_{j}$ as $n \rightarrow
\infty$.  Since the conditional distribution of $Z_{m}/n $ given
$H_{n}$ converges to the exponential distribution with mean
$2/\sigma^{2}$ by Yaglom's Theorem and Lemmas
\ref{lemma:conditioning} and \ref{lemma:timeJiggle}, the desired
result follows. \qed

\section{Typical Sites in Dimension 2: Proof of Theorem
\ref{thm:ty2d}} \label{sec:ty2d}

\subsection{Embedded Galton-Watson tree}\label{ssec:embeddedGWTree}

For simplicity we shall consider only the binary case, that is, the
special case where the offspring distribution is the double-or-nothing
distribution $Q_{0}=Q_{2}=1/2$.  The arguments can all be easily
adapted to the general case, at the expense of notational complexity.

We begin with the simple observation that the branching random walk
can be constructed by first generating a Galton-Watson tree $\tau$
according to the given offspring distribution, then
\emph{independently} attaching to the edges of this tree random steps,
distributed uniformly on the set $\mathcal{N}$ of nearest neighbors of
the origin. The vertices of $\tau $ at height $n$ represent the
particles of generation $n$; the location in $\zz{Z}^{2}$ of a
particle $\alpha$ of the $n$th generation is obtained by summing the
random steps on the edges of the path in $\tau $ leading from the root
to $\alpha$.  Henceforth we will distinguish between the underlying
Galton-Watson tree $\tau $ and the \emph{marked} tree $\tau^{*}$
obtained by attaching step variables to the edges of $\tau$. Observe
that the conditional distribution of the marks of $\tau^{*}$ given the
tree $\tau$ is the product uniform measure on
$\mathcal{N}^{\mathcal{E} (\tau)}$, where $\mathcal{E} (\tau)$ denotes
the set of edges of $\tau$.

A \emph{typical particle} of the $n$th generation in a branching
random walk conditioned to survive to the $n$th generation can be
obtained by first choosing a tree $\tau$ randomly according to the
conditional distribution $F_{n}$ of the Galton-Watson tree given the
event of survival to generation $n$, then randomly selecting one of
the $Z_{n}\geq 1$ vertices at height $n$. For this random choice we
assume that the underlying probability space supports a
uniform-$[0,1]$ random variable $\gamma$ independent of all other
random variables used in the construction of the branching random
walk. Since this procedure does not use information about the step
variables attached to the edges of the tree, it follows directly
that the trajectory of the typical particle, conditional on the
underlying Galton-Watson tree, is a simple random walk started at
the origin.

\subsection{Reduction to the Size-Biased
Case}\label{ssec:reductionSB}

The strategy of the proof of Theorem~\ref{thm:ty2d} will be based on a
change of measure. Denote by $P_{H}=P_{H}^{n}$ the probability measure
that is absolutely continuous relative to $P$ with Radon-Nikodym
derivative
\begin{equation}\label{eq:rnDerivative}
    \frac{dP_{H}}{dP} = Z_{n} .
\end{equation}
The measure $P_{H}^{n}$ so defined is a probability measure, because
$EZ_{n}=1$. Call it the \emph{size-biased} measure. In the arguments
below the value of $n$ will be fixed, so we will generally omit the
dependence of the measure on $n$ and write $P_{H}=P_{H}^{n}$.
Because the Radon-Nikodym derivative depends only on the underlying
Galton-Watson tree $\tau$, which under $P$ is independent of the
marks, it follows that the conditional distribution under $P_{H}$ of
the marks given the tree $\tau$ is the same as under $P$. Thus, to
construct a version of the marked tree $\tau^{*}$ under $P_{H}$, one
may first build a size-biased version of the underlying
Galton-Watson tree, then attach edge marks independently according
to the (product) uniform distribution on $\mathcal{N}$. Henceforth
we will call such a marked tree a \emph{size-biased marked tree} or
a \emph{size-biased branching random walk}.

Observe that $P_{H}$ is also absolutely continuous relative to the
\emph{conditional} distribution $P^{*}_{n}$ of $P$ given the event
$G_n$ of survival to generation $n$; the Radon-Nikodym derivative
is
\begin{equation}\label{eq:condRN}
        \frac{dP_{H}}{dP^{*}_{n}} = Z_{n} \pi_{n}
\end{equation}
where $\pi_{n}=P (G_{n})\sim (2/n\sigma^{2})$. By Yaglom's theorem,
under $P^{*}_{n}=P (\cdot \,|\,G_n)$ the distribution of $dP_{H}^{n}/dP^{*}_{n}$
converges in law to the unit exponential distribution. This  implies
the following.

\begin{lemma}\label{lemma:reduction}
To prove Theorem~\ref{thm:ty2d} it suffices to prove the analogous
statements for the measure $P_{H}$, that is, to prove that (i) for
each $\varepsilon >0$ there exists $K<\infty$ such that
\begin{equation}\label{eq:obj(i)}
    P_{H} \{ T_{n}\geq K \log n\} <\varepsilon ;
\end{equation}
and (ii) for all sufficiently small $\varepsilon >0$ there exists
$\delta >0$ such that for all large $n$,
\begin{equation}\label{eq:obj(ii)}
    P_{H}\{ T_{n}\geq \varepsilon  \log n\} \geq \delta .
\end{equation}
\end{lemma}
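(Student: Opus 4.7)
The plan is to exploit the Radon-Nikodym identity $dP_H/dP_n^* = Z_n \pi_n$, which I'll call $\rho_n$, to convert $P_H$-probabilities into $P_n^*$-probabilities and vice versa. The essential point is that by Yaglom's theorem combined with $n\pi_n \to 2/\sigma^2$, the derivative $\rho_n$ converges in $P_n^*$-distribution to a unit exponential, and under $P_H$ the law of $Z_n/n$ converges to the size-biased Yaglom limit (a Gamma$(2,\sigma^2/2)$ law), both of which are tight and put no mass at $0$ or $\infty$. Note also that $\{T_n \geq c\} \subset G_n$ whenever $c>0$, so all the conditional probabilities in question are well defined.

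For part (i), I would fix a threshold $\eta > 0$ and decompose
\begin{equation*}
P_n^*\{T_n \geq K\log n\} \leq P_n^*\{T_n \geq K\log n,\; Z_n \geq \eta n\} + P_n^*\{Z_n < \eta n\}.
\end{equation*}
On $\{Z_n \geq \eta n\}$ one has $\rho_n \geq \eta n \pi_n$, so the first term is at most $P_H\{T_n \geq K\log n\}/(\eta n\pi_n)$, which is $O(\varepsilon/\eta)$ under hypothesis \eqref{eq:obj(i)}. By Yaglom's theorem the second term converges to $1 - e^{-\eta\sigma^2/2}$, which is small for small $\eta$. Choosing $\eta$ first, then applying the hypothesis with $\varepsilon \leq \eta^{2}$ to produce the corresponding $K$, yields $\limsup_n P_n^*\{T_n \geq K\log n\} \leq C\eta$, hence tightness of $T_n/\log n$ under $P_n^*$.

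For part (ii) the argument runs in the opposite direction, bounding $\rho_n$ from above instead of below. Because the size-biased Yaglom law is tight, $P_H\{Z_n > Mn\} \to 0$ as $M\to\infty$ uniformly in large $n$, so assuming \eqref{eq:obj(ii)} holds with level $\delta$, one can pick $M$ large enough that
\begin{equation*}
P_H\{T_n \geq \varepsilon \log n,\; Z_n \leq Mn\} \geq \delta/2
\end{equation*}
for all large $n$. On $\{Z_n \leq Mn\}$ one has $\rho_n \leq Mn\pi_n$, so the left-hand side is bounded by $Mn\pi_n \cdot P_n^*\{T_n \geq \varepsilon \log n\}$. Rearranging gives $\liminf_n P_n^*\{T_n \geq \varepsilon \log n\} \geq \delta\sigma^2/(4M) > 0$, which is the content of \eqref{eq:OrderLogn}.

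The only subtlety is to pin down the quantifiers correctly in each direction and to cite the right consequence of Yaglom's theorem---tightness of $Z_n/n$ bounded away from $0$ under $P_n^*$ in the first direction, tightness of its size-biased version under $P_H$ in the second. Neither is a real obstacle, since both limit laws are explicit; the lemma is essentially a bookkeeping statement about the size-biasing transformation.
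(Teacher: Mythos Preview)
Your proposal is correct and follows essentially the same approach as the paper: both rest on the fact that the Radon-Nikodym derivative $\rho_n = Z_n\pi_n$ converges in $P_n^*$-law (by Yaglom) and that consequently $1/\rho_n$ converges in $P_H$-law, so the two sequences of measures are mutually contiguous. The paper's proof is a single sentence stating exactly this, whereas you have unpacked the contiguity into an explicit truncation argument (cutting at $\{Z_n \geq \eta n\}$ in one direction and $\{Z_n \leq Mn\}$ in the other); this is precisely the standard way to make such a statement rigorous, and nothing is missing.
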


\begin{proof}
This is a direct consequence of the fact that the Radon-Nikodym
derivatives $dP_{H}/dP^*$ converge in law under $P^*$ as $n
\rightarrow \infty$, because this implies that the Radon-Nikodym
derivatives $dP^*/dP_{H}$ converge in law under~$P_{H}$.
\end{proof}

\subsection{Structure of the size-biased
process}\label{ssec:size-biasedStructure} The size-biased measure
$P_{H}$ on marked trees is especially well-suited to studying typical
points, and has been used by a number of authors (see
\cite{lyonsPemantlePeres} and the references therein) for similar
purposes. Consider first the distribution of the \emph{unmarked}
genealogical tree $\tau$ under $P_{H}$. According to
\cite{lyonsPemantlePeres}, a version of this random tree can be
obtained by running a certain \emph{Galton-Watson process with
immigration}. In the case of the double-or-nothing offspring
distribution, the nature of this process is especially simple:

\medskip \noindent \textbf{Recipe SB:} Each generation $j$ has
a single distinguished particle $v_{j}$ which gives rise to two
particles in generation $j+1$, one the distinguished particle
$v_{j+1}$, the other an undistinguished particle. All
undistinguished particles reproduce according to the
double-or-nothing law. For each $n$, the distinguished particle
$v_n$ is uniformly distributed on the particles in generation~$n$.

\qed

Thus, a version of the size-biased branching random walk, together
with a randomly chosen point $v_{n}$ of the $n$th generation, can be
built by attaching independent  step random variables to the edges of
the random tree built according to Recipe SB. Equivalently, this
process can be  constructed
using three independent sequences of auxiliary
random variables:
\begin{itemize}
   \item [($T_a)$] $\{S_{n} \}_{n\geq 0}$ is a simple random walk  in
$\zz{Z}^{2}$ with initial point $S_{0}=0$;
  \item [($T_b)$] $\{\xi_{i}
\}_{i\geq 0}$ are independent and  uniformly distributed on
$\mathcal{N}$;
   \item [($T_c)$] $\{U^{i}_{n} (x) \}_{i\geq 0}$ are independent
copies of the branching random walk $\{U_{n} (x) \}$ run according
to the law $P$; and
   \item [($T_d)$] $B_0\sim$ Bernoulli$(1/(2d+1))$.
\end{itemize}
(We emphasize that the auxiliary branching random walks $\{U^{i}_{n}
(x) \}_{i\geq 1}$ are run according to the original probability
measure $P$, not the size-biased measure $P_{H}$.) The size-biased
branching random walk is obtained by letting the ``typical'' particle
follow the trajectory $S_{j}$, then attaching an additional particle
to each point $(j,S_{j})$ visited by the typical particle, letting it
make a step to $S_{j}+\xi_{j}$, and then attaching the $j$th copy of
the branching random walk $U^{j}$ to this particle.

\begin{cor}\label{corollary:representationSB}
The distribution of $T_{n}$ under the size-biased measure
$P_{H}$ is the same as the distribution under $P$ of the random
variable
\begin{equation}\label{eq:representationSB}
    T^{*}_{n}= 1+ B_0 +  \sum_{j=0}^{n-2} U^{j}_{n-j-1} (S_{n} - S_{j}
    -\xi_{j}) .
\end{equation}
\end{cor}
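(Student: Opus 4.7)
The plan is to read the identity directly off Recipe SB together with the auxiliary-variable construction $(T_a)$--$(T_d)$; no estimation is involved, only careful bookkeeping of the spine decomposition, so the ``proof'' is almost entirely a translation.

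First I would note that under $P_{H}$ the typical particle of generation $n$ is the spine vertex $v_n$, and since the step marks on the spine edges are i.i.d.\ uniform on $\mathcal{N}$, its location is exactly the simple random walk $S_n$. Thus $T_n$ equals the total number of generation-$n$ particles sitting at $S_n$. I would then partition these particles according to the last spine ancestor they share with $v_n$: by Recipe SB, each spine particle $v_j$ for $0\le j\le n-1$ has exactly one undistinguished child $w_{j+1}$ in generation $j+1$, located at $S_j+\xi_j$, whose progeny evolves as an ordinary (\emph{not} size-biased) branching random walk, jointly independent across $j$ and independent of the spine. Together with $v_n$ itself, these account for all of generation $n$. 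Translation invariance of the branching random walk then shows that the number of descendants of $w_{j+1}$ sitting at $S_n$ in generation $n$ has the same distribution, conditionally on $(S,\xi)$, as $U^j_{n-j-1}(S_n-S_j-\xi_j)$, so
\[
T_n \;\stackrel{\mathcal{D}}{=}\; 1 + \sum_{j=0}^{n-1} U^j_{n-j-1}\bigl(S_n - S_j - \xi_j\bigr).
\]

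Finally I would isolate the $j=n-1$ term. Since $U^{n-1}_{0}(y)=\indic\{y=0\}$, that term equals $\indic\{S_n - S_{n-1} = \xi_{n-1}\}$; as the increment $S_n-S_{n-1}$ and $\xi_{n-1}$ are independent and each uniform on $\mathcal{N}$, the indicator has the Bernoulli$(1/(2d+1))$ law, which matches the distribution of $B_0$. Pulling this term out of the sum yields the stated formula for $T^*_n$.

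The one substantive ingredient, which I would invoke rather than re-derive, is the Lyons--Pemantle--Peres spine decomposition underlying Recipe SB: under $P_H$, the subtrees hanging off the undistinguished spine children are genuinely jointly independent ordinary branching random walks, independent of the spine itself (so that the $U^j$'s appearing above can indeed be taken to be the i.i.d.\ copies of $(T_c)$). This is the only place where the size-biased change of measure does real work; once it is in hand the corollary is immediate.
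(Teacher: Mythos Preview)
Your proposal is correct and matches the paper's approach exactly: the paper treats the corollary as immediate from Recipe~SB and the auxiliary-variable construction $(T_a)$--$(T_d)$, with only a one-line remark that $B_0$ ``accounts for the possibility that the sibling of the typical particle jumps to the same site as the typical particle.'' Your write-up simply fleshes out what the paper leaves implicit.

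One small point worth making explicit: to replace the $j=n-1$ term $\indic\{\xi_{n-1}=S_n-S_{n-1}\}$ by an \emph{independent} $B_0$, you need not only that its marginal law is Bernoulli$(1/(2d+1))$ but that it is independent of the remaining sum $\sum_{j=0}^{n-2}U^j_{n-j-1}(S_n-S_j-\xi_j)$. This is easy but deserves a sentence: conditional on the entire path $(S_0,\dotsc,S_n)$, the indicator is still Bernoulli$(1/(2d+1))$ because $\xi_{n-1}$ is uniform on~$\mathcal{N}$ independently of the walk, so the indicator is in fact independent of the whole path $S$; and it clearly does not involve any of the $\xi_j$ or $U^j$ for $j\le n-2$.
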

\qed

The Bernoulli random variable $B_0$ accounts for the possibility
that the sibling of the typical particle jumps to the same site as
the typical particle.

Reversing the random walk will not affect the distribution of the
random variable $T_{n}$, since the random walk is independent of all
other component variables of the representation
\eqref{eq:representationSB}, nor will reversing the indices of the
auxiliary branching random walks $U^{j}$. Thus, the following random
variable has the same distribution as that given by
\eqref{eq:representationSB}:
\begin{equation}\label{eq:theRepresentation}
        T^{**}_{n}= 1+ B_0 + \sum_{j=2}^{n}U^{j-1}_{j-1} (S_{j}+\xi_{j-1}).
\end{equation}

\subsection{Variances of the occupation random variables} Next we
focus on the distribution of the random variable $T^{**}_{n}$ defined by
\eqref{eq:theRepresentation}.  To obtain concentration results for this
distribution, we will need bounds on the second moments of the random
variables $U_{n} (x)$; for this, we use an exact formula for the
second moment of $U_{n} (x)$, valid in all dimensions:

\begin{prop}\label{proposition:un2}

\begin{equation}\label{eqn:un2nd}
   EU_n(x)^2=P_n(x)+\sigma^2\sum_{i=0}^{n-1}\sum_z P_i(z)P_{n-i}^2(x-z),
\end{equation}
\end{prop}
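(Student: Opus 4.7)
The plan is to obtain an explicit linear recurrence for $f_n(x):=EU_n(x)^2$ by conditioning on the first generation of the branching random walk, and then iterate. I would start by writing
\begin{equation*}
U_n(x) \;=\; \sum_{i=1}^{L} U^{(i)}_{n-1}(x - y_i),
\end{equation*}
where $L$ is the number of offspring of the ancestral particle (distributed as $\mathcal{Q}$, with mean $1$ and variance $\sigma^2$), $y_1,\ldots,y_L$ are i.i.d.\ uniform on $\mathcal{N}$ (the displacements of these offspring), and the $U^{(i)}_{n-1}$ are i.i.d.\ copies of $U_{n-1}$, independent of $(L, y_1,\ldots,y_L)$.

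Conditional on $(L, y_1, \ldots, y_L)$ the summands $U^{(i)}_{n-1}(x-y_i)$ are independent, so expanding the square yields a diagonal contribution $\sum_i EU_{n-1}(x-y_i)^2$ and an off-diagonal contribution $\sum_{i\ne j} P_{n-1}(x-y_i) P_{n-1}(x-y_j)$, the latter using $EU_{n-1}(\cdot)=P_{n-1}(\cdot)$ from \eqref{eq:fundamental}. Averaging over the uniform displacements converts these into applications of the Markov operator $\mathbb{P}$: the diagonal sum becomes $L\cdot \mathbb{P}f_{n-1}(x)$, and the off-diagonal sum becomes $L(L-1)\bigl(\mathbb{P}P_{n-1}(x)\bigr)^2 = L(L-1)P_n(x)^2$. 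Averaging finally over $L$, using $EL=1$ and $E[L(L-1)]=\sigma^2$, gives the recurrence
\begin{equation*}
f_n(x) \;=\; \mathbb{P}f_{n-1}(x) \,+\, \sigma^2 P_n(x)^2 \qquad (n\ge 1), \qquad f_0(x) = \mathbf{1}_{\{x=0\}}.
\end{equation*}

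Iterating this recurrence $n$ times produces $f_n = \mathbb{P}^n f_0 + \sigma^2 \sum_{i=0}^{n-1} \mathbb{P}^{i} P_{n-i}^2$. Applying $\mathbb{P}^n$ to the point mass $f_0$ gives $P_n(x)$, while $\mathbb{P}^{i} P_{n-i}^2(x) = \sum_z P_i(z) P_{n-i}^2(x+z) = \sum_z P_i(z) P_{n-i}^2(x-z)$, using the reflection symmetry $P_i(-z)=P_i(z)$ of the simple random walk. This is exactly \eqref{eqn:un2nd}. The argument is purely computational and presents no real obstacle; the only points needing care are identifying the off-diagonal count as $L(L-1)$ and using $E[L(L-1)]=\sigma^2$ (which follows from $EL=1$ and $\var(L)=\sigma^2$).
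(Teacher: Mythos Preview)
Your proof is correct and follows essentially the same approach as the paper: condition on the first generation to derive the linear recurrence $f_n(x)=\zz{P}f_{n-1}(x)+\sigma^2 P_n(x)^2$, then iterate. The only cosmetic difference is that the paper iterates starting from $f_1$ (writing $f_n=\zz{P}^{n-1}f_1+\sigma^2\sum_{i=0}^{n-2}\zz{P}^i g_{n-i}$) whereas you go one step further back to $f_0=\indic_{\{x=0\}}$; you also spell out explicitly the diagonal/off-diagonal decomposition and the identification $E[L(L-1)]=\sigma^2$, which the paper leaves implicit.
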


\begin{proof}
This is a special case of equation (81) in \cite{lalley07}, which
gives the $m$th moment for all integers $m\geq 1$. In the case $m=2$,
a simple proof can be given by conditioning on the first generation of
the branching random walk. Set $f_{n} (x) = EU_n(x)^2$ and $g_{n}
(x)=P_{n} (x)^{2}$; then conditioning on generation $1$ gives
\[
    f_{n} (x)=\zz{P}f_{n-1} (x)+\sigma^{2}g_{n} (x).
\]
Since the operator $\zz{P}$ is linear, this relation can be iterated
$n-1$ times, yielding
\[
    f_{n} (x)=\zz{P}^{n-1}f_{1} (x) +\sigma^{2}\sum_{i=0}^{n-2} \zz{P}^{i}g_{n-i} (x).
\]
This is equivalent to the  identity \eqref{eqn:un2nd}.
\end{proof}

\begin{remark}\label{rmk:2nd_moment}
If the offspring distribution has an exponentially decaying tail,
then one can deduce from (\ref{eqn:2d_mgf}) that $\sum_x
EU_n(x)^2\leq C\log n/\theta $. However, formula (\ref{eqn:un2nd})
implies that $\sum_x EU_n(x)^2 $  grows at rate  $\log n$, so
(\ref{eqn:2d_mgf}) cannot hold for large $\theta.$
\end{remark}

\subsection{Mean and variance estimates for
$T^{**}_{n}$}\label{ssec:concentration}The sum in the representation
\eqref{eq:theRepresentation} can be decomposed as $\Gamma_{n}+\Delta_{n} $,
where
\begin{align}\label{eq:GammaN}
    \Gamma_{n}:&= \sum_{i=2}^{n}P_{i} (S_{i}) \quad
    \text{and} \\
\label{eq:DeltaN}
    \Delta_{n} :&= \sum_{i=2}^{n} X_{i-1}
    \qquad \text{with} \quad
    X_{i-1}:=U^{i-1}_{i-1} (S_{i}+\xi_{i-1}) -P_{i} (S_{i}).
\end{align}

\begin{lemma}\label{lemma:GammaN}
Let $S_{n}$ be simple random walk in $\zz{Z}^{2}$, and let
$\Gamma_{n}$ be defined by \eqref{eq:GammaN}. Then
\begin{align}\label{eq:limEGamma}
    \lim_{n \rightarrow \infty} &\frac{E\Gamma_{n}}{\log n}=
        \frac{A}{2}
        \quad \text{and} \\
\label{eq:limVGamma}
    \lim_{n \rightarrow \infty}
    &\var \left(\frac{\Gamma_{n}}{\log n}\right)=0.
\end{align}
\end{lemma}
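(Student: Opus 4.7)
The mean calculation is direct. Since $P(S_i=x)=P_i(x)$, we have $E[P_i(S_i)]=\sum_x P_i(x)^2$, and by the symmetry $P_i(x)=P_i(-x)$ of the simple random walk combined with the Chapman--Kolmogorov identity,
\begin{equation*}
\sum_x P_i(x)^2=\sum_x P_i(x)P_i(-x)=P_{2i}(0).
\end{equation*}
In dimension $2$ the local CLT gives $P_{2i}(0)\sim A/(2i)$, so $E\Gamma_n=\sum_{i=2}^n P_{2i}(0)\sim (A/2)\log n$, which is \eqref{eq:limEGamma}.

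For \eqref{eq:limVGamma} I will prove the much stronger bound $\var(\Gamma_n)=O(\log n)$. Decompose $\var(\Gamma_n)$ into diagonal and off-diagonal pieces. The diagonal contribution $\sum_i \var(P_i(S_i))$ is controlled by $E[P_i(S_i)^2]=\sum_x P_i(x)^3\le P_i(0)\sum_x P_i(x)^2=O(1/i^2)$, summing to $O(1)$. For the off-diagonal piece with $i<j$, conditioning on $S_i$ and using that $S_j-S_i$ is independent of $S_i$ with distribution $P_{j-i}$ gives, via two applications of Chapman--Kolmogorov and symmetry,
\begin{equation*}
E[P_i(S_i)P_j(S_j)]=\sum_x P_i(x)^2\, P_{2j-i}(x).
\end{equation*}
Subtracting $E[P_i(S_i)]\,E[P_j(S_j)]=P_{2i}(0)P_{2j}(0)$ and using $\sum_x P_i(x)^2=P_{2i}(0)$ again to absorb the product, one gets the clean identity
\begin{equation*}
\cov(P_i(S_i),P_j(S_j))=\sum_x P_i(x)^2\bigl(P_{2j-i}(x)-P_{2j}(0)\bigr).
\end{equation*}

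I now split $P_{2j-i}(x)-P_{2j}(0)$ into a spatial part $P_{2j-i}(x)-P_{2j-i}(0)$ and a temporal part $P_{2j-i}(0)-P_{2j}(0)$, both estimated by the local CLT: the former is $O(|x|^2/(2j-i)^2)+O(1/(2j-i)^2)$, and the latter, via the mean-value theorem applied to $t\mapsto A/t$, is $O(i/j^2)$ for $i\le j$. Combined with the elementary moment estimate $\sum_x P_i(x)^2|x|^2\le P_i(0)\,E|S_i|^2=O(1)$ and $\sum_x P_i(x)^2=P_{2i}(0)=O(1/i)$, this yields $|\cov(P_i(S_i),P_j(S_j))|=O(1/j^2)$ uniformly in $i\le j$. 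Summing, $\var(\Gamma_n)\le O(1)+\sum_{j=2}^n j\cdot O(1/j^2)=O(\log n)$, and dividing by $(\log n)^2$ proves \eqref{eq:limVGamma}. The main technical obstacle is that naive bounds such as Cauchy--Schwarz give only $|\cov|=O(1/(ij))$, which sums to $\Theta(\log^2 n)$ --- exactly the rate we need to beat. The essential cancellation is captured by the identity above, which turns the covariance into an increment of $P_{2j-i}$, and extracting it requires using the local CLT's smoothness in both space and time simultaneously.
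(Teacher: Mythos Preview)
Your argument is correct and yields the sharper bound $\var(\Gamma_n)=O(\log n)$, but it takes a different route from the paper's. The paper does not try to estimate individual covariances precisely; instead it uses only the crude monotonicity bound $P_{2j-i}(x)\le P_{2j-i}(0)$ (Lemma~\ref{lemma:um0}) to get $E[P_i(S_i)P_j(S_j)]\le P_{2i}(0)\,P_{2j-i}(0)$, and then observes that the resulting double sum $2\sum_{i<j}P_{2i}(0)P_{2j-i}(0)$ has the same leading term $(A^2/4)\log^2 n$ as $(E\Gamma_n)^2$. The variance is then the difference of two quantities with the same leading asymptotics, hence $o(\log^2 n)$. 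This needs only the leading-order local CLT $P_m(0)\sim A/m$.

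Your approach instead writes the covariance exactly as $\sum_x P_i(x)^2\bigl(P_{2j-i}(x)-P_{2j}(0)\bigr)$ and extracts decay from the \emph{smoothness} of $P_m$ in both the spatial and temporal variables, via the local CLT with error term. This is a stronger analytic input (you need $P_m(x)=(A/m)e^{-\beta|x|^2/(2m)}+O(1/m^2)$ uniformly) but it pays off with an explicit rate. The trade-off is clear: the paper's argument is shorter and relies on less machinery, while yours is more quantitative and makes the cancellation mechanism transparent rather than leaving it implicit in the matching of leading constants. Either is perfectly adequate for~\eqref{eq:limVGamma}.
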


Recall that $A=5/(4\pi)$ is the  constant such that $P_n(0)\sim
A/n$.
\begin{proof}
By the symmetry of the simple random walk, $EP_{i} (S_{i})=P_{2i}
(0)\sim A/ (2i)$, and so the first convergence  \eqref{eq:limEGamma}
follows routinely.  To estimate the variance, first observe that
\begin{align}
\notag
    E\Gamma_{n}^{2}&= 2\sum_{i< j}EP_i(S_i)P_j(S_j)
                      +\sum_{i=2}^n E P_i(S_i)^2 \\
\label{eq:covSum}
                      &=2\sum_{i<j}EP_i(S_i)P_j(S_j) +O (1).
\end{align}
The second equation follows from the local central limit theorem
in $d=2$, which guarantees that $P_{i} (z)\leq C/i$ for some
constant $C<\infty$ independent of $i$ and $z$. Next, observe that
for $i<j$, by the symmetry of the random walk and the fact that
$P_{i} (z)$ is maximal at $z=0$ (Lemma~\ref{lemma:um0})
\begin{align}\label{eq:covTerm}
    EP_i(S_i)P_j(S_j)&=E(E(P_i(S_i)P_j(S_j)|S_i))\\
\notag &=EP_i(S_i)\sum_{x\in \mathbb{Z}^2} P_j(S_i+x)P_{j-i}(x)\\
\notag &=EP_i(S_i)P_{2j-i}(S_i)\\
\notag &\leq EP_i(S_i) P_{2j-i} (0)\\
\notag &= P_{2i} (0)P_{2j-i} (0).
\end{align}
Substituting this bound in \eqref{eq:covSum} and applying the local
central limit theorem (in the form $P_{n} (0)\sim A/n$) yields
\begin{align*}
        \sum_{i<j}EP_i(S_i)P_j(S_j)
             &\leq \sum_{j=2}^{n}\sum_{i<j}
              P_{2i} (0)P_{2j-i} (0) \\
            &\leq 2\sum_{j=2}^{n}\sum_{i<j}
             A^{2}/ (2i (2j-i)) + \text{error}\\
            &\sim \frac{A^{2}}{4}\log^{2}n +\text{error},
\end{align*}
where the error is of smaller order of magnitude. Together with
\eqref{eq:covSum} and \eqref{eq:limEGamma}, this shows that
\[
    \var \left({\Gamma_{n}}\right)
    =E\Gamma_{n}^{2} - (E\Gamma_{n})^{2}
    =o (\log n)^{2}.
\]
\end{proof}

\begin{lemma}\label{lemma:DeltaN}
Let $S_{n}$,  $U^{i}_{n} (x)$, and $\xi_{i}$ be
independent sequences of random variables satisfying the
hypotheses $(T_a)-(T_c)$ of section~\S\ref{ssec:size-biasedStructure}.
If $\Delta_{n}$ and $X_{i}$ are
defined as in equation~\eqref{eq:DeltaN}, then
\begin{equation}\label{eq:noCorrelation}
    EX_{i}=0 \quad \text{and} \quad EX_{i}X_{j}=0 \quad \text{for
    all} \; i\not =j.
\end{equation}
Consequently,
\begin{equation}\label{eq:varDeltaN}
 E \Delta_{n}=0 \quad \text{and} \quad
    \lim_{n \rightarrow \infty} \var\left(\frac{\Delta_n}{\log
    n}\right)
     =\frac{A^{2}}{8}.
\end{equation}
\end{lemma}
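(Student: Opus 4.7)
The plan is to prove the three claims \emph{in order}: the mean zero assertion, the orthogonality, and then the variance asymptotic. The first two are clean conditional expectation exercises using the fundamental identity $EU_n(x)=P_n(x)$ together with the one-step Markov identity $\mathbb{P}P_{n-1}=P_n$; the last needs the second-moment formula of Proposition~\ref{proposition:un2} together with the 2D local CLT.

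First I would prove $EX_{i-1}=0$ by a two-stage conditioning. Writing $X_{i-1}=U^{i-1}_{i-1}(S_i+\xi_{i-1})-P_i(S_i)$ and conditioning on $\sigma(S,\xi)$, independence and $EU_{i-1}(y)=P_{i-1}(y)$ give $E[X_{i-1}\mid S,\xi]=P_{i-1}(S_i+\xi_{i-1})-P_i(S_i)$. Conditioning further on $S$ and averaging $\xi_{i-1}$ uniformly over $\mathcal{N}$ yields $(2d+1)^{-1}\sum_{e\in\mathcal{N}}P_{i-1}(S_i+e)=P_i(S_i)$, which cancels the remaining term. For the orthogonality with $k<\ell$, let $\mathcal{G}$ be the $\sigma$-algebra generated by $S$, by $\xi_m$ for $m\neq\ell$, and by $U^m$ for $m\neq\ell$. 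Then $X_k$ is $\mathcal{G}$-measurable, while the computation just carried out (applied conditionally on $\mathcal{G}$, using independence of $(U^\ell,\xi_\ell)$ from $\mathcal{G}$) gives $E[X_\ell\mid\mathcal{G}]=0$. Hence $EX_kX_\ell=E[X_k\,E[X_\ell\mid\mathcal{G}]]=0$. The first half of \eqref{eq:varDeltaN}, namely $E\Delta_n=0$, is then immediate.

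For the variance, orthogonality reduces the problem to $\var(\Delta_n)=\sum_{i=2}^n EX_{i-1}^2$. Expanding the square and using the cross-term computation
\[
 E\bigl[U^{k}_{k}(S_{k+1}+\xi_k)\,P_{k+1}(S_{k+1})\bigr]
 =E\bigl[P_k(S_{k+1}+\xi_k)\,P_{k+1}(S_{k+1})\bigr]
 =E\bigl[P_{k+1}(S_{k+1})^2\bigr],
\]
one finds
\[
 EX_k^2=E\bigl[U^{k}_{k}(S_{k+1}+\xi_k)^2\bigr]-E\bigl[P_{k+1}(S_{k+1})^2\bigr].
\]
Since $S_{k+1}+\xi_k\stackrel{d}{=}S_{k+2}$, the first term equals $\sum_y P_{k+2}(y)\,EU_k(y)^2$, which I would evaluate via Proposition~\ref{proposition:un2}. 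The ``trivial'' contribution $\sum_y P_{k+2}(y)P_k(y)=P_{2k+2}(0)$ is $O(1/k)$, and the residual $E[P_{k+1}(S_{k+1})^2]=\sum_x P_{k+1}(x)^3$ is $O(1/k^2)$; both are summable to $O(\log n)$, hence negligible after dividing by $(\log n)^2$. The dominant contribution comes from the $\sigma^2$-term in Proposition~\ref{proposition:un2}: after the convolution identity $\sum_y P_{k+2}(y)P_{k-j}(y-z)=P_{2k+2-j}(z)$ and the sharp local CLT bound $P_{k-j}(y-z)^2\sim (A/(k-j))\,P_{k-j}(y-z)$ (in an $L^1$-averaged sense), one obtains
\[
 EX_k^2\sim \sigma^2\,A\sum_{j=0}^{k-1}\frac{P_{2k+2}(0)}{k-j}\sim\frac{A^2}{2k}\log k,
\]
using $\sigma^2=1$ in the binary case and $P_{2k}(0)\sim A/(2k)$. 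Summing, $\sum_{k=2}^n\frac{\log k}{2k}\sim(\log n)^2/4$, which delivers $\var(\Delta_n)\sim (A^2/8)(\log n)^2$, completing \eqref{eq:varDeltaN}.

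The main obstacle is the last step: extracting the precise constant $A^2/8$ from the double sum in Proposition~\ref{proposition:un2} after averaging against the walk's distribution. A straightforward $L^\infty$-bound on $P_{k-j}$ gives the right order $\log k/k$ but only an upper bound on the constant; to get equality one needs a careful local CLT analysis of $\sum_y P_{k+2}(y)P_{k-j}(y-z)^2$, with an error term that is uniform in $j$ and that integrates (in $j$ and in $z$, weighted by $P_j(z)$) to $o((\log n)^2)$. Routine Gaussian computation does this, but verifying uniformity in $j$ near the endpoints $j\approx 0$ and $j\approx k$ is the only delicate bookkeeping in the argument.
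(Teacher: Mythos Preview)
Your approach is essentially the paper's: the same conditioning arguments for $EX_i=0$ and $EX_iX_j=0$, the same reduction of $\var(\Delta_n)$ to $\sum_k EX_{k}^2$ via orthogonality, and the same appeal to the second-moment formula of Proposition~\ref{proposition:un2} plus the local CLT for the asymptotics.

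There is, however, a factor-of-two slip in the last step. Your heuristic ``$P_{m}(w)^2\sim (A/m)\,P_{m}(w)$ in an $L^1$-averaged sense'' is off by $2$: summing both sides over $w$ gives $P_{2m}(0)\sim A/(2m)$ on the left but $A/m$ on the right. The correct replacement, and the one the paper uses, is
\[
P_{m}(w)^2\;\sim\;\frac{A}{2m}\,P_{[m/2]}(w),
\]
reflecting the fact that squaring a Gaussian kernel halves its variance. If one traces your own arithmetic with your approximation, one gets $EX_k^2\sim (A^2/(2k))\log k$ as you wrote, and then $\sum_{k\le n}EX_k^2\sim (A^2/4)(\log n)^2$, not $(A^2/8)(\log n)^2$; so your final sentence does not follow from what precedes it. With the corrected approximation one obtains instead
\[
\sum_y P_{k+2}(y)\,P_{k-j}(y-z)^2\;\sim\;\frac{A}{2(k-j)}\,P_{k+2+[(k-j)/2]}(z),
\]
then $\sum_z P_j(z)\cdot(\,\cdot\,)=\frac{A}{2(k-j)}\,P_{(3k+j)/2+O(1)}(0)$, whose sum over $j$ is $\sim (A^2/(4k))\log k$, and finally $\sum_{k\le n}(A^2/(4k))\log k\sim (A^2/8)(\log n)^2$ as required.
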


\begin{proof}
To show that $E\Delta_{n}=0$ it suffices to show that $EX_{i}=0$. This
follows from the fundamental relation \eqref{eq:fundamental} by
conditioning on $S_{i+1}$ and $\xi_{i}$:
\begin{align*}
    EX_{i}&=EE (U^{i}_{i} (S_{i+1}+\xi_{i})\,|\, S_{i+1},
           \,\xi_{i}) -EP_{i+1} (S_{i+1})\\
           &=EP_{i}(S_{i+1}+\xi_{i})-EP_{i+1} (S_{i+1})=0.
\end{align*}
Now consider the covariances $EX_{i}X_{j}$. To compute these
expectations for $i<~j$, condition on the random variables
$S_{i+1}, S_{j+1},\{ U^{i}_{i} (x)\}_{x\in \zz{Z}^{2}}$, and
$\xi_{i}$ (but not $\xi_{j}$), and use the fundamental
identity~\eqref{eq:fundamental}: This implies that $EU_{j}
(x+\xi_{j})=P_{j+1} (x)$ for each $x\in \zz{Z}^{2}$, and so
\begin{align*}
    EX_{i}X_{j}&=EE (X_{i}X_{j}\,|\,\cdot)\\
              &=EX_{i}E (U^{j}_{j} (S_{j+1}+\xi_{j})-P_{j+1} (S_{j+1})\,|\, \cdot)\\
              &=EX_{i}\cdot 0 = 0.
\end{align*}
It follows that the variance of the sum $\Delta_{n}$ is the sum of
the variances of the increments $X_{i}$, and~so
\begin{align*}
    \var (\Delta_{n})&=\sum_{i=2}^{n}E X_{i-1}^{2}
         =\sum_{i=2}^{n} EX_{i-1}^{2}\\
         &=\sum_{i=2}^{n} (EU^{i-1}_{i-1} (S_{i}+\xi_{i-1})^{2}-EP_{i} (S_{i})^{2})\\
         &=\sum_{i=2}^{n} EU^{i-1}_{i-1}
         (S_{i}+\xi_{i-1})^{2} +O (1).
\end{align*}
Now by the second moment
formula~\eqref{eqn:un2nd},
\begin{align*}
    &EU^{i-1}_{i-1} (S_{i}+\xi_{i-1})^{2}\\
=&E\left(P_{i-1}(S_{i}+\xi_{i-1})+\sum_{j=1}^{i-1}\sum_z
P_j(z)^2P_{i-j-1}(S_{i}+\xi_{i-1}-z)\right)\\
=&EP_{i}(S_{i}) +\sum_{j=1}^{i-1}\sum_z P_j(z)^2\cdot
EP_{i-j}(S_i-z)\\
=&P_{2i}(0) +\sum_{j=1}^{i-1}\sum_z P_j(z)^2\cdot P_{2i-j}(z).
\end{align*}
The first term is of order $O (1/i)$. To estimate the second, observe
that by the local central limit theorem, for large $j$,
\[
    P_{j} (z)^{2}\sim\frac{A}{2j}P_{[j/2]} (z)
\]
where $[\cdot]$ denotes integer part and the relation holds
uniformly for $|z|\leq C\sqrt{j}$. Consequently, for large $i$,
\begin{align*}
    \sum_{j=1}^{i-1}\sum_z P_j(z)^2\cdot P_{2i-j}(z)
    &\sim \sum_{j=1}^{i-1} \frac{A}{2j}\sum_z
          P_{[j/2]} (z) P_{2i-j} (z)\\
    &= \sum_{j=1}^{i-1} \frac{A}{2j} P_{2i-j+[j/2]} (0)\\
    &\sim \sum_{j=1}^{i-1} \frac{A}{2j} \frac{A}{2i-j/2} \\
    &\sim \frac{A^{2}\log i}{4i}.
\end{align*}
Summing from $i=1$ to $n$ a shows that $
\var(\Delta_{n})\sim ( A^{2}/8)\log^{2}n$. This
proves~\eqref{eq:varDeltaN}.
\end{proof}

\subsection{Proof of Theorem~\ref{thm:ty2d}: Binary fission
case}\label{ssec:Th5DN} By Lemma~\ref{lemma:reduction}, it
suffices to prove assertions
\eqref{eq:obj(i)}--\eqref{eq:obj(ii)}. By
Corollary~\ref{corollary:representationSB}, the distribution of
$T_{n}$ under the size-biased measure $P_{H}$ is identical to the
distribution of the random variable
$T^{**}_{n}:=1+B_0+\tilde{T}_{n}$ under $P$, where $T^{**}_{n}$ is
defined by \eqref{eq:theRepresentation}. Finally, by Lemmas
\ref{lemma:GammaN} and \ref{lemma:DeltaN} (note that $E\Delta_n
\Gamma_n=0$),
\[
    E\tilde{T}_{n} \sim \frac{A}{2}\log n \quad \text{and} \quad
    \var (\tilde{T}_{n})\sim \frac{A^2}{8} \log^{2}n.
\]
The first of these implies, by the Markov inequality, that
$T^{**}_{n}=O_{P} (\log n)$. This proves the first assertion (i) of
Lemma~\ref{lemma:reduction}. The second assertion (ii)
is a consequence of the following elementary lemma (see, e.g.,
\cite{lawler07}, Lemma 12.6.1).

\begin{lemma}\label{lemma:meanVar}
If $X$ is a nonnegative random variable with positive, finite second
moment, then for any $\alpha \in [0,1]$,
\begin{equation}\label{eq:meanVar}
    P\{X\geq \alpha EX \}\geq (1-\alpha)^{2} (EX)^{2}/EX^{2}.
\end{equation}
\end{lemma}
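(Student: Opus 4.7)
The plan is to prove this via the standard Paley--Zygmund argument, which splits the expectation according to whether $X$ is above or below the threshold $\alpha E X$ and then applies the Cauchy--Schwarz inequality.

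First I would write the trivial decomposition
\[
    E X = E\bigl[ X \indic_{\{X < \alpha E X\}}\bigr] + E\bigl[ X \indic_{\{X \geq \alpha E X\}} \bigr].
\]
On the event $\{X < \alpha E X\}$ we have $X < \alpha E X$, so $E[X \indic_{\{X < \alpha E X\}}] \leq \alpha E X$, which forces
\[
    E\bigl[ X \indic_{\{X \geq \alpha E X\}} \bigr] \geq (1-\alpha) E X.
\]

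Next I would apply Cauchy--Schwarz to the left-hand side:
\[
    E\bigl[ X \indic_{\{X \geq \alpha E X\}} \bigr]
    \leq \bigl( E X^{2} \bigr)^{1/2} \bigl( P\{X \geq \alpha E X\} \bigr)^{1/2}.
\]
Combining the two inequalities and squaring then yields
\[
    (1-\alpha)^{2} (E X)^{2} \leq E X^{2} \cdot P\{X \geq \alpha E X\},
\]
which rearranges to the desired bound. The assumption that $E X^{2}$ is finite and positive is what allows us to divide through without issue (if $E X = 0$ the statement is trivial).

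There is essentially no obstacle here; the only subtlety is being careful with the inequality on $\{X < \alpha E X\}$ (which is why nonnegativity of $X$ is used, to ensure the other integrand is also nonnegative and can be discarded legitimately). The whole proof is a few lines and is a textbook application of Cauchy--Schwarz; indeed it appears as Lemma~12.6.1 of \cite{lawler07} as cited above.
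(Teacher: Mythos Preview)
Your argument is correct and is exactly the standard Paley--Zygmund proof; the paper itself does not prove this lemma but simply defers to the same reference (Lemma~12.6.1 of \cite{lawler07}) that you cite.
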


\qed

\section{Clustering in Dimension 2: Proof of
Theorem~\ref{thm:ball2d}}\label{sec:clustering}

\subsection{Occupied sites in the ball $B
(S_{n};\ell_n)$}\label{ssec:occSitesBall} We consider only the
case of binary fission. The proof of Theorem~\ref{thm:ball2d} in
this case, like that of Theorem~\ref{thm:ty2d}, is based on the
change of measure strategy outlined in section~\ref{ssec:reductionSB}.
In particular, we shall prove the corresponding assertions to
statements (A)-- (B) of Theorem~\ref{thm:ball2d} for the
\emph{size-biased} process of
section~\ref{ssec:size-biasedStructure}. Thus, assume throughout this
section that the random variables $S_{j}$, $U^{j}_{k}$, and $\xi_{j}$
are as in ($T_{a}$), ($T_{b}$), ($T_{c}$) of
section~\ref{ssec:size-biasedStructure}. Recall that the size-biased
branching random walk is obtained by letting the ``typical'' particle
follow the trajectory $S_{j}$, then attaching an additional particle
to each point $(j,S_{j})$ visited by the typical particle, letting it
make a step to $S_{j}+\xi_{j}$, and then attaching the $j$th copy of
the branching random walk $U^{j}$ to this particle. To prove
Theorem~\ref{thm:ball2d} it suffices to prove the following
proposition.

\begin{prop}\label{proposition:ClusterSB}
 Let $\{\ell_n\}$ be
any sequence of real numbers such that $\lim_n\ell_n=\infty$ and
$\lim_n \log \ell_n/\!\log n\!=\!0$. Let $B (S_{n};\ell_n)$ be the ball of radius
$\ell_n$ centered at $S_{n}$. Then for the size-biased branching
random walk,
\begin{enumerate}
\item [(A)] the number of \emph{unoccupied} sites in $B
(S_{n};\ell_n)$ is $o_{P} (\ell_n^2)$, and
\item [(B)] the number of particles in $B (S_{n};\ell_n)$ is of order
$O_p(\log n\cdot \ell_n^2)$.
\end{enumerate}
\end{prop}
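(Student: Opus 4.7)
The plan is to prove (A) and (B) directly in the size-biased representation of Section~\ref{ssec:size-biasedStructure}. Under that construction, for every $y \in \zz{Z}^2$,
\[
U_n^{\mathrm{SB}}(y) = \indic_{\{S_n = y\}} + \text{(sibling term)} + \sum_{j=0}^{n-2} U^j_{n-j-1}(y - S_j - \xi_j),
\]
where the $U^j$ are independent copies of the original BRW, independent of the typical trajectory $(S_j)$ and of the sibling steps $(\xi_j)$. Both assertions are then reduced to expectation bounds on functionals of this decomposition.

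Part (B) is a first-moment calculation. Summing the display above over $y \in B(S_n;\ell_n)$ and taking conditional expectations, the contribution from the $j$th side branch has expectation $P(W^j_{n-j-1} + S_j + \xi_j \in B(S_n;\ell_n))$, where $W^j$ is an independent simple random walk. Since $W^j_{n-j-1} - (S_n - S_j - \xi_j)$ has the law of a simple random walk at time $2(n-j)$, the local central limit theorem bounds this probability by $\min(1,\, C\ell_n^2/(n-j))$. Summing over $j$ by splitting at $n - j \asymp \ell_n^2$ and using $\log\ell_n = o(\log n)$ gives an overall bound of $O(\ell_n^2 \log n)$; Markov's inequality then yields (B).

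For (A), I would estimate $E[\Omega^{\mathrm{u}}_n]$, the expected number of unoccupied sites in $B(S_n;\ell_n)$. Fix $y = S_n + z$ with $|z| \leq \ell_n$. Conditional independence of the side branches gives
\[
P(U_n^{\mathrm{SB}}(y) = 0 \mid S, \xi) \leq \exp\Bigl(-\sum_{j=0}^{n-2} u_{n-j-1}(y - S_j - \xi_j)\Bigr),
\]
where $u_k(w) = P(U_k(w) \geq 1)$. Paley--Zygmund combined with the exact second moment of Proposition~\ref{proposition:un2} and the local CLT yields the lower bound $u_k(w) \geq c P_k(w)/\log k$ whenever $|w|^2 \lesssim k$. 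The convolution identity $E P_{n-j-1}(y - S_j - \xi_j) = P_{2(n-j)}(z)$ (already used in the proof of Lemma~\ref{lemma:DeltaN}) then gives, after restricting to indices $j$ with $n - j \geq \ell_n^2$,
\[
E\Bigl[\sum_{j:\, n-j \geq \ell_n^2} u_{n-j-1}(y - S_j - \xi_j)\Bigr] \;\gtrsim\; \sum_{m = \ell_n^2}^{n} \frac{1}{m \log m} \;\asymp\; \log\!\Bigl(\frac{\log n}{\log \ell_n}\Bigr) \to \infty.
\]
If the random exponent above is concentrated near its mean, then $P(U_n^{\mathrm{SB}}(y) = 0) = o(1)$ uniformly in $z$, hence $E[\Omega^{\mathrm{u}}_n] = o(\ell_n^2)$ and Markov's inequality gives (A).

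The main obstacle is precisely this concentration step: Jensen's inequality controls $E e^{-X}$ only in the wrong direction, so the expected-value estimate above does not, on its own, control $P(U_n^{\mathrm{SB}}(y) = 0)$. To close the gap I would show that $\mathrm{Var}\bigl(\sum_j u_{n-j-1}(y - S_j - \xi_j)\bigr) = o\bigl((\log(\log n/\log\ell_n))^2\bigr)$ via a covariance calculation that mirrors the bound~\eqref{eq:covSum}--\eqref{eq:covTerm} in Lemma~\ref{lemma:GammaN}, but with additional weights $1/(\log(n-i)\log(n-j))$ on the off-diagonal terms. Chebyshev would then force the exponent above half its mean with probability $1 - o(1)$, and the complementary contribution to $Ee^{-X}$ is bounded by $\exp(-\tfrac{1}{2}E X) = o(1)$, completing the proof.
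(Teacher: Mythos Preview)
Part~(B) is fine and matches the paper: the paper likewise computes $EW_n\sim(A/2)\pi\ell_n^2\log n$ via the decomposition~\eqref{eq:newGammaDelta} and concludes by Markov's inequality.

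For~(A), your reduction---the product formula for the vacancy probability and the Paley--Zygmund lower bound on $u_k$ (which is exactly Proposition~\ref{proposition:ulowerbound})---is the same starting point as in the paper. The concentration step, however, is misstated. You propose to bound $\mathrm{Var}\bigl(\sum_j u_{n-j-1}(\cdot)\bigr)$, but the functions $u_k$ have no convolution structure, so the covariances $\mathrm{Cov}(u_i(S_i'),u_j(S_j'))$ do not factor as in~\eqref{eq:covTerm}; nor is there any mechanism by which ``additional weights $1/(\log i\log j)$'' enter a covariance bound for $\sum u_i$. Those weights only appear if you pass to the \emph{lower bound} first: set $\tilde X=\sum_i P_i(\tilde S_i+z)/\log i$ (so that $\sum u_i\ge c\tilde X$ pointwise), and bound $\mathrm{Var}(\tilde X)$---a functional of the simple random walk alone---by the weighted analogue of~\eqref{eq:covSum}--\eqref{eq:covTerm}. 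That calculation can be carried out and yields $\mathrm{Var}(\tilde X)=o((E\tilde X)^2)$; Chebyshev applied to $\tilde X$ then forces the exponent $\sum u_i$ to infinity in probability, and the argument closes. With this correction your route works.

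The paper takes a shorter path that avoids the second-moment computation entirely. Call an index $i$ \emph{good} if $|S_{i+1}+\xi_i|\le\kappa\sqrt i$; on good indices the local CLT gives the \emph{deterministic} lower bound $P_i(x+S_{i+1}+\xi_i)\ge C'e^{-2\kappa^2}/i$, hence $u_i(\cdot)\ge C''/(i\log i)$. A single Chebyshev bound on the random walk makes $P(i\text{ is bad})$ uniformly small, and then Markov's inequality on the bad-index contribution to $\sum 1/(i\log i)$ (Lemma~\ref{lemma:good-bad}) forces $\sum_{\text{good }i}1/(i\log i)\to\infty$ with high probability. This truncation device trades your variance-plus-Chebyshev argument on $\tilde X$ for a first-moment argument on the random walk, and is somewhat cleaner.
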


The construction of section~\ref{ssec:size-biasedStructure} shows
(cf. formulas~\eqref{eq:representationSB} and
\eqref{eq:theRepresentation}) that the number of particles at location
$S_{n}+x$ in the $n$th generation of the size-biased branching random
walk is distributed as
\begin{equation}\label{eq:unStar}
    U^{**}_{n} (S_{n}+x):=\delta_{0} (x) + B_0\cdot\indic_{\{|x|\leq 1\}} + \sum_{j=1}^{n-1}
    U^{j}_{j} ( S_{j+1} +x    +\xi_{j}).
\end{equation}

\subsection{Vacant Sites: Proof of Theorem~\ref{thm:ball2d}
(A)}\label{ssec:vacancies} The representation
\eqref{eq:unStar} implies that the
probability that the site $x+S_{n}$ is unoccupied, that is, that
$U^{**}_{n} (x+S_{n})=0$, is equal to the
probability that none of the branching random walks $U^{i}_{i}$
succeeds in placing a particle at location $x$ at time $n$.  Since the
attached branching random walks are independent of the random walk
trajectory $\{S_{i} \}_{i\leq n}$ and the  displacement random variables $\xi_{i}$,
this probability is
\begin{equation}\label{eq:vacancyProb}
    P \{\text{site} \,(S_{n}+x) \, \text{vacant} \}
    =
    \prod_{i} \left(1-u_{i} (x+S_{i+1}+\xi_{i})\right)
\end{equation}
where $u_{n}$ is the \emph{hitting probability function}
\begin{equation}\label{eq:un}
    u_{n} (x):=P\{U_{n} (x)\geq 1 \}.
\end{equation}

\begin{prop}\label{proposition:ulowerbound}
There exists $C>0$ such that for all $n\geq 1$ and all sites $x\in
\zz{Z}^{2}$,
\begin{equation}\label{eq:ulowerbound}
    u_{n} (x)\geq \frac{P_{n} (x)}{C+A \log n}.
\end{equation}
\end{prop}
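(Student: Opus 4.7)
The natural approach is the Paley--Zygmund (second moment) inequality, which for any nonnegative integer-valued random variable $X$ gives $P(X \geq 1) \geq (EX)^2/E(X^2)$. Applied to $U_n(x)$, with the fundamental identity \eqref{eq:fundamental} supplying $EU_n(x) = P_n(x)$, the numerator becomes $P_n(x)^2$, so the entire task reduces to bounding $EU_n(x)^2$ above by $(C + A\log n)\, P_n(x)$.

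For the denominator I would invoke the exact second moment formula from Proposition \ref{proposition:un2}, namely
$$EU_n(x)^2 = P_n(x) + \sigma^2 \sum_{i=0}^{n-1} \sum_z P_i(z) P_{n-i}^2(x-z).$$
The key step in controlling the double sum is to factor $P_{n-i}^2(x-z)$ as $P_{n-i}(x-z) \cdot P_{n-i}(x-z)$ and to bound the second factor by $P_{n-i}(0)$, which is justified by Lemma \ref{lemma:um0} (equivalently, by the reflection argument of Remark \ref{remark:reflection}). The inner sum over $z$ then collapses by Chapman--Kolmogorov:
$$\sum_z P_i(z) P_{n-i}(x-z) = P_n(x),$$
so that the whole expression is at most $P_n(x) \sum_{i=0}^{n-1} P_{n-i}(0) = P_n(x) \sum_{k=1}^{n} P_k(0)$.

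The final step is to plug in the two-dimensional local central limit theorem in the form $P_k(0) \sim A/k$ with $A = 5/(4\pi)$, yielding $\sum_{k=1}^{n} P_k(0) = A\log n + O(1)$. Feeding this back into Paley--Zygmund gives
$$u_n(x) \geq \frac{P_n(x)^2}{P_n(x)\bigl(1 + \sigma^2 (A\log n + O(1))\bigr)},$$
which, after absorbing $\sigma^2$ (equal to $1$ in the binary-fission case under discussion) into the constant, is exactly the claimed bound.

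The only delicate point is keeping the coefficient of $\log n$ equal to $A$ rather than a larger unspecified multiple; a crude uniform estimate $P_{n-i}(0) \leq C/(n-i)$ would suffice to prove the weaker statement $u_n(x) \gtrsim P_n(x)/\log n$, but sharpening it to the coefficient $A$ uses the precise local CLT asymptotic, with the finite-order corrections collected into~$C$. No serious obstacle beyond this bookkeeping is expected.
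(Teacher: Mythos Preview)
Your proposal is correct and follows essentially the same route as the paper: Paley--Zygmund (which the paper phrases as ``Cauchy--Schwartz'' in the form $P\{X>0\}\geq (EX)^2/EX^2$), the exact second moment formula~\eqref{eqn:un2nd}, the domination $P_{n-i}(x-z)\le P_{n-i}(0)$ from Lemma~\ref{lemma:um0}, Chapman--Kolmogorov to collapse the $z$-sum, and finally the local CLT to identify the coefficient $A$. Nothing is missing.
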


\begin{proof}
By the fundamental identity, $EU_{n} (x)=P_{n} (x)$.
By the second moment formula \eqref{eqn:un2nd} of
Proposition~\ref{proposition:un2},
\begin{align}\label{eq:2ndMomentBound}
    EU_{n} (x)^{2}
    &=P_n(x)+\sum_{i=0}^{n-1}\sum_z     P_i(z)P_{n-i}^2(x-z) \\
\notag &  \leq P_n(x)+\sum_{i=0}^{n-1}\sum_z P_{i} (z)P_{n-i} (x-z) P_{n-i} (0)\\
\notag  &= P_{n} (x)+P_{n} (x)\sum_{i=0}^{n-1}P_{n-i} (0) \\
\notag  &\leq P_{n} (x) (C+A\log n).
\end{align}
Here we have used the fact (Lemma~\ref{lemma:um0}) that $P_{n-i} (x)$
is maximal at the origin $x=0$, together with a strong form of the
local central limit theorem (specifically, the fact that the error in
the local limit approximation is of order $O (n^{-2})$, which is
summable).  The result \eqref{eq:ulowerbound} now follows immediately
from the Cauchy-Schwartz inequality $P\{X>0 \}\geq (EX)^{2}/EX^{2}$,
valid for any nonnegative random variable $X$.
\end{proof}

The lower bound \eqref{eq:ulowerbound} leads easily to a useful
\emph{upper} bound for the probability that site
$x$ is vacant. Partition the indices $i\leq n$ into two sets,
the \emph{good} and the \emph{bad} indices, as follows: Fix a large
constant $\kappa <\infty$, and say that index $i$ is \emph{good} if
$|S_{i+1}+\xi_{i}|\leq \kappa \sqrt{i}$; say that $i$ is
\emph{bad} otherwise. By the local central limit theorem, there is a
constant $C'>0$ not depending on $\kappa $ such that for every good
index $i\geq |x|^{2}$,
\begin{equation}\label{eq:helpLLT}
    P_{i} (x+S_{i+1}+\xi_{i}) \geq C' e^{-2\kappa^{2}}/i.
\end{equation}
Thus, relations \eqref{eq:ulowerbound}--\eqref{eq:vacancyProb} and the
concavity of the logarithm function imply that for a suitable constant
$C''>0$ not depending on $\kappa $,
\begin{equation}\label{eq:vacancyProbBound}
       P \{\text{site} \,(S_{n}+x) \, \text{vacant} \}
    \leq
    \exp \left\{-C'' e^{-2\kappa^{2}}\!\!\!\!\sum_{i \,\text{\rm good}, \,
     |x|^{2}\leq i\leq n } \frac{1}{i\log i} \right\}.
\end{equation}

\begin{lemma}\label{lemma:good-bad}
Let $\{\ell_n\}$ be any sequence of real numbers such that
$\lim_n\ell_n=~\infty$ and $\lim_n \log \ell_n/\log n$ $=0$. Then
for every $b>0$ and  every $\varepsilon >0$ there exists $\kappa$
sufficiently large that
\begin{equation}\label{eq:good-bad}
  \limsup_n  P\left\{ \sum_{i \,\text{ \rm good}, \,
    \ell_n^{2}\leq i\leq n } \frac{e^{-2\kappa^{2}}}{i\log i}
    \leq b \right\}<\varepsilon.
\end{equation}
\end{lemma}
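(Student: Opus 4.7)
The plan is to show that most indices in $[\ell_n^2, n]$ are good with high probability (once $\kappa$ is large), and that the total weighted sum $S_n := \sum_{\ell_n^2 \le i \le n} 1/(i\log i)$ diverges; together these force the good portion of the weighted sum to grow without bound, which, combined with the positive constant $e^{-2\kappa^2}$, gives the result.

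First I would verify the divergence of $S_n$ by comparison with an integral: $S_n \sim \log\log n - \log\log(\ell_n^2) = \log\bigl(\log n / (2\log \ell_n)\bigr)$. The hypothesis $\log \ell_n / \log n \to 0$ gives $\log n / \log \ell_n \to \infty$, hence $S_n \to \infty$.

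Next I would obtain a uniform tail bound $P(i\text{ bad}) \le \eta(\kappa)$ with $\eta(\kappa)\downarrow 0$ as $\kappa\to\infty$. Since $\xi_i\in\mathcal N$ is bounded and $\mathrm{Var}(S_{i+1})$ is linear in $i$, Chebyshev's inequality yields, for all $i$ with $\kappa\sqrt i \ge 2$,
\[
P\bigl(|S_{i+1}+\xi_i|>\kappa\sqrt i\bigr)\le P\bigl(|S_{i+1}|>\kappa\sqrt i -1\bigr)\le \frac{C(i+1)}{(\kappa\sqrt i -1)^2}\le \frac{C'}{\kappa^2},
\]
with $C'$ independent of $i$ and $\kappa$; set $\eta(\kappa):=C'/\kappa^2$. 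By Fubini,
\[
E\!\!\sum_{i\text{ bad},\ \ell_n^2\le i\le n}\!\frac{1}{i\log i} \le \eta(\kappa)\,S_n + O(1) \le 2\eta(\kappa)\,S_n
\]
for all $n$ large enough (using $S_n\to\infty$ to absorb the $O(1)$). Markov's inequality then gives
\[
P\!\left(\sum_{i\text{ bad},\ \ell_n^2\le i\le n}\!\frac{1}{i\log i} > \frac{S_n}{2}\right) \le 4\eta(\kappa),
\]
so with probability at least $1-4\eta(\kappa)$ the good portion is $\ge S_n/2$. For fixed $\kappa$ the quantity $e^{-2\kappa^2}S_n/2$ diverges, so it eventually exceeds $b$, giving
\[
\limsup_n P\!\left(\sum_{i\text{ good},\ \ell_n^2\le i\le n}\frac{e^{-2\kappa^2}}{i\log i}\le b\right) \le 4\eta(\kappa).
\]
Given $\varepsilon>0$, choose $\kappa$ so large that $4\eta(\kappa)<\varepsilon$, completing the argument.

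The only potentially delicate point is the uniformity of the "bad'' probability over the summation range $i\in[\ell_n^2,n]$, but because the random walk has variance linear in $i$ and $\xi_i$ is bounded, Chebyshev gives the bound $C'/\kappa^2$ uniformly in $i$; no invariance principle or sharper CLT is needed. Everything else is a routine Markov-plus-divergence argument.
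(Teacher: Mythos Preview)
Your proof is correct and follows essentially the same route as the paper's: both establish that the total weighted sum $L_n=\sum_{\ell_n^2\le i\le n}1/(i\log i)$ diverges, use Chebyshev to bound $P(i\text{ bad})$ uniformly in $i$, and then apply Markov's inequality to show that the bad portion of the sum is at most a fixed fraction of $L_n$ with high probability, so that the good portion times $e^{-2\kappa^2}$ eventually exceeds $b$. The only cosmetic differences are that you track the explicit Chebyshev constant $C'/\kappa^2$ and fix $\varrho=1/2$, whereas the paper picks $\kappa$ so that $P(i\text{ bad})<\varepsilon^3$ and uses $\varrho=\varepsilon$.
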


\begin{proof}
The hypotheses regarding the growth of $\ell_n$ ensure that
\[
    L_{n}:=\sum_{i=\ell_{n}^{2}}^{n} 1/ (i \log i) \longrightarrow \infty .
\]
Hence, it suffices to show that for some $0<\varrho<1$, if $\kappa$ is
sufficiently large then
\begin{equation}\label{eq:good-bad-Obj}
    P\left\{ \sum_{i \,\text{bad}, \,
    \ell_n^{2} \leq i\leq n} \frac{1}{i\log i}  \geq \varrho
    L_{n}\right\}   <\varepsilon
\end{equation}
for all large $n$.  Recall that an index $i$ is \emph{bad} if
$|S_{i+1}+\xi_{i}|>\kappa \sqrt{i}$. Chebyshev's inequality implies
that for any $\varepsilon >0$, if $\kappa$ is sufficiently large
then $P\{ |S_{i+1}+\xi_{i}|>\kappa \sqrt{i}\}<\varepsilon^{3}$;
hence, for large $n$,
\[
    E \sum_{\ell_n^{2}\leq i\leq n } \frac{\indic_{\{|S_{i+1}+\xi_{i}|>\kappa
              \sqrt{i}\}}}{i\log i}
    \leq \varepsilon^{3}L_{n}.
\]
It now follows by the Markov inequality that
\begin{equation}\label{eq:badRW}
    P\left\{ \sum_{\ell_n^{2}\leq i\leq n } \frac{\indic_{\{|S_{i+1}+\xi_{i}|>\kappa
              \sqrt{i}\}}}{i\log i}  \geq \varepsilon
    L_{n}\right\} \leq \varepsilon^{2}.
\end{equation}
The relations \eqref{eq:badRW} clearly implies
\eqref{eq:good-bad-Obj}, and therefore prove \eqref{eq:good-bad}.
\end{proof}

\begin{proof}
[Proof of Proposition~\ref{proposition:ClusterSB}(A)] For any
$\varepsilon >0$, inequality \eqref{eq:vacancyProbBound} and
Lemma~\ref{lemma:good-bad} imply that for all large $n$, for any
displacement $x$ of magnitude~$\leq \ell_n$, the probability that
site $(x+S_{n})$ is vacant  is less than $2\varepsilon$. Therefore,
the expected number of vacant sites in the ball $B (S_{n};\ell_n)$
given the event $G_{n}$ is, for large $n$, no larger than $4\pi
\varepsilon \ell_{n}^{2}$. The assertion (A) of
Theorem~\ref{thm:ball2d} follows directly, by the Markov inequality.
\end{proof}

\subsection{Proof of Proposition~\ref{proposition:ClusterSB} (B)}\label{ssec:ball2d}
The second assertion (B) of Proposition~\ref{proposition:ClusterSB} can be proved in
virtually the same manner as Theorem~\ref{thm:ty2d}. Following is a
brief sketch. Set
\begin{equation}\label{eq:noInBall}
\aligned
    W_{n}:=\# &\text{ particles of generation} \;n
    \;\;\text{within
    distance}\; \ell_{n} \;\text{of}\;\; S_{n}\\
    &\text{in the size-biased BRW}.
\endaligned
\end{equation}
By representation~\eqref{eq:unStar},
\begin{equation}\label{eq:representationBall}
       W_{n}
   = 2+  \sum_{i=1}^{n-1} \sum_{|x|\leq \ell_n}
      U^i_i(x+S_{i+1}+\xi_i),
\end{equation}
where   $U^{i}_{j} (x)$, $S_{n}$, and $\xi_i$ satisfy
conditions ($T_{a}$)--($T_{c}$) of section
\S~\ref{ssec:size-biasedStructure}. The distribution
of the sum on the right side is analyzed by decomposing it as
$\Gamma_{n}+\Delta_{n}$, where now
\begin{align}\label{eq:newGammaDelta}
    \Gamma_{n}&:=\sum_{i=2}^n \sum_{|x|\leq \ell_n}
         P_i(x+S_i)\quad \text{and}\\
\notag
     \Delta_{n}&:= \sum_{i=2}^n  \left(\sum_{|x|\leq \ell_n}
           (U^{i-1}_{i-1}(x+S_{i}+\xi_{i-1})-P_i(x+S_i))\right).
\end{align}
By calculations similar to those used in proving
Lemmas~\ref{lemma:GammaN}, one shows that
\begin{align}\label{eq:meanVarEstsBall}
        \lim_{n \rightarrow \infty} &E\Gamma_{n}/ (\pi \ell_{n}^{2}\log
        n)=A/2;\\
\notag  \lim_{n \rightarrow \infty} &\var (\Gamma_{n})/ (\pi \ell_{n}^{2}\log n)=0;\\
\notag  \lim_{n \rightarrow \infty} &\var (\Delta _{n})/ (\pi
        \ell_{n}^{2}\log n)\leq A^{2}/8; \quad \text{and}\\
\notag  E\Delta_{n}&=0 \qquad  \text{for all} \; n\geq 1.
\end{align}
Given these estimates, one now obtains the desired conclusion, that
$W_{n}$ is of order $O_{P} (\ell_n^{2}\log n)$, by the
same simple argument as in section~\ref{ssec:Th5DN}.
\qed

\newpage

\section{Occupied Sites in Dimension 2}\label{sec:occ2d}

\subsection{Hitting probability function}\label{ssec:hittingProbs} For
simplicity we consider in this section only the binary fission case;
the case of a general offspring distribution with mean $1$ and finite
variance can be handled similarly. The proof of
Theorem~\ref{thm:occ2d} will be based on careful analysis of the
hitting probability function $u_{n} (x)$ defined by equation
\eqref{eq:un} above.  The connection with the total number
$\Omega_{n}$ of occupied sites at time $n$ is obvious:
$E\Omega_{n}=\sum_{x}u_{n} (x)$. Thus, our goal will be to bound the
function $u_{n}$ from \emph{above}. (A good \emph{lower} bound has
already been obtained in Proposition~\ref{proposition:ulowerbound}.)
Our main result is the following proposition.

\begin{prop}\label{proposition:uupperBound}
There exist constants  $C_1,C_2<\infty$ such that for all $n\geq
2$ and all sites $x\in \zz{Z}^{2}$,
\begin{equation}\label{eq:upperBoundu}
     u_{n} (x)\leq \frac{C_1}{n \log  n}\exp\left(-C_2
     \frac{|x|^2}{n}\right),
\end{equation}
and hence for some $C>0$ we have that
\begin{equation}\label{eq:upperBoundu}
    E\Omega_n=\sum_x u_{n} (x)\leq \frac{C}{ \log  n}.
\end{equation}
\end{prop}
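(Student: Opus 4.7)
The plan is to analyze the recursion
\begin{equation*}
 u_n(x) \;=\; P u_{n-1}(x) - \tfrac12 (P u_{n-1}(x))^2
\end{equation*}
for the hitting probability, obtained by conditioning on the first generation of the binary-fission BRW: with probability $1/2$ the ancestor has no offspring, while with probability $1/2$ both children initiate independent BRWs, and inclusion--exclusion gives $u_n(x)=\tfrac12(1-(1-Pu_{n-1}(x))^2)$. Here $P$ is the one-step simple random walk operator, so $PP_{n-1}=P_n$. Because $\phi(y)=y-y^2/2$ is increasing on $[0,1]$, any nonnegative $g_n$ satisfying $g_n\ge \phi(Pg_{n-1})$ together with $g_{n_0}\ge u_{n_0}$ at some initial $n_0$ will dominate $u_n$ for all $n\ge n_0$ by monotone induction. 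The proof thus reduces to producing a super-solution of the claimed form.

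The natural candidate is $g_n(x)=c\,P_n(x)/\log n$. Since $PP_{n-1}=P_n$, the super-solution inequality, after expanding $(\log(n-1))^{-1}-(\log n)^{-1}\sim 1/(n\log^2 n)$, reduces to the pointwise condition $c\,P_n(x)\gtrsim 2/n$. The local CLT $P_n(x)\sim (A/n)e^{-5|x|^2/(4n)}$ in dimension $2$ ensures this holds on the diffusive core $|x|^2\le K_0 n$, where $K_0=(4/5)\log(cA/2)$ can be made as large as desired by choosing $c$ large. Thus $u_n(x)\le c\,P_n(x)/\log n$, equivalently $u_n(x)\le C_1(n\log n)^{-1}e^{-C_2|x|^2/n}$, on this core.

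For the far tail $|x|^2\ge \kappa\,n\log\log n$ with $\kappa$ sufficiently large, the trivial bound $u_n(x)\le E U_n(x)=P_n(x)\le (A/n)e^{-C'|x|^2/n}$ already implies the desired estimate: the extra Gaussian factor absorbs the missing $1/\log n$ provided $C_2$ is chosen slightly smaller than $C'=5/4$. In the remaining annulus $K_0 n\le |x|^2\le \kappa n\log\log n$ a separate argument is required: one may either re-run the super-solution construction with a shifted kernel $P_{(1+\epsilon)n}(x)/\log n$ (whose slower Gaussian decay widens the range where the pointwise inequality holds), or patch the two regimes via a weighted minimum. Summing the resulting pointwise bound over $\zz{Z}^2$, with $\sum_x e^{-C_2|x|^2/n}\asymp n$, then yields $E\Omega_n\le C/\log n$.

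The principal obstacle is producing a super-solution that is uniformly valid across all spatial scales. Morally, the $1/\log n$ factor arises from balancing the nonlinear loss $-\tfrac12(Pu_{n-1})^2$ against the harmonic series $\sum_{j=1}^n j^{-1}\sim\log n$ generated by the recurrence of two-dimensional random walk (the same mechanism that gives $EU_n(x)^2\asymp P_n(x)\log n$ in Proposition~\ref{proposition:un2} and the matching lower bound of Proposition~\ref{proposition:ulowerbound}). Verifying this balance simultaneously at every $x$, with the correct Gaussian prefactor, is the delicate point; the case-split above is the most transparent route, though a single well-chosen super-solution would be conceptually cleaner.
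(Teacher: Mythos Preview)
Your framework—derive the recursion $u_{n+1}=\zz{P}u_n-\tfrac12(\zz{P}u_n)^2$ and dominate $u_n$ by a super-solution via the monotone comparison principle—is exactly the paper's. The gap is that your candidate super-solution does not work globally, and the comparison principle cannot be applied locally.

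Your $g_n(x)=cP_n(x)/\log n$ satisfies the super-solution inequality only where $cP_n(x)\gtrsim 2/n$, i.e.\ on $|x|^2\le K_0 n$. But the induction step of the comparison lemma reads $v_{n+1}(x)-u_{n+1}(x)\geq \zz{P}(v_n-u_n)(x)\bigl(1-\tfrac12\zz{P}(u_n+v_n)(x)\bigr)$, so nonnegativity at $x$ requires nonnegativity of $v_n-u_n$ at every neighbor; one failure propagates inward. Thus validity of the super-solution inequality on the core alone tells you nothing about $u_n$ there. Your patches do not repair this. The minimum of two super-solutions is indeed a super-solution, but $g_n$ is \emph{not} one, so taking $\min(g_n,\cdot)$ is not covered by that fact. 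And replacing $P_n$ by $P_{(1+\varepsilon)n}$ both breaks the identity $\zz{P}P_{m}=P_{m+1}$ on which your calculation rests (the kernel index advances by $1$ while your time label advances by $1+\varepsilon$) and, even ignoring that, only enlarges $K_0$ by the bounded factor $1+\varepsilon$, still leaving the annulus out to $|x|^2\asymp n\log\log n$ untreated.

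The paper's resolution is a single super-solution $v_n(x)=\dfrac{\kappa}{n\log n}\exp\Bigl(-\dfrac{\beta_n|x|^2}{2n}\Bigr)$ with a \emph{time-varying} exponent $\beta_n=\beta(1-1/\log n)$, $\beta=5/2$. With constant $\beta$ (your $g_n$, via the local CLT) the quantity $\beta\theta_n-\beta_n^2/n^2$ controlling the $|x|^2$-terms, where $\theta_n=\beta_n/n-\beta_{n+1}/(n+1)$, vanishes to leading order, which is exactly why your inequality fails off the core; the $1/\log n$ drift in $\beta_n$ injects an extra positive contribution of order $1/(n^2\log n)$ into this balance, and a lengthy case analysis (small, intermediate, and large $|x|$) then verifies the super-solution inequality for \emph{all} $x$ and all large $n$. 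The missing ingredient is therefore not a patch across regimes but this carefully tuned logarithmic drift in the Gaussian width.
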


Theorem~\ref{thm:occ2d} follows as a direct consequence of
\eqref{eq:upperBoundu} and Kolmogorov's estimate
\eqref{eq:survivalToGenM}.

To obtain upper bounds on the function $u_{n} (x)$, we will exploit
the fact that it satisfies a parabolic nonlinear partial difference
equation. Recall that $\zz{P}$ is the Markov operator for the simple
random walk, that is, for any bounded function $w:\zz{Z}^{2}
\rightarrow \zz{R}$,
\[
    \zz{P}w (x)= \frac{1}{5}\sum_{z-x\in \mathcal{N}} w (z).
\]

\begin{lemma}\label{lemma:hittingProbEqn}
Assume that the offspring distribution is double-or-nothing. Then
for each $n\geq 0$ and each $x\in \zz{Z}^{d}$,
\begin{equation}\label{eq:kpp}
    u_{n+1} (x)=\zz{P}u_{n} (x)-\frac{1}{2} (\zz{P}u_{n} (x))^{2}.
\end{equation}
\end{lemma}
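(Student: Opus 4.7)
The plan is to condition on the first generation of the branching random walk and use the very simple structure of the double-or-nothing offspring law.

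First I would write, for the initial particle at the origin, $u_{n+1}(x)=P\{U_{n+1}(x)\geq 1\}$, and split according to whether the initial particle produces $0$ or $2$ offspring. On the event $\{Z_1=0\}$ (probability $1/2$), we have $U_{n+1}(x)=0$ identically, so this contributes nothing. On the event $\{Z_1=2\}$ (probability $1/2$), let $Y_1,Y_2\in\mathcal{N}$ be the i.i.d.\ uniform displacements of the two children, and let $\widetilde U^{(1)},\widetilde U^{(2)}$ be two independent copies of the branching random walk started at the origin, run for $n$ more generations, independent of $(Y_1,Y_2)$. Then by translation invariance,
\begin{equation*}
U_{n+1}(x)\stackrel{d}{=}\widetilde U^{(1)}_n(x-Y_1)+\widetilde U^{(2)}_n(x-Y_2)
\end{equation*}
on this event.

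Next, I would compute the probability that $\{U_{n+1}(x)\geq 1\}$ by taking complements: using the independence of the two subtrees and of the $Y_i$'s,
\begin{equation*}
P\bigl(\widetilde U^{(1)}_n(x-Y_1)=0,\ \widetilde U^{(2)}_n(x-Y_2)=0\bigr)
= E\bigl[(1-u_n(x-Y_1))(1-u_n(x-Y_2))\bigr]
= \bigl(E[1-u_n(x-Y_1)]\bigr)^2.
\end{equation*}
By the reflection symmetry $y\mapsto -y$ of $\mathcal{N}$, the expectation $E\,u_n(x-Y_1)$ equals $\mathbb{P}u_n(x)$. Putting the two cases together,
\begin{equation*}
u_{n+1}(x)=\tfrac{1}{2}\bigl(1-(1-\mathbb{P}u_n(x))^2\bigr)=\mathbb{P}u_n(x)-\tfrac{1}{2}(\mathbb{P}u_n(x))^2,
\end{equation*}
which is the claimed identity.

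There is no real obstacle here: the proof is just a one-step conditioning argument, and the only feature of the binary-fission law that is used is that a particle with exactly $2$ offspring makes the two subtrees independent, so the generating-function-type cancellation $1-(1-p)^2=2p-p^2$ produces the quadratic nonlinearity. The lattice symmetry of $\mathcal{N}$ is what lets us rewrite the averaging over the single displacement as the Markov operator $\mathbb{P}$. For a general mean-one offspring law $\mathcal{Q}$ the same argument yields $u_{n+1}(x)=1-\Phi(1-\mathbb{P}u_n(x))$, of which \eqref{eq:kpp} is the double-or-nothing special case.
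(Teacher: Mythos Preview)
Your proof is correct and follows essentially the same approach as the paper: condition on the first generation, use independence of the two offspring subtrees, and identify the resulting average over the child's displacement with $\zz{P}u_n(x)$. The only cosmetic difference is that the paper computes the hitting probability directly as $2p(1-p)+p^2$ while you compute it via the complement $1-(1-p)^2$.
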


\begin{proof}
The event $\{U_{n+1} (x)>0 \}$ can only occur if the first
generation is nonempty, and hence consists of two particles with
locations in $\mathcal{N}$. This happens with probability $1/2$. One
or both of these particles must then engender a descendant branching
random walk that places a particle at site~$x$ in its $n$th
generation. Since the two descendant branching random walks are
independent, with starting points randomly chosen from
$\mathcal{N}$, this happens with probability $2p (1-p) +p^{2}$,
where $p=\zz{P}u_{n} (x)$.
\end{proof}

To extract information from the nonlinear difference equation
\eqref{eq:kpp} we will use the following standard comparison
principle. (Compare, for example, Proposition 2.1 of
\cite{aronson-weinberger}.)

\begin{lemma}\label{lemma:comparison}
Let $u_{n} (x)$ and $v_{n} (x)$ be functions taking values between $0$
and $1$ that satisfy the following conditions:
\begin{align}\label{eq:u}
    u_{n+1} (x)&=\zz{P}u_{n} (x)-\frac{1}{2} (\zz{P}u_{n} (x))^{2} \quad \text{and}\\
\label{eq:v}
    v_{n+1} (x)&\geq \zz{P}v_{n} (x)-\frac{1}{2} (\zz{P}v_{n} (x))^{2}.
\end{align}
If  $v_{0} (x)\geq u_{0} (x)$ for all $x$, then
\begin{equation}\label{eq:supersolution}
    v_{n} (x)\geq u_{n} (x) \quad \text{for all} \; n\geq 0
    \;\; \text{and} \;x\in \zz{Z}^{2}
\end{equation}
\end{lemma}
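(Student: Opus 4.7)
The plan is a straightforward induction on $n$, built around the monotonicity of the underlying nonlinearity. Set $f(p) := p - \tfrac{1}{2}p^{2}$; then $f'(p) = 1-p \geq 0$ on $[0,1]$, so $f$ is nondecreasing on the unit interval. Furthermore, $f$ maps $[0,1]$ into $[0,1/2] \subset [0,1]$, which will keep the iteration inside the range where monotonicity applies.

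The induction base is given: $v_0(x) \geq u_0(x)$ for all $x$. For the inductive step, assume $v_n(x) \geq u_n(x)$ for all $x$. Because $\mathbb{P}$ is an averaging (hence positivity-preserving and order-preserving) operator, $\mathbb{P}v_n(x) \geq \mathbb{P}u_n(x)$ for every $x$. Both quantities lie in $[0,1]$ by hypothesis on the values of $u_n, v_n$, so the monotonicity of $f$ on $[0,1]$ yields
\begin{equation*}
    f(\mathbb{P}v_n(x)) \geq f(\mathbb{P}u_n(x)).
\end{equation*}
Combining this with the defining equation \eqref{eq:u} for $u_{n+1}$ and the supersolution inequality \eqref{eq:v} for $v_{n+1}$ gives
\begin{equation*}
    v_{n+1}(x) \geq f(\mathbb{P}v_n(x)) \geq f(\mathbb{P}u_n(x)) = u_{n+1}(x),
\end{equation*}
completing the induction.

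There is essentially no obstacle here; the only point worth checking carefully is that the argument to $f$ stays in the interval of monotonicity. Since $u_n, v_n \in [0,1]$ by hypothesis and $\mathbb{P}$ preserves this range (as an average of values in $[0,1]$), both $\mathbb{P}u_n(x)$ and $\mathbb{P}v_n(x)$ are in $[0,1]$, and the step is valid. The proof is two or three lines once $f$ is identified as the scalar nonlinearity and its monotonicity on $[0,1]$ is noted; this is why the authors cite it as a standard comparison principle.
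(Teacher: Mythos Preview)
Your proof is correct and essentially the same as the paper's: both proceed by induction on $n$, using the positivity of $\zz{P}$ together with the fact that $p\mapsto p-\tfrac{1}{2}p^{2}$ is nondecreasing on $[0,1]$. The paper phrases this by subtracting the two relations and factoring $(\zz{P}v_{n})^{2}-(\zz{P}u_{n})^{2}=(\zz{P}v_{n}+\zz{P}u_{n})\zz{P}\Delta_{n}$, which is just the explicit version of your monotonicity observation.
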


\begin{proof}
Define $\Delta_{n} (x)=v_{n} (x)-u_{n} (x)$; then by the
hypotheses \eqref{eq:u}--\eqref{eq:v},
\begin{equation}\label{eq:delta}
    \Delta_{n+1} (x)\geq \zz{P}\Delta_{n} (x)- \frac{1}{2}
             (\zz{P}u_{n} (x) +\zz{P}v_{n} (x)) \zz{P}\Delta_{n} (x).
\end{equation}
Since $u_{n}$ and $v_{n}$ take values between $0$ and $1$, so does the
average $ (\zz{P}u_{n} +\zz{P}v_{n})/2$. Therefore, \eqref{eq:delta}
and the induction hypothesis imply
\[
    \Delta_{n+1} (x)\geq \zz{P}\Delta_{n} (x) \left(1- \frac{1}{2}
             \left(\zz{P}u_{n} (x) +\zz{P}v_{n} (x)\right)\right) \geq 0.
\]
\end{proof}

The trick is to find a function $v_{n}$ that satisfies inequality
\eqref{eq:v} and dominates $u_{0}$. To this end, fix $\kappa >0$
and define
\begin{equation}\label{eq:v_n}
 v_n(x)=\frac{\kappa}{n\log n}\exp\left(-\frac{\beta_n
  |x|^2}{2n}\right),
\end{equation}
where
\[
    \beta_n=\beta \left(1 - \frac{1}{\log n}\right)\quad
    \text{and}
    \quad
    \beta=5/2.
\]

\begin{lemma}\label{lemma:super_sol}
There exist $N_0\in\zz{N}$ and $\kappa_0$ \textbf{independent
of $N_0$} such that for all $\kappa\geq \kappa_0$ and $n\geq N_0$,
\begin{equation}\label{eq:v_indc}
    v_{n+1} (x)\geq \zz{P}v_{n} (x)\left(1-\frac{1}{2} \zz{P}v_{n} (x)\right).
\end{equation}
\end{lemma}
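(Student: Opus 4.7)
The plan is to prove \eqref{eq:v_indc} by direct Taylor expansion of both $v_{n+1}(x)/v_n(x)$ and $\mathbb{P}v_n(x)/v_n(x)$ in powers of $1/n$. The leading-order balance will fix $\beta=5/2$, and the residual deficit---produced by the $1/\log n$ correction in $\beta_n$ and by the $\log\log n$ correction in the normalization---will be absorbed by the nonlinear term $\tfrac{1}{2}(\mathbb{P}v_n)^2$ once $\kappa$ exceeds a universal threshold. Heuristically, $v_n$ is a discrete heat-kernel-like object for simple random walk on $\mathcal{N}$ (whose step distribution has per-coordinate variance $2/5$), with an additional $1/\log n$ factor of time decay---precisely the extra ``room'' needed to accommodate the nonlinearity in \eqref{eq:kpp}.

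Write $s:=|x|^2/n$ and $\tau:=\beta_n/(2n)$. First, I would Taylor-expand logarithms to get
\[
\log\frac{v_{n+1}(x)}{v_n(x)} = -\frac{1}{n}\Bigl(1+\frac{1}{\log n}\Bigr) + \frac{5s}{4n}\Bigl(1-\frac{1}{\log n}\Bigr) + O\!\left(\frac{1+s^2}{n^2}\right).
\]
Using the Gaussian form of $v_n$, one has
\[
\frac{\mathbb{P}v_n(x)}{v_n(x)} = \frac{1}{5}\bigl[1 + 2e^{-\tau}(\cosh(2\tau x_1)+\cosh(2\tau x_2))\bigr],
\]
and expanding in $\tau$ yields
\[
\log\frac{\mathbb{P}v_n(x)}{v_n(x)} = -\frac{1}{n}\Bigl(1-\frac{1}{\log n}\Bigr) + \frac{5s}{4n}\Bigl(1-\frac{1}{\log n}\Bigr)^{\!2} + O\!\left(\frac{1+s^2}{n^2}\right).
\]
The leading $-1/n$ terms cancel in the difference precisely when $2\beta/5=1$, which is what forces $\beta=5/2$. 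Subtracting the two expansions gives
\[
\log\frac{v_{n+1}(x)}{\mathbb{P}v_n(x)} = \frac{1}{n\log n}\left[-2 + \frac{5s}{4}\Bigl(1-\frac{1}{\log n}\Bigr)\right] + O\!\left(\frac{1+s^2}{n^2}\right),
\]
from which $\mathbb{P}v_n-v_{n+1} \leq \frac{v_n}{n\log n}\bigl[2-\tfrac{5s}{4}(1-\tfrac{1}{\log n})\bigr]_{+}\cdot(1+o(1))$.

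It now remains to verify $\mathbb{P}v_n-v_{n+1}\leq\tfrac{1}{2}(\mathbb{P}v_n)^2$. Setting $u:=\tfrac{5s}{4}(1-\tfrac{1}{\log n})$ and using $\mathbb{P}v_n\sim v_n = \tfrac{\kappa}{n\log n}e^{-u}$, the inequality reduces (for $u\leq 2$; it is automatic for $u>2$) to the scalar condition
\[
\kappa\, e^{-u} \;\geq\; 2(2-u),\qquad u\in[0,2].
\]
An elementary calculus argument shows $\max_{u\in[0,2]} 2(2-u)e^u = 2e$, attained at $u=1$, so any $\kappa_0>2e$ works. Because this threshold comes from a universal one-variable optimization that is insensitive to $n$, $\kappa_0$ may be chosen independently of $N_0$, as the lemma demands.

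The principal obstacle is uniform control of the Taylor remainders. In the bulk $|x|\leq n^{3/4}$ (so $s\leq\sqrt{n}$), the errors above are $O(v_n/n^2)$, genuinely smaller than the $O(v_n/(n\log n))$ main terms once $n\geq N_0$. Outside the bulk, the $\cosh$ expansion is inadmissible, so a separate argument is required; however, since $\beta_{n+1}/(n+1)<\beta_n/n$, the Gaussian in $v_{n+1}$ is flatter than that in $v_n$, and one can show directly that $v_{n+1}(x)/v_n(x)\asymp\exp(5|x|^2/(4n^2))$ outgrows $\mathbb{P}v_n(x)/v_n(x)$ (whose dominant contribution $\asymp\exp(\beta_n|x|_{\infty}/n)/5$ comes from the neighbor of $x$ nearest the origin). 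The resulting inequality $v_{n+1}(x)\geq\mathbb{P}v_n(x)$ for $|x|>n^{3/4}$ is unconditional, making \eqref{eq:v_indc} trivial in the tail. Splicing the two regimes completes the proof.
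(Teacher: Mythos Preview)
Your overall plan is the paper's plan: Taylor-expand, note that $\beta=5/2$ forces the leading $1/n$ cancellation, and let the quadratic term $\tfrac12(\zz{P}v_n)^2$ absorb the residual $O(1/(n\log n))$ deficit near the origin. The reduction to the scalar inequality $\kappa e^{-u}\ge 2(2-u)$ is correct and matches the paper's step~(i), where the nonlinear term is invoked only for $|x|\le\sqrt{10n}$ and the threshold $\kappa_0$ is likewise a universal number.

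However, your two-region splitting does not close. First, the bulk error control is miscomputed: with $s=|x|^2/n\le\sqrt n$, your own remainder $O((1+s^2)/n^2)$ in the log-ratio is $O(1/n)$, hence the induced error in $\zz{P}v_n-v_{n+1}$ is $O(v_n/n)$, not $O(v_n/n^2)$; this swamps the $O(v_n/(n\log n))$ main term unless $s$ stays bounded. Second, your tail claim that $\zz{P}v_n/v_n\asymp \exp(\beta_n|x|_\infty/n)/5$ for $|x|>n^{3/4}$ is false: for $|x|=o(n)$ all five neighbor contributions are $1+o(1)$ and $\zz{P}v_n/v_n\to1$, not $1/5$. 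The comparison $\exp(5|x|^2/(4n^2))$ versus $\exp(\beta_n|x|_\infty/n)/5$ only becomes meaningful when $|x|\asymp n$, and even there (e.g.\ $x=(n/2,0)$) one checks $v_{n+1}/v_n\approx e^{5/16}\approx1.37$ while $\zz{P}v_n/v_n\approx1.36$, so the crude one-term bound on $\zz{P}v_n$ is inadequate.

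The genuine work you are missing is precisely the intermediate range $\sqrt{n}\lesssim|x|\lesssim n$. The paper handles it in two further sub-regimes: for $\sqrt{10n}<|x|<\delta n$ one must keep the quartic terms and verify the signed inequality $\tfrac{5}{8}\theta_n^2|x|^4>\tfrac{1}{12}\beta_n^4(x_1^4+x_2^4)/n^4$ (this is where $\beta=5/2$ is used again), and for $\delta n\le|x|\le 3n$ one reduces to the one-variable fact that $f(y)=5e^{y^2/5}-3-e^y-e^{-y}>0$ for $y>0$, after first bounding $\cosh(\beta_n x_1/n)+\cosh(\beta_n x_2/n)\le 1+\cosh(\beta_n|x|/n)$. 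Only for $|x|\ge 3n$ does the crude exponential comparison you sketched actually work. Without these steps the splice between your bulk and tail leaves the range $|x|\asymp n$ uncovered.
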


The (rather technical) proof is deferred to section
\S\ref{ssec:super-solution} below.  (See \cite{bramson-cox-greven}
for a similar argument in the context of the KPP equation.) Given
Lemma~\ref{lemma:super_sol}, Proposition
\ref{proposition:uupperBound} is an easy consequence.

\begin{cor}\label{corollary:super-solution}
There exist $N_1\in\zz{N}$ and $\kappa>0$
such that for all $n\geq 0$,
\begin{equation}\label{eq:super-solution}
   u_n(x)\leq v_{N_1+n}(x)\leq 1.
\end{equation}
\end{cor}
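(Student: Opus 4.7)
The plan is to apply the comparison principle of Lemma~\ref{lemma:comparison} to the pair $(u_n, \tilde v_n)$, where $\tilde v_n(x) := v_{N_1+n}(x)$ is a time-shifted copy of the candidate super-solution \eqref{eq:v_n}. Once the hypotheses of that lemma are verified, it immediately yields $u_n(x) \leq v_{N_1+n}(x)$ for all $n \geq 0$ and $x \in \mathbb{Z}^2$, and the upper bound $v_{N_1+n}(x) \leq 1$ will come as a byproduct of the verification.

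The parameters will be chosen as follows. Let $N_0$ and $\kappa_0$ be as in Lemma~\ref{lemma:super_sol}, pick $N_1 \geq N_0$ large enough that $N_1 \log N_1 \geq \kappa_0$, and set $\kappa := N_1 \log N_1$ (so automatically $\kappa \geq \kappa_0$). I would then check the four hypotheses of Lemma~\ref{lemma:comparison} in turn: (a) since the Gaussian factor in \eqref{eq:v_n} is maximised at $x=0$, we have $\tilde v_n(x) \leq v_{N_1+n}(0) = \kappa/((N_1+n)\log(N_1+n)) \leq \kappa/(N_1\log N_1) = 1$, so $\tilde v_n$ takes values in $[0,1]$; (b) the recursion $u_{n+1} = \mathbb{P}u_n(1-\tfrac{1}{2}\mathbb{P}u_n)$ is exactly Lemma~\ref{lemma:hittingProbEqn}; (c) the super-solution inequality for $\tilde v_n$ is Lemma~\ref{lemma:super_sol} applied at index $N_1+n \geq N_0$, using $\kappa \geq \kappa_0$; (d) the initial comparison $\tilde v_0(x) \geq u_0(x) = \indic_{\{x=0\}}$ reduces to $v_{N_1}(0) \geq 1$, which holds with equality by the choice $\kappa = N_1\log N_1$.

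The one delicate point, though not a genuine obstacle, is that $v_{N_1}(0) = \kappa/(N_1\log N_1)$ is pinned down to be exactly $1$: it must be at least $1$ to dominate $u_0$, and at most $1$ so that $\tilde v_n$ remains in $[0,1]$ as Lemma~\ref{lemma:comparison} requires. This rigidity forces the relation $\kappa = N_1\log N_1$, and the inequality $\kappa \geq \kappa_0$ then dictates that $N_1$ be taken large. The genuinely non-trivial content of Corollary~\ref{corollary:super-solution} is entirely contained in Lemma~\ref{lemma:super_sol}, whose proof is deferred; conditional on that lemma, the corollary is a direct application of the comparison principle.
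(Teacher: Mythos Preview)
Your proposal is correct and matches the paper's proof essentially line for line: the paper also sets $\kappa := N_1\log N_1$ with $N_1\geq N_0$ chosen so that $\kappa\geq\kappa_0$, observes that $u_0(x)=\indic_{\{x=0\}}\leq v_{N_1}(x)\leq 1$, notes that $\tilde v_n := v_{N_1+n}$ satisfies the super-solution inequality by Lemma~\ref{lemma:super_sol}, and then invokes Lemma~\ref{lemma:comparison}. Your discussion of why $\kappa$ must equal $N_1\log N_1$ exactly is a nice observation that the paper leaves implicit.
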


\begin{proof}
Choose $N_1\geq N_0$ such that $\kappa:=  N_1\log N_1\geq \kappa_0$.
For such a choice of $(N_1,\kappa)$ we have
\[
  u_0(x)=\indic_{\{x=0 \}}\leq v_{N_1}(x)\leq 1.
\]
Moreover, by Lemma \ref{lemma:super_sol}, the function
$\tilde{v}_n(x):=v_{n+N_1}(x)$ satisfies \eqref{eq:v_indc}. The
conclusion now follows from the Comparison
Lemma~\ref{lemma:comparison}.
\end{proof}

\subsection{Representation of the conditional
distribution}\label{ssec:representationUn} Revesz \cite{Revesz96}
considers a branching random walk on $\zz{R}^{d}$ that is identical to
the branching random walk we have studied, \emph{except} that the
particle motion is by Gaussian $N (0,I)$ increments rather than
Uniform-$\mathcal{N}$ increments.  One of the main results of
{\Revesz}'s article asserts that, conditional on the event that there
is at least one particle of the $n$th generation in the ball $B$ of
radius $\varrho = \pi^{-1/2}$ centered at the origin, the expected
total number of such particles is of order $\Theta (\log n)$. His
argument seems to rest on the (unproven) assertion (see the first two
sentences of his \emph{Proof of Theorem 3}) that conditional on
the event that a region $C$ is occupied by at least one particle at
time $t$, the branching random walk consists of a single pinned random
walk off of which independent branching random walks are thrown.
There is no proof of this assertion (in fact, it is not even stated
clearly, as far as we can see).

We believe that Revesz' assertion is false.  The purpose of this
section is to give a representation related to that of Revesz' for the
conditional law of the occupation random variable $U_{n}(x)$ given the
event
\[
     G_{n,x}:=\{U_{n} (x)>0 \}.
\]
This representation is similar to  Revesz' in that it consists
of independent branching random walks thrown off a random path from
$(0,0)$ to $(n,x)$; however, the distribution of the random path is
\emph{not} that of a pinned simple random walk, but rather that of a
$u-$\emph{transformed} simple random walk. This is defined as
follows:

\begin{dfn}\label{definition:uSRW}
For each site $x$ and integer $n\geq 1$ such that $u_{n} (x)>0$, the
$u-$\emph{transformed simple random walk with endpoint} $(n,x)$ is the
$n-$step, time-inhomogeneous Markov chain $\{X_{m} \}_{0\leq m\leq n}$
on $\zz{Z}^{d}$ with initial point $0$ and transition probabilities
\begin{equation}\label{eq:uTransformTPs}
    q_{m} (z,y):= P (X_{m}=y\,|\, X_{m-1}=z)=
           P_{1} (y-z) \frac{u_{n-m} (x-y)}{\zz{P}u_{n-m} (x-z)}.
\end{equation}
\end{dfn}

\begin{remark}\label{remark:doob}
Except in the trivial case $n=1$, a $u-${transformed} random walk is
\emph{not} a Doob $h-$process, because the hitting probability
function $u_{n} (x)$ is not space-time harmonic for the simple
random walk, by equation~\eqref{eq:kpp} above. But a pinned random
walk \emph{is} an $h-$process: In particular, the one-step
transition probabilities of a pinned random walk conditioned to end
at~$x_{n}$ are given by
\begin{equation}\label{eq:rnPinned}
    q^{*}_{m} (z,y)= P_{1} (y-z) \frac{P_{n-m} (x_{n}-y)}{P_{n-m+1} (x_{n}-z)}.
\end{equation}
Since the function $P_{n-m} (z,x_{n})$ is space-time harmonic, the
transition probabilities $q^{*}$ are not the same as those of the
$u-${transformed}  random walk.
\end{remark}

\begin{lemma}\label{lemma:uTrWellDefined}
If $u_{n} (x)>0$ then the $u-${transformed} simple random walk with
endpoint $(n,x)$ is well-defined, and with probability one ends at
$X_{n}=x$.
\end{lemma}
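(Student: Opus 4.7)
The plan is to verify three things in turn: (i) each row of the kernel $q_m(z,\cdot)$ sums to $1$; (ii) whenever the chain can reach state $z$ at time $m<n$ with positive probability, the denominator $\zz{P}u_{n-m}(x-z)$ is strictly positive, so the transition $q_{m+1}(z,\cdot)$ is well-defined; and (iii) the chain necessarily terminates at $X_n = x$. The common thread is the KPP-type recurrence
\[
     u_{k+1}(w) = \zz{P}u_k(w)\bigl(1 - \tfrac{1}{2}\zz{P}u_k(w)\bigr)
\]
from Lemma~\ref{lemma:hittingProbEqn}: since $\zz{P}u_k(w)\in[0,1]$, this identity makes $u_{k+1}(w)$ and $\zz{P}u_k(w)$ vanish simultaneously, and sandwiches one between constant multiples of the other.

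For (i), I would write
\[
    \sum_{y}q_m(z,y) = \frac{1}{\zz{P}u_{n-m}(x-z)}\sum_{y} P_1(y-z)\,u_{n-m}(x-y),
\]
substitute $e = y-z$ in the numerator, and invoke the symmetry $P_1(e)=P_1(-e)$ of the nearest-neighbor increment to identify the sum as $\sum_e P_1(e)\,u_{n-m}(x-z+e) = \zz{P}u_{n-m}(x-z)$, which cancels the denominator.

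For (ii), I would induct on $m$. The base case $m=0$ is immediate: $u_n(x)>0$ by hypothesis, hence by the KPP recurrence $\zz{P}u_{n-1}(x)>0$, and so $q_1(0,\cdot)$ is defined. For the inductive step, any state $z$ attained at time $m<n$ along a positive-probability path arrived via a transition whose numerator $u_{n-m}(x-z)$ had to be positive; applying the recurrence to $w=x-z$ and $k=n-m-1$ then yields $\zz{P}u_{n-m-1}(x-z)>0$, so $q_{m+1}(z,\cdot)$ is a bona fide probability kernel.

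For (iii), observe that $u_0(x-y) = \indic_{\{y=x\}}$, so $q_n(z,\cdot)$ charges only $y=x$. A direct computation gives $\zz{P}u_0(x-z) = P_1(z-x) = P_1(x-z)$, which is positive at every state $z$ reachable at time $n-1$ by the positivity argument in (ii) applied to $u_1(x-z)$; hence $q_n(z,x) = P_1(x-z)/P_1(x-z) = 1$ and $X_n = x$ almost surely. The only mildly delicate point is the bookkeeping that ties the positivity of a transition numerator at step $m-1$ to the positivity of the denominator at step $m$, and this work is carried entirely by the KPP recurrence of Lemma~\ref{lemma:hittingProbEqn}.
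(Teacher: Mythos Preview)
Your proof is correct and follows essentially the same approach as the paper's: both arguments use the KPP recurrence of Lemma~\ref{lemma:hittingProbEqn} to propagate positivity of $u_{n-m}(x-z)$ (hence of the denominator $\zz{P}u_{n-m-1}(x-z)$) along any positive-probability path, and both conclude $X_n=x$ from $u_0=\delta_0$. Your version is a bit more explicit, in that you separately verify the normalization $\sum_y q_m(z,y)=1$ via the symmetry of $P_1$, whereas the paper leaves this implicit.
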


\begin{proof}
What must be shown is that the Markov chain with transition
probabilities \eqref{eq:uTransformTPs} will visit no states
$(m,z)$ at which the denominator $\zz{P}u_{n-m} (x-z)$ is zero.
This is accomplished by noting that as long as $X_{m-1}$ is at a
site $z$ such that $u_{n-m+1} (x-z)>0$, then by
Lemma~\ref{lemma:hittingProbEqn} the denominator $\zz{P}u_{n-m}
(x-z)>0$, and so there is at least one site $y$ among the nearest
neighbors of $z$ such that $u_{n-m} (x-y)>0$. By
\eqref{eq:uTransformTPs}, the next state $X_{m}$ will then be
chosen from among the nearest neighbors such that $u_{n-m}
(x-y)>0$. This proves that the Markov chain is well-defined.  The
path ends at $X_{n}=x$ because $0$ is the only site at which
$u_{0}>0$.
\end{proof}

Our representation of the conditional distribution of the random
variable $U_{n} (x)$ given the event $G_{n,x}$  requires four mutually
independent sequences of random variables:
\begin{itemize}
 \item [($U_{a}$)] $\{X_{m} \}_{0\leq m\leq n}$ is a $u-${transformed}
simple random walk with endpoint $(n,x)$;
 \item [($U_{b}$)] $\{B_{m}
(w) \}_{0\leq m<n; \,w\in \zz{Z}^{d}}$ are independent
Bernoulli$(\beta_{m} (w))$ random variables;
 \item [($U_{c}$)]
$\{U^{i}_{m} (y) \}_{i\geq 0}$ are independent copies of the branching
random walk $\{U_{m} (y) \}$; and
 \item [($U_{d}$)] $\{\xi_{i}
\}_{i\geq 0}$ are independent and uniformly distributed on
$\mathcal{N}$.
\end{itemize}
The Bernoulli parameters are
\begin{equation}\label{eq:UBernouliiPars}
    \beta_{m} (w) =\frac{1}{2-\zz{P}u_{n-m-1} (x-w)};
\end{equation}
note that for large values of $n-m$ the parameters $\beta_{m} (w)$ are
uniformly close to $1/2$, because $u_{n-m} (x-w)$ is bounded by the
probability that the branching random walk will survive for $n-m$
generations.

\begin{prop}\label{proposition:URepresentation}
Assume that the offspring distribution is double-or-nothing, and let
$x$ be a site for which $u_{n} (x)>0$. Then
\begin{equation}\label{eq:URepresentation}
\aligned
    &\mathcal{L}\left( U_{n} (x)\,\bigg| \, U_{n} (x)\geq 1\right)\\
     =& \mathcal{L} \left(1+ \sum_{m=0}^{n-1} B_{m} (X_{m}) U^{m}_{n-m-1}
          (x-X_{m}-\xi_{m+1})\right).
\endaligned
\end{equation}
\end{prop}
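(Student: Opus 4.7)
The plan is to prove~\eqref{eq:URepresentation} by induction on $n$, exploiting a recursive structure of the representation together with a short probability-generating-function identity (rather than trying to construct a direct coupling, which, as I explain in the final paragraph, hides a nontrivial cancellation). The base case $n=1$ is immediate: both $U_1(x)\mid U_1(x)\ge 1$ and the right-hand side $1+B_0(0)\,U^0_0(x-\xi_1)=1+B_0(0)\,\indic_{\{\xi_1=x\}}$ are supported on $\{1,2\}$ with mass at $2$ equal to $P_1(x)/(2-P_1(x))$.

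For the inductive step I would write $V_n(x)$ for the right-hand side of~\eqref{eq:URepresentation}, separate off the $m=0$ summand, and re-index the remainder via $\widehat X_k := X_{k+1}-X_1$, $\widehat\xi_{k+1}:=\xi_{k+2}$, $\widehat B_k(\cdot):=B_{k+1}(\cdot+X_1)$, $\widehat U^k:=U^{k+1}$. A direct check of~\eqref{eq:uTransformTPs} shows that, conditional on $X_1$, the process $(\widehat X_k)_{0\le k\le n-1}$ is a $u$-transformed simple random walk with endpoint $(n-1,\,x-X_1)$, and that $\widehat\beta_k$ matches the Bernoulli parameter prescribed by~\eqref{eq:UBernouliiPars} for the representation of $V_{n-1}(x-X_1)$. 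Invoking the inductive hypothesis then yields
\begin{equation}\label{eq:V-recursion}
V_n(x)\ \stackrel{d}{=}\ B_0(0)\,U^0_{n-1}(x-\xi_1)+\widetilde V_{n-1}(x-X_1),
\end{equation}
where $\widetilde V_{n-1}(y)$ has the conditional law $\mathcal{L}(U_{n-1}(y)\mid U_{n-1}(y)\ge 1)$, the starting step $X_1$ is distributed as $q_1(0,\cdot)=P_1(\cdot)u_{n-1}(x-\cdot)/\zz{P}u_{n-1}(x)$, and all components on the right-hand side are independent.

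To close the induction I would pass to generating functions. Set $\phi_n(x;\theta):=E\theta^{U_n(x)}$, $\psi_n(x;\theta):=\phi_n(x;\theta)-(1-u_n(x))=E[\theta^{U_n(x)}\indic_{\{U_n(x)\ge 1\}}]$, and $\phi^V_n(x;\theta):=E\theta^{V_n(x)}$; the target identity is $u_n(x)\,\phi^V_n(x;\theta)=\psi_n(x;\theta)$. Conditioning on the first generation of the branching random walk yields the Riccati-type recursion $\phi_n=\tfrac12\bigl(1+(\zz{P}\phi_{n-1})^2\bigr)$, which with the abbreviations $a:=\zz{P}u_{n-1}(x)$ and $b:=\zz{P}\psi_{n-1}(x;\theta)$ rearranges to $\psi_n(x;\theta)=\tfrac12 b(2-2a+b)$. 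On the other hand, \eqref{eq:V-recursion} together with $\beta_0(0)=1/(2-a)$ and the inductive hypothesis (which implies $E\theta^{\widetilde V_{n-1}(x-X_1)}=\zz{P}\psi_{n-1}(x;\theta)/\zz{P}u_{n-1}(x)=b/a$) gives $\phi^V_n(x;\theta)=\tfrac{2-2a+b}{2-a}\cdot\tfrac{b}{a}$. Since $u_n(x)=a(2-a)/2$ by Lemma~\ref{lemma:hittingProbEqn}, the product $u_n(x)\,\phi^V_n(x;\theta)$ simplifies to exactly $\tfrac12 b(2-2a+b)=\psi_n(x;\theta)$, completing the induction.

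The main obstacle, and the reason for routing through generating functions, lies in the naive spinal-decomposition picture. If one tries to match, at each generation $m$, the sibling of the traced ancestor with the $m$-th summand of~\eqref{eq:URepresentation} at the joint level of sibling position and contribution, the true conditional distribution of the sibling's position given the ancestor's is \emph{not} uniform on $\mathcal{N}$ but biased by a factor proportional to $1-\tfrac12 u_{n-m-1}(x-X_m-\cdot)$: among offspring pairs in which at least one has a descendant at $(n,x)$, the sibling sits preferentially at sites \emph{less} likely to reach $(n,x)$. The representation compensates by pairing a \emph{uniform} $\xi_{m+1}$ with a Bernoulli factor whose parameter $\beta_m(X_m)$ depends only on $X_m$; averaging over $\xi_{m+1}$ recovers the correct marginal only after a cancellation that the pgf calculation above captures in condensed form.
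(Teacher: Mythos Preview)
Your proof is correct, and it shares the same inductive skeleton as the paper's argument: both proceed by induction on $n$, both isolate the $m=0$ term and recognize that the remaining sum, after the shift $\widehat X_k=X_{k+1}-X_1$, is exactly the representation at level $n-1$ with target $x-X_1$. The difference lies in how the induction is closed. The paper argues probabilistically: conditioning on $\{U_n(x)\ge 1\}$, it identifies a ``traced'' particle $\eta$ among the two first-generation offspring (one guaranteed to reach $(n,x)$) and a sibling $\tau$, shows that their contributions are conditionally independent, and then observes that the sibling's contribution has \emph{exactly} the law of $B_0(0)\,U^0_{n-1}(x-\xi_1)$ with $\xi_1$ uniform --- this is the equivalence of the paper's steps (2) and (2$'$). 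Your route replaces that probabilistic matching by the generating-function identity $u_n\phi^V_n=\psi_n$, which amounts to the same cancellation in algebraic dress.

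Your closing paragraph is well taken but slightly pessimistic. It is true that, conditionally on the traced particle's first step, the sibling's \emph{position} is not uniform on $\mathcal{N}$; however, the paper never needs that joint law. What it uses is only the \emph{marginal} law of the sibling's contribution $W_\tau$ to $U_n(x)$, and a two-line computation (mixing a point mass at $0$ with the hit-conditioned law $\mu$ in proportions $2(1-p)/(2-p)$ and $p/(2-p)$) shows this marginal coincides with that of $B_0(0)\,U^0_{n-1}(x-\xi_1)$. So the ``nontrivial cancellation'' you route through pgf's is, in the paper's hands, a short probabilistic identity; either way the argument goes through.
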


\begin{proof}
The assertion~\eqref{eq:URepresentation} is equivalent to the
assertion (Claim~\ref{claim:uRep} below) that the conditional
distribution can be simulated by the following \emph{Method A}: (1)
Let a particle $\zeta$ execute a $u-${transformed} simple random walk
$\{X_{m} \}_{m\leq n}$ with endpoint $(n,x)$. (2) At each location
$(m,X_{m})$, where $0\leq m<n$, toss a $\beta_{m} (X_{m})-$coin to
determine whether or not to attach a descendant branching random
walk. (3) On the event that the coin toss is a Head, create a new
particle $\zeta_{m}$, let it make one jump $\xi_{m+1}$ to a
neighboring site, and then attach an independent branching random walk
starting from this new location. (4) Count the total number of
particles, including $\zeta$, that land at site $x$ at time~$n$.

\begin{claim}\label{claim:uRep}
This simulates the conditional distribution of the total number of
particles at site $x$ in generation~$n$ given the event $\{U_{n}
(x)\geq 1\}$.
\end{claim}

This claim is proved by induction on $n$. The case $n=1$ is
routine, but for the reader's convenience we shall present the
argument in detail.  First, the only sites $x$ such that $u_{1}
(x)>0$ are the nearest neighbors of the origin, so we assume that
$x$ is one of these five points.  Since $u_{0}=\delta_{0}$ is the
Kronecker delta function, $\zz{P}u_{0} (x)=1/5$, and so $\beta_{0}
(0)=1/ (2-1/5)=5/9$.  Now consider the first generation
$\mathcal{Z}_{1}$ of the branching random walk: this will be empty
unless the initial particle fissions, in which case the two
offspring are located at randomly chosen nearest neighbors of the
origin.  Consequently, the unconditional distribution of $U_{1}
(x)$ is
\begin{align*}
    P\{ U_{1} (x) =0\}&=\frac{1}{2} +\frac{1}{2} \times \frac{4}{5}\times \frac{4}{5};\\
    P\{ U_{1} (x) =1\}&=\frac{1}{2} \times 2\times \frac{4}{5}\times \frac{1}{5};\\
    P\{ U_{1} (x) =2\}&=\frac{1}{2} \times \frac{1}{5}\times \frac{1}{5}.
\end{align*}
It follows that the \emph{conditional} distribution of $U_{1} (x)$
given the event $\{U_{1} (x) > 0\}$ is that of $1$ plus a
Bernoulli($1/9$) random variable. This coincides with the
distribution of the random variable produced by Method A, because
$B_{0} (0)=~1$ with probability $\beta_{0} (0)= 5/9$, and on this
event the particle jumps to $x$ with probability $1/5$, leaving a
second particle at $x$.

Next, consider the branching random walk conditioned to have at least
one particle at site $x$ in generation $n\geq 2$. The first generation
must consist of two particles, at least one of which produces a
descendant branching random walk that places particles at $x$ in its
$(n-1)$st generation. Conditional on the event that two particles are
produced by the initial particle (that is, the event $\{Z_{1}=2 \}$),
each will have chance $p:=\zz{P}u_{n-1} (x)$ of producing a
descendant at site $x$ in generation $n$; consequently, the
conditional probability that \emph{both} particles will do so, given
that \emph{at least one} does, is
\[
    \frac{p^{2}}{p^{2}+ 2p(1-p)}=p\beta_{n-1} (0).
\]
Moreover, given that either one of the particles  produces a
particle at site~$x$ in generation $n$, the conditional probability
that its first jump is to site $y\in \mathcal{N}$ is
\begin{equation}\label{eq:utrStep1}
    P_{1} (y)\frac{u_{n-1} (x-y)}{\zz{P}u_{n-1} (x)};
\end{equation}
this is the distribution of the first step of a $u-${transformed}
random walk with endpoint $(n,x)$. Thus, a version of the random
variable $U_{n} (x)$, conditional on $\{U_{n} (x) \geq 1\}$, can be
produced by the following two-step procedure:

\medskip
(1) Place a particle
$\eta$ at a randomly chosen neighbor $y$ of $0$ according to the
distribution~\eqref{eq:utrStep1}, and attach to it a branching random
walk conditioned to produce at least one descendant at site $x-y$ in
its $(n-1)$st generation. By the induction hypothesis,
the contribution of offspring of $\eta$ to site $x$ in generation $n$
will be
\begin{equation}\label{eq:induction1}
    1+\sum_{m=1}^{n-1} B_{m} (X_{m}) U^{m}_{n-m-1}
          (x-X_{m}-\xi_{m+1}).
\end{equation}

 (2) With probability $p\beta_{n-1} (0)$, do the same with a second
particle $\tau$. Observe that, conditional on the event that this
second particle $\tau$ is attached, the contribution to site $x$ in
the $n$th generation will have distribution
\[
    \mathcal{L}\left(U_{n-1}(x-X_1)\,|\, U_{n-1}(x-X_1)>0\right).
\]
Since the particle $\tau$ is attached with probability $p\beta_{n-1}
(0)$, where $p$ is the probability that a particle
born at time $0$ will put a descendant at site $x$ in generation $n$,
step (2) has the same effect as this alternative: (2') With
probability $\beta_{n-1} (0)$, place a second particle~$\tau$ at a
randomly chosen (that is, uniformly distributed) neighbor $y$ of $0$,
and attach an independent copy of the branching random walk.  This,
together with the representation \eqref{eq:induction1} of the number
of offspring of $\eta$ at $x$ in generation $n$, shows that the total
number of particles at $x$ in generation $n$ will be
\begin{equation}\label{eq:induction2}
    1+\sum_{m=0}^{n-1} B_{m} (X_{m}) U^{m}_{n-m-1}
          (x-X_{m}-\xi_{m+1}),
\end{equation}
as desired. This completes the induction argument, and thus proves
\eqref{eq:URepresentation}.
\end{proof}

\subsection{Proof of
Lemma~\ref{lemma:super_sol}}\label{ssec:super-solution}
Let $x= (x_{1},x_{2})$; then
$$\zz{P}v_n(x)
   = v_n(x)\cdot e^{-\beta_n/(2n)}\cdot
   \frac{1}{5} w_{n} (x)
$$
where
\[
   w_n(x) = \left( e^{\beta_n /(2n)}+e^{-\beta_n x_1/n} + e^{\beta_n x_1/n} +e^{-\beta_n x_2/n}+e^{\beta_n
   x_2/n}\right).
\]
Then
\begin{multline*}
v_{n+1}(x) - \zz{P}v_n(x) +\frac{1}{2}(\zz{P}v_n(x))^2=\\
   v_n(x)e^{-\beta_n/(2n)}\frac{1}{5}\left(5e^{\beta_n/(2n)}\frac{n\log
       n}{(n+1)\log (n+1)}\exp\left(\frac{\theta_n|x|^2}{2}\right) - w_n(x)\right.\\
       \hskip3cm\left. + \frac{e^{-\beta_n/(2n)}}{10} v_n(x) w_n(x)^2
        \right),
\end{multline*}
where
\[
   \theta_n = \frac{\beta_n}{n}-\frac{\beta_{n+1}}{n+1}.
\]
Therefore it suffices to show that there exist $N_0$ and
$\kappa_0$ independent of $N_0$ such that for all $\kappa\geq
\kappa_0$ and for all $n\geq N_0$, the following holds:
\begin{equation}\label{eq:v_indc_2}
\aligned
       &5e^{\beta_n/(2n)}\frac{n\log n}{(n+1)\log (n+1)}\exp\left(\frac{\theta_n|x|^2}{2}\right) - w_n(x)\\
       &\quad + \frac{e^{-\beta_n/(2n)}}{10} v_n(x) w_n(x)^2
       \geq 0.
\endaligned
\end{equation}

\smallskip\noindent\textbf{a. Estimate of $e^{\beta_n/(2n)}\frac{n\log
       n}{(n+1)\log (n+1)}$}: First,
\[\aligned
    \frac{n\log n}{(n+1)\log (n+1)}
    &=1-\frac{(n+1)\log (n+1) - n\log n}{(n+1)\log (n+1)}\\
    &=1- \frac{1}{n+1}-\frac{n\log(1+1/n)}{(n+1)\log (n+1)}\\
    &= 1-\frac{1}{n+1}-\frac{1}{(n+1)\log (n+1)}+o\left(\frac{1}{n^2}\right)\\
    &= 1-\frac{1}{n}-\frac{1}{n\log n}+ \frac{1}{n^2}+o\left(\frac{1}{n^2}\right).
\endaligned
\]
Therefore, recall that $\beta_n = \beta(1-1/\log n)$,
\[\aligned
  &e^{\beta_n /(2n)} \frac{n\log n}{(n+1)\log (n+1)}\\
  = &\left(1 + \frac{\beta}{2n}-\frac{\beta}{2n\log n}+\frac{\beta_n^2}{8n^2} + O\left(\frac{1}{n^3}\right)\right)
   \cdot\left( 1-\frac{1}{n}-\frac{1}{n\log n}+ \frac{1}{n^2}+o\left(\frac{1}{n^2}\right)\right)\\
  = &1+ \frac{(\beta - 2)}{2n} -\frac{\beta+2}{2n\log
  n}+ \frac{\beta_n^2 - 4\beta+8}{8n^2}+o\left(\frac{1}{n^2}\right).
\endaligned
\]
Since $\beta_n\to \beta=5/2$, there exists $N_0\in\zz{N}$ such
that for all $n\geq N_0$,
\begin{equation}\label{eq:coef}
  e^{\beta_n /(2n)} \frac{n\log n}{(n+1)\log (n+1)}\geq 1+ \frac{(\beta - 2)}{2n} -\frac{\beta+2}{2n\log
  n}+ \frac{2}{8n^2}\geq 1.
\end{equation}

\smallskip\noindent\textbf{b. Estimate of $\theta_n$}:
Since $\beta_n = \beta(1 - 1/\log n)$, we have
\[\aligned
   \theta_n& = \beta\left(\frac{1-1/\log n}{n} -
   \frac{1-1/\log(n+1)}{n+1}\right)\\
           &=\beta\frac{1-(n+1)/\log n + n/\log(n+1)}{n(n+1)}.
\endaligned
\]
However,
\[\aligned
  \frac{n+1}{\log n} -\frac{n}{\log(n+1)} &= \frac{\log (n+1)+n\log(1+1/n)}{\log n \log
  (n+1)}\\
  &=\frac{1}{\log n} + O\left(\frac{1}{\log n \log (n+1)}\right),
\endaligned
\]
so it follows that
\begin{equation}\label{eq:theta_n}
  \theta_n =\beta\frac{1-1/\log n + O(1/(\log n \log
  (n+1)))}{n(n+1)}.
\end{equation}
\begin{claim} Enlarging $N_0$ if necessary, we have that for all $n\geq
N_0$,
\begin{equation}\label{eq:theta_beta}
   \beta \theta_n - \frac{\beta_n^2}{n^2}\geq \frac{1}{n^2\log n}.
\end{equation}
\end{claim}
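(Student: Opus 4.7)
The plan is a direct asymptotic expansion using the formula (\ref{eq:theta_n}) for $\theta_n$ already derived, together with the definition $\beta_n = \beta(1-1/\log n)$ with $\beta = 5/2$. First I would rewrite
\[
   \beta\theta_n = \frac{\beta^2}{n(n+1)}\left(1 - \frac{1}{\log n} + O\!\left(\frac{1}{\log^2 n}\right)\right),
\]
and second expand
\[
   \frac{\beta_n^2}{n^2} = \frac{\beta^2}{n^2}\left(1 - \frac{2}{\log n} + \frac{1}{\log^2 n}\right).
\]

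The crucial algebraic identity is $\frac{1}{n(n+1)} - \frac{1}{n^2} = -\frac{1}{n^2(n+1)}$, which is $O(1/n^3)$ and therefore negligible relative to the target $1/(n^2\log n)$. Subtracting the two expansions term-by-term, the $\beta^2/n^2$ pieces cancel, and the $1/\log n$ pieces combine to
\[
   \frac{\beta^2}{\log n}\left(\frac{2}{n^2} - \frac{1}{n(n+1)}\right)
   = \frac{\beta^2(n+2)}{n^2(n+1)\log n} = \frac{\beta^2}{n^2\log n}(1+o(1)),
\]
while all remaining contributions are $O(1/(n^2\log^2 n)) + O(1/n^3)$. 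Putting these together yields
\[
   \beta\theta_n - \frac{\beta_n^2}{n^2}
   = \frac{\beta^2}{n^2\log n} + O\!\left(\frac{1}{n^2\log^2 n}\right) + O\!\left(\frac{1}{n^3}\right).
\]

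Since $\beta^2 = 25/4 > 1$ with generous margin, the leading term dominates the error terms for all sufficiently large $n$, so after enlarging $N_0$ the right side exceeds $1/(n^2\log n)$, which is exactly the claim. No serious obstacle is anticipated here: the proof is essentially bookkeeping of Taylor expansions. The only subtlety worth flagging is the \emph{sign pattern} in the $1/\log n$ terms --- the contribution from $\beta\theta_n$ carries coefficient $-\beta^2$ while that from $\beta_n^2/n^2$ carries coefficient $-2\beta^2$, so that the difference picks up a \emph{positive} coefficient $+\beta^2$ at order $1/(n^2\log n)$. This is precisely why the factor $(1 - 1/\log n)$ was inserted in the definition of $\beta_n$; without it the cancellation would go the wrong way.
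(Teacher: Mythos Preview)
Your proof is correct and follows essentially the same route as the paper: both expand $\beta\theta_n$ via \eqref{eq:theta_n} and $\beta_n^2/n^2$ directly, subtract, and observe that the leading term of the difference is $\beta^2/(n^2\log n)+o(1/(n^2\log n))$, so that $\beta^2=25/4>1$ gives the inequality for large $n$. The paper multiplies through by $n^2$ before expanding, whereas you keep the $1/n^2$ factors explicit, but the computation and the key observation are identical.
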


\begin{proof}
[Proof of the claim]
Since $\beta_n = \beta(1-1/\log n)$,
\[\aligned
   &n^2\cdot \left(\beta \theta_n - \frac{\beta_n^2}{n^2}\right)\\
   =&\beta^2\left\{\left(1-\frac{1}{n+1}\right)\left(1-\frac{1}{\log n} + O\left(\frac{1}{\log n \log
  (n+1)}\right)\right)\right.\\
   &\hskip3cm\left. - \left(1 - \frac{2}{\log n} + \frac{1}{(\log
  n)^2}\right)\right\}\\
  =&\beta^2 \left(\frac{1}{\log n} + o\left(\frac{1}{\log n}\right)
  \right).
\endaligned
\]
The relation \eqref{eq:theta_beta} follows since $\beta = 5/2>1$.
\end{proof}

\smallskip\noindent\textbf{c. Proof of \eqref{eq:v_indc_2} for $|x|\geq
3n$}: \eqref{eq:theta_beta} implies, enlarging $N_0$ if necessary,
that for all $n\geq N_0$, $\theta_n  \geq 2/n^2$. Hence when
$|x|\geq 3n$,
\[
   \theta_n |x|^2/2 \geq \beta_n |x_i|/n,\quad i=1,2,
\]
and
\[
  5\exp\left(\frac{\theta_n|x|^2}{2}\right)\geq w_n(x).
\]
The relation \eqref{eq:v_indc_2} follows by noting \eqref{eq:coef}.

\smallskip\noindent\textbf{d. Estimate of $w_n(x)$}:
For $|x|/n$  sufficiently small,  Taylor expansion yields
\begin{equation}\label{eq:w_n}\aligned
  w_n(x)=&e^{\beta_n /(2n)}+(e^{-\beta_n x_1/n} + e^{\beta_n x_1/n}) + (e^{-\beta_n
   x_2/n}+e^{\beta_n x_2/n})\\
  = &1 + \frac{\beta}{2n} -\frac{\beta}{2n\log n} + \frac{\beta_n^2}{8n^2}+O\left(\frac{1}{n^3}\right)\\
   &+2 + \frac{\beta_n^2 x_1^2}{n^2} + \frac{\beta_n^4 x_1^4}{12 n^4} + O\left(\left(\frac{x_1}{n}\right)^6\right) \\
  & + 2 +\frac{\beta_n^2 x_2^2}{n^2} + \frac{\beta_n^4 x_2^4}{12 n^4} +
  O\left(\left(\frac{x_2}{n}\right)^6\right)\\
  = &5 +\left[  \frac{\beta}{2n}-\frac{\beta}{2n\log
  n}\right]+\left[\frac{\beta_n^2}{8n^2}+O\left(\frac{1}{n^3}\right)\right]\\
   &\quad+ \frac{\beta_n^2 |x|^2}{n^2} + \left[\frac{\beta_n^4 (x_1^4+ x_2^4)}{12 n^4}
  +  O\left(\frac{|x|^6}{n^6}\right)\right],
\endaligned
\end{equation}

\smallskip\noindent\textbf{e. Estimate of $e^{\beta_n /(2n)}\cdot \frac{n\log n}{(n+1)\log (n+1)}\cdot\exp(\theta_n
   |x|^2/2)$}: By \eqref{eq:coef}, for all $n\geq N_0$,
\begin{equation}\label{eq:fist_term}\aligned
   & e^{\beta_n /(2n)}\cdot \frac{n\log n}{(n+1)\log (n+1)}\cdot\exp\left(\frac{\theta_n
   |x|^2}{2}\right)\\
  \geq & \left(1+ \frac{\beta - 2}{2n} -\frac{\beta+2}{2n\log
  n}+ \frac{2}{8n^2}\right)\cdot \left( 1 +\frac{\theta_n
   |x|^2}{2} + \frac{\theta_n^2
   |x|^4}{8}\right)\\
  \geq &1 +  \left[\frac{\beta - 2}{2n} -\frac{\beta+2}{2n\log
  n}\right]+ \frac{2}{8n^2}+\frac{\theta_n
   |x|^2}{2} + \frac{  \theta_n^2
   |x|^4}{8}.
\endaligned
\end{equation}

\smallskip \noindent \textbf{f: Their difference}: By
\eqref{eq:fist_term} and \eqref{eq:w_n},
\begin{equation}\label{eq:diff}
\aligned
   &5e^{\beta_n /(2n)}\cdot \frac{n\log n}{(n+1)\log (n+1)}\cdot\exp\left(\frac{\theta_n
   |x|^2}{2}\right) - w_n(x)\\
   \geq& 5 \left[\frac{\beta - 2}{2n} -\frac{\beta+2}{2n\log
  n}\right] - \left[\frac{\beta}{2n}-\frac{\beta}{2n\log
  n}\right]\\
  \quad &+\frac{10}{8n^2}-  \frac{\beta_n^2}{8n^2} +O\left(\frac{1}{n^3}\right)\\
  \quad & + \left(\beta\theta_n
   - \frac{\beta_n^2}{n^2}\right) |x|^2 \\
   \quad &+ \frac{5  \theta_n^2
   |x|^4}{8}-  \frac{\beta_n^4 (x_1^4+ x_2^4)}{12 n^4} +
  O\left(\frac{|x|^6}{n^6}\right).
\endaligned
\end{equation}
Since $\beta = 5/2$,
\begin{equation}\label{eq:diff_1st}
   5 \left(\frac{\beta - 2}{2n} -\frac{\beta+2}{2n\log
  n}\right) - \left(\frac{\beta}{2n}-\frac{\beta}{2n\log
  n}\right)=-\frac{10}{n\log n},
\end{equation}
and enlarging $N_0$ if necessary we can assume that for all $n\geq
N_0$,
\begin{equation}\label{eq:diff_2nd}
  \frac{10}{8n^2}-  \frac{\beta_n^2}{8n^2}+O\left(\frac{1}{n^3}\right)\geq 0.
\end{equation}
Moreover, $\theta_n\sim \beta/n^2$, it follows that for all $n$
sufficiently large,
\begin{equation}\label{eq:4_moment}
   \frac{5  \theta_n^2
   |x|^4}{8}-  \frac{\beta_n^4 (x_1^4+ x_2^4)}{12 n^4}\geq \left(\frac{5  \theta_n^2
   }{8} - \frac{\beta_n^4 }{12 n^4}\right)\cdot |x|^4>\frac{|x|^4}{2n^4}.
\end{equation}

\smallskip\noindent\textbf{g. Proof of \eqref{eq:v_indc_2} for $\delta
n\geq |x|>\sqrt{10 n}$, where $\delta>0$ is sufficiently small}:
By \eqref{eq:theta_beta}, when $|x|>\sqrt{10 n}$,
\[
   \left(\beta\theta_n
    - \frac{\beta_n^2}{n^2}\right) |x|^2
   \geq \frac{10}{n\log n}.
\]
Hence, by \eqref{eq:diff}, \eqref{eq:diff_1st}, \eqref{eq:diff_2nd}
and \eqref{eq:4_moment}, the relation \eqref{eq:v_indc_2} holds for
$x$ such that $|x|>\sqrt{10 n}$ and $|x|/n$ sufficiently small.

\smallskip\noindent\textbf{h. Proof of \eqref{eq:v_indc_2} for $3n\geq |x|\geq \delta
n$:} By \eqref{eq:theta_beta}, for all $n\geq N_0$,
\[
  \theta_n \geq \frac{\beta_n^2}{n^2\beta} - \frac{1}{\beta n^2\log
  n}.
\]
Hence when $|x|\leq 3n$,
\[
  \exp\left(\frac{\theta_n|x|^2}{2}\right)
  \geq \frac{\exp\left(\frac{\beta_n^2|x|^2}{5n^2}\right)}{\exp\left(\frac{|x|^2}{5n^2\log n}\right)}
  \geq \exp\left(-\frac{2}{\log
  n}\right)\cdot\exp\left(\frac{\beta_n^2|x|^2}{5n^2}\right).
\]
By \eqref{eq:coef}, to show \eqref{eq:v_indc_2} it is sufficient
to show that for all $n$ sufficiently large,
\[\aligned
   &\left( e^{\beta_n /(2n)}+e^{-\beta_n x_1/n} + e^{\beta_n x_1/n} +e^{-\beta_n x_2/n}+e^{\beta_n
   x_2/n}\right)\\
   \leq&\, 5\exp\left(-\frac{2}{\log
  n}\right)\cdot\exp\left(\frac{\beta_n^2|x|^2}{5n^2}\right).
\endaligned
\]
Since $|x|\leq 3n$,
\[\aligned
  &\left(1-\exp\left(-\frac{2}{\log
  n}\right)\right)\cdot \exp\left(\frac{\beta_n^2|x|^2}{5n^2}\right)\\
  \leq& \left(1-\exp\left(-\frac{2}{\log
  n}\right)\right) \exp\left(\frac{9\beta_n^2}{5}\right) = o(1).
\endaligned
\]
Hence it  suffices to show that
\begin{equation}\label{eq:v_aty_mid}
\aligned
   &\liminf_n \inf_{3n\geq |x|\geq \delta n}
   \left\{ 5\exp\left(\frac{\beta_n^2|x|^2}{5n^2}\right) \right.\\
    &\hskip1cm\left.- \left(
1+e^{-\beta_n x_1/n} + e^{\beta_n x_1/n} +e^{-\beta_n
x_2/n}+e^{\beta_n
   x_2/n}\right) \right\} >0.
\endaligned
\end{equation}
By elementary calculus,
\[
   e^{-\beta_n x_1/n} + e^{\beta_n x_1/n} +e^{-\beta_n x_2/n}+e^{\beta_n
   x_2/n}
   \leq 2 + e^{-\beta_n |x|/n} + e^{\beta_n |x|/n},
\]
thus
\[\aligned
   &5\exp\left(\frac{\beta_n^2|x|^2}{5n^2}\right) - \left( 1+e^{-\beta_n x_1/n} + e^{\beta_n x_1/n} +e^{-\beta_n
     x_2/n}+e^{\beta_n    x_2/n}\right)\\
   \geq & 5\exp\left(\frac{\beta_n^2|x|^2}{5n^2}\right) - 3 - e^{-\beta_n |x|/n} - e^{\beta_n
   |x|/n}.
\endaligned
\]
Relation \eqref{eq:v_aty_mid} now follows from the simple fact that
\[
   f(x):= 5 e^{x^2/5} - 3 - e^x - e^{-x}
\]
is strictly increasing for $x\geq 0$ and equals 0 only when $x=0$.

\smallskip\noindent\textbf{i. Proof of \eqref{eq:v_indc_2} when
$|x|\leq\sqrt{10 n}$}: Since $|x|\leq\sqrt{10 n}=o(n)$, by
relations \eqref{eq:diff}, \eqref{eq:diff_1st},
\eqref{eq:diff_2nd}, \eqref{eq:theta_beta} and
\eqref{eq:4_moment},  we need only show that there exists
$\kappa_0$ such that if $\kappa\geq \kappa_0$ and  $n\geq N_0$,
then
\[
  \frac{e^{-\beta_n/(2n)}}{10} v_n(x) w_n(x)^2
  \geq \frac{10}{n\log n}.
\]
 Since $w_n(x)\geq 5$, when $|x|\leq\sqrt{10 n}$,
\[
   \frac{e^{-\beta_n/(2n)}}{10} v_n(x) w_n(x)^2\geq \frac{25\kappa}{10n\log n} \exp(-6\beta),
\]
so $\kappa_0$ can be chosen as $4\exp(6\beta)$, which is
independent of $N_0$.

\qed

\section*{Acknowledgments}

We thank Lenya Ryzhik for a helpful suggestion regarding the proof of
Proposition \ref{prop:mgf} and Andrej Zlatos for suggesting the form
of the super-solution \eqref{eq:v_n}. We also thank Michael Wichura
for carefully reading an earlier draft and pointing out a mistake in
our original proof of Lemma \ref{lemma:um0}. Finally, we are grateful
to a referee for some valuable suggestions regarding the exposition
and for alerting us to the work of \Revesz.

\end{document}